\newtheorem{thm}{Theorem}[section]
\newtheorem{lemma}[thm]{Lemma}
\newtheorem{cor}[thm]{Corollary}
\newtheorem{prop}[thm]{Proposition}
\newtheorem{defi}[thm]{Definition}
\newtheorem{notation}[thm]{Notation}
\theoremstyle{definition}
\newtheorem{rem}[thm]{Remark}
\def\CS{{\mathcal{S}}}
\def\bZ{{\mathbb Z}}
\def\bF{{\mathbb F}}
\def\bK{{\mathbb K}}
\def\bV{{\mathbb V}}
\def\Id{\operatorname{Id}\nolimits}
\def\res{\operatorname{Res}\nolimits}
\def\Hom{\operatorname{Hom}\nolimits}
\def\End{\operatorname{End}\nolimits}
\def\proj{\operatorname{(proj)}\nolimits}
\def\Det{\operatorname{Det}\nolimits}
\def\stmod{\operatorname{stmod}\nolimits}
\def\SL{\operatorname{SL}\nolimits}
\def\GL{\operatorname{GL}\nolimits}
\def\SP#1#2{\operatorname{Sp}\nolimits}
\def\bV{\mathbb V}
\def\bZ{{\mathbb Z}}
\def\bfq{{\mathbb F}_q}
\def\whg{{\widehat G}}
\def\whn{{\widehat N}}
\def\whl{{\widehat L}}
\def\whh{{\widehat H}}
\def\whs{{\widehat S}}
\def\whq{{\widehat Q}}
\def\wtl{{\widetilde L}}
\def\wts{{\widetilde S}}
\def\CL{\mathcal L}
\def\whcl{{\widehat \CL}}
\def\gcd{\operatorname{gcd}}
\newcommand{\ls}[2]{{}^{{#1}}\!{{#2}}}
\title[Endotrivial modules for $\GL_n$]{Endotrivial modules 
for the general linear group in a nondefining 
characteristic}
\author{\sc Jon F. Carlson}
\address
{Department of Mathematics\\ University of Georgia \\
Athens\\ GA~30602, USA}
\thanks{Research of the first author was supported in part by NSF
grant DMS-1001102}
\email{jfc@math.uga.edu}
\author{\sc Nadia Mazza}
\address
{Department of Mathematics\\ University of Lancaster}
\email{nmazza@math.uga.edu}
\author{\sc Daniel K. Nakano}
\address
{Department of Mathematics\\ University of Georgia \\
Athens\\ GA~30602, USA}
\thanks{Research of the third author was supported in part by NSF
grant DMS-1002135}
\email{nakano@math.uga.edu}
\date{\today}
\begin{document}
\maketitle

\begin{abstract}
Suppose that $G$ is a finite group such that 
$\SL(n,q)\subseteq G \subseteq \GL(n,q)$, and
that $Z$ is a central subgroup of $G$. 
Let $T(G/Z)$ be the abelian group of 
equivalence classes of endotrivial $k(G/Z)$-modules, where $k$ is an 
algebraically closed field of characteristic~$p$ not dividing $q$. 
We show that the torsion free rank of $T(G/Z)$ is at most
one, and we
determine $T(G/Z)$ in the case that the Sylow $p$-subgroup of $G$ is
abelian and nontrivial. 
The proofs for the torsion subgroup of $T(G/Z)$ use the  
theory of Young modules for $\GL(n,q)$ and a new method due to 
Balmer for computing the kernel of restrictions in the group
of endotrivial modules. 
\end{abstract}

\section{Introduction}

For a finite group $G$, endotrivial modules form an 
important part of the Picard group of 
self-equivalences of the stable module category, 
$\stmod(kG)$, for the group algebra $kG$. 
In particular, the functor of tensoring with an 
endotrivial module is a self-equivalence of ``Morita type" 
on the stable category as well as on the derived category. 
The isomorphism classes in the stable category of the endotrivial 
modules over $G$ form a group, $T(G)$, under tensor product.  
The endotrivial group $T(G)$ was 
introduced by Dade \cite{D1, D2} who showed that 
$T(G)$ is cyclic in the case that $G$ is an abelian $p$-group. 
For general $p$-groups a classification of the endotrivial
modules was obtained by the first author and Th\'evenaz 
\cite{CT2, CT3, Ca}, building on work of Alperin \cite{A}
and others. 

The problem of computing $T(G)$ for an arbitrary finite group $G$
remains open. Many results for specific groups can be found 
in \cite{CHM, CMN1, CMN2, CMT, LMS, NR, Rob}.  
It is hoped that a complete solution
to calculating $T(G)$ can be reduced to almost-simple groups. 
The proof in \cite{CMT} hints that such a reduction may hold. 
The fact that $T(G)$ is finitely generated can be derived from
the theory of Green correspondents and the classification of 
endotrivial modules over $p$-groups. 

This paper is part of a series of efforts to classify endotrivial
modules for almost simple groups. 
In \cite{CMN1}, the authors gave a classification 
of the endotrivial modules for the finite
groups of Lie type where $p$ is the defining 
characteristic.  Here, we consider the 
case of finite groups of Lie type in
nondefining characteristics.  We prove that 
the torsion free part of the group of 
endotrivial modules has either rank 
zero or one if the group has underlying root 
system of type $A_{n}$, and we characterize the 
torsion part of the group provided the Sylow 
$p$-subgroup is abelian. The main results are the following.
For notation, let $k$ be an algebraically closed field of 
characteristic $p>0$ with $\gcd(p,q)=1$. 

First we present the case when the Sylow $p$-subgroup is not cyclic,
which is by far more difficult than when the Sylow $p$-subgroup is
cyclic. 

\begin{thm} \label{thm:noncyclic}
Suppose that $\SL(n,q) \subseteq G \subseteq \GL(n,q)$ 
and that $Z \subseteq Z(G)$.
Assume that $p$ divides the order of $G$ 
and that the Sylow $p$-subgroup
of $G$ is abelian but not cyclic. 
Then 
$$T(G/Z) \cong \bZ \oplus X(G/Z),$$ 
where the
torsion free part is generated by the 
class of $\Omega(k)$. Here $X(G/Z)$
is the group of isomorphism classes 
of $k(G/Z)$-modules of dimension one. 
\end{thm}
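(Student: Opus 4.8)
First I would establish the lower bound $T(G/Z) \supseteq \bZ \oplus X(G/Z)$, which is the easy direction. The one-dimensional modules always give a copy of $X(G/Z)$ inside the torsion subgroup (restriction of a one-dimensional module to a Sylow $p$-subgroup is trivial, hence endotrivial), and since the Sylow $p$-subgroup $P$ is abelian and noncyclic it is in particular not cyclic, so $\Omega(k)$ is not torsion in $T(P)$ and hence not torsion in $T(G/Z)$. Thus the class of $\Omega(k)$ generates a copy of $\bZ$, and one checks that $\bZ \cap X(G/Z) = 0$ inside $T(G/Z)$.

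The substantive content is the reverse inclusion, and it splits into two parts. For the torsion free rank, I would invoke the general bound: the torsion free rank of $T(G/Z)$ equals the number of conjugacy classes of connected components of the poset of elementary abelian $p$-subgroups of rank $\ge 2$ (equivalently, by Alperin's result, it can be read off from the structure of $N_{G/Z}(P)$ when $P$ is abelian), and I would argue that for $G$ between $\SL(n,q)$ and $\GL(n,q)$ with $P$ abelian this number is exactly one — i.e., the relevant poset is connected. This is where the hypothesis that the root system is of type $A_n$ enters: one analyzes the abelian Sylow $p$-subgroups of $\GL(n,q)$ (they sit inside a torus, and abelianness forces a strong numerical restriction on $n$ relative to the multiplicative order of $q$ mod $p$), and checks connectedness of the $\ge 2$-rank poset directly from that description. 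Quotienting by the central $Z$ does not change the $p$-local structure away from $p \mid |Z|$ issues, which are handled since we may as well absorb $Z$ into the hypotheses.

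For the torsion subgroup, the goal is to show $T_t(G/Z) = X(G/Z)$, i.e., every torsion endotrivial module is (stably) one-dimensional. The method, as the abstract signals, is Balmer's description of the kernel of the restriction map $T(G/Z) \to T(N_{G/Z}(P))$ via "weak $P$-homomorphisms" / the group $A(G/Z, P)$ of suitable functions on the set of subgroups, together with the classification of Young (and signed Young / $p$-permutation) modules for $\GL(n,q)$ in nondefining characteristic. Concretely: since $P$ is abelian, $N_{G/Z}(P)$ controls fusion, and $T(N_{G/Z}(P))$ is understood (it is $\Omega(k)\bZ \oplus X(N_{G/Z}(P))$ by the known abelian-defect analysis of $T$ for groups with a normal abelian Sylow $p$-subgroup); so the kernel $K$ of restriction is all that must be computed, and Balmer's theorem identifies $K$ with $H^1$ of a sheaf on the orbit category, expressible through the $p$-permutation (Young) module structure. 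The key input is that the relevant Young modules for $\GL(n,q)$ have trivial source and their Green-correspondence/Brauer-quotient behaviour forces the cocycles to be coboundaries, giving $K = 0$ (or $K$ already one-dimensional). I would then assemble: $T(G/Z) \hookrightarrow T(N_{G/Z}(P))$ modulo $K$, both sides have torsion part exactly the one-dimensional modules, and the one-dimensional modules of $G/Z$ and of $N_{G/Z}(P)$ match up because $G/Z$ is generated by $P$-local data together with $[G,G]$.

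The main obstacle is the torsion computation via Balmer's machinery: translating the abstract description of $\ker(\res^{G/Z}_{N})$ into an explicit vanishing statement requires knowing precisely which trivial-source $k(G/Z)$-modules occur and computing their Brauer quotients at the subgroups of $P$, and this is exactly where the representation theory of $\GL(n,q)$ — the parametrization of Young modules, their vertices, and the combinatorics of unipotent blocks with abelian defect — has to be brought in carefully rather than invoked formally. Establishing connectedness of the rank-$\ge 2$ poset (for the rank statement) is comparatively routine once the abelian Sylow structure is pinned down, but it too relies essentially on the type being $A_n$.
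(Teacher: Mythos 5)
Your high-level skeleton is right and matches the paper's architecture: reduce $G/Z$ to $G$ by inflation, reduce $G$ to $\SL(n,q)$ by Clifford theory, compute the torsion-free rank by showing the relevant poset of elementary abelian $p$-subgroups is ``connected'' (the paper does this via Lemma~\ref{lem:GLeleab}, which conjugates every elementary abelian $p$-subgroup into a single standard $E_p$, and then invokes Theorem~\ref{thm:rank}), and kill the torsion using a combination of Balmer's kernel-of-restriction machinery and Young module combinatorics. However, the torsion computation as you describe it contains a genuine misunderstanding of how the two tools fit together. Balmer's theorem (Theorem~\ref{thm:balmer}) identifies $\ker\bigl(T(G)\to T(H)\bigr)$ with a group $A(G,H)$ of \emph{weak $H$-homomorphisms}, which are functions $u\colon G\to k^\times$ subject to three combinatorial conditions on double cosets and $p$-local intersections; there is no Brauer-quotient computation, no sheaf $H^1$ expressed ``through the $p$-permutation module structure,'' and the Young modules play no role whatsoever in the Balmer arguments (Sections~\ref{sec:case-2e} and~\ref{sec:case-re+s}). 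In those sections the vanishing of $A(G,S)$ (or of $A(\SL(n,q),\SL(n-1,q))$) is proved by explicit generation-of-normalizers arguments: Propositions~\ref{prop:balmerapp2}, \ref{prop:balmerapp4}, \ref{prop:rho2}. The Young modules are used in a completely separate, hands-on way (Sections~\ref{sec:abelian1} and~\ref{sec:re}): after showing that a nontrivial torsion endotrivial module would have to be a summand of $k_L^{\uparrow G}$ for a suitable Levi $L$, one decomposes $k_L^{\uparrow G}$ into Young modules via Theorem~\ref{thm:youngbasic} and shows by a Mackey/double-coset count (Lemma~\ref{lem:count1}, Lemma~\ref{lem:re9}) that the candidate summand $Y_{[(r-1)e,e]}$ restricted to a well-chosen small $p$-subgroup has too many trivial summands to be endotrivial.

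A second problem is the assembly step. You propose to show $T(G/Z)\hookrightarrow T(N_{G/Z}(P))$ and then argue the one-dimensional modules on both sides ``match up.'' They do not: for $G=\SL(n,q)$, $X(G)=0$ because $G$ is perfect, while $X(N_G(P))$ is nontrivial (indeed the whole point of the cyclic Theorem~\ref{thm:cyclic2} is that this extra room exists in $T(N)$). The paper avoids this by pushing the restriction all the way to the Sylow $p$-subgroup $S$: since $S$ is abelian noncyclic, $TT(S)=\{0\}$, so $A(G,S)=\{1\}$ kills the torsion outright, with no need to compare character groups. Finally, your plan is silent on the necessary case split ($e=1$ and $2<n<p$; $e>1$ with $n=2e$; $e>1$ with $n=re$, $r\ge 3$; and $n=re+f$ with $0<f<e$), which in the paper drives the choice of method in each regime and is not merely a bookkeeping matter --- the Balmer double-coset analyses and the Young module multiplicity counts look quite different in each.
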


An interesting feature of the proof of this theorem
is the first time utilization of a powerful method 
recently introduced by Balmer \cite{Bal}. If $H$
is a subgroup of $G$ that contains a Sylow $p$-subgroup of $G$, 
then Balmer's method finds the kernel of the restriction
map $T(G) \to T(H)$ in combinatorial terms. Applying the method 
requires detailed information about the $H$-$H$ double cosets in 
$G$ and the normalizers all of the $p$-subgroups of $G$. This 
makes the applications difficult except in some special cases.

For the sake of completion, we present also the case that the 
Sylow $p$-subgroup is cyclic. The proof in this case is accomplished
by straightforward applications of known results.  

\begin{thm} \label{thm:cyclic}
Suppose that $\SL(n,q) \subseteq G \subseteq \GL(n,q)$ and that 
$Z \subseteq Z(G)$. Assume that the Sylow $p$-subgroup $S$ 
of $G$ is cyclic and let $N = N_G(S)$. 
Then $T(G/Z) \cong T(\whn)$ where $\whn = N_{G/Z}(\whs)$ and 
$\whs$ is a Sylow $p$-subgroup of $G/Z$. 
Moreover, $T(\whn)$ is the middle term of a not necessarily split
extension 
\begin{equation} \label{eq:sequence}
\xymatrix{
1 \ar[r] & X(\whn) \ar[r] & T(\whn) \ar[r] & T(\whs) \ar[r] & 0
}
\end{equation} 
where $X(\whn)\cong N/(Z[N,N])$ is the 
group of isomorphism classes of
$k\whn$-modules of dimension one.
Let $D = \Det(G) \cong G/\SL(n,q)$ and let 
$d = \vert D \vert$. In the case that $Z = \{1\}$ we have the 
following. 
\begin{itemize}
\item[(a)] If $p=2$ then $n=1$, and $T(G) \cong D/S$.
\item[(b)] Suppose that $p>2$ divides $q-1$. If $p$ divides $d$,
then $n = 1$ and $T(G) \cong \bZ/a\bZ \oplus \bZ/2\bZ$,
where $d = ap^t$ for $a$ relatively prime to $p$.
\item[(c)] If $p>2$ divides $q-1$ and $p$ does not divide $d$, 
then there are two possibilities:
\begin{itemize}
\item[(i)] assuming that~2 does not divide $(q-1)/d$, then 
$T(G) \cong \bZ/d\bZ \oplus \bZ/4\bZ$. 
\item[(ii)] assuming that~2 divides $(q-1)/d$, then $
T(G) \cong \bZ/d\bZ \oplus \bZ/4\bZ \oplus \bZ/2\bZ$. 
\end{itemize}
\item[(d)] Suppose that $p$ does not divide $q-1$. Let $e$ be the least 
integer such that $p$ divides $q^e-1$. Then $n = e+f$ for some 
$f$ with $0 \leq f < e$. Let $m = (q-1)/d$ and $\ell = 
\gcd\big(m(q-1),q^e-1\big)/m$. Then we have two possibilities: 
\begin{itemize}
\item[(i)] if $f=0$ then $T(G) \cong \bZ/\ell\bZ \oplus \bZ/2e\bZ$,
\item[(ii)] while if $f >0$, then  
$T(G) \cong \bZ/2e\bZ \oplus \bZ/(q-1)\bZ \oplus \bZ/d\bZ$ 
(except that $T(G) \cong \bZ/2e\bZ \oplus 
\bZ/2\bZ$ if both $f=2$ and $q = 2$).
\end{itemize}
\end{itemize}
\end{thm}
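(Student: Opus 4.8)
The plan is to separate the structural assertions --- the isomorphism $T(G/Z)\cong T(\whn)$ and the exact sequence \eqref{eq:sequence} --- from the explicit computations in (a)--(d). For the structure, I would invoke the known fact that a finite group with cyclic Sylow $p$-subgroup $S$ has its group of endotrivial modules computed by restriction, $T(G)\xrightarrow{\ \sim\ }T(N_G(S))$. Applying this to $G/Z$, whose Sylow $p$-subgroup $\whs$ is cyclic (being a quotient of $S$), yields $T(G/Z)\cong T(\whn)$ with $\whn=N_{G/Z}(\whs)$; a short argument identifies the preimage of $\whs$ in $G$ as $SZ$, whose unique Sylow $p$-subgroup is $S$, so that $\whn=N/Z$ with $N=N_G(S)$.

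Since $\whs$ is normal in $\whn$, write $\whn=\whs\rtimes H$ with $H$ a $p'$-group. The restriction $T(\whn)\to T(\whs)$ carries $[\Omega(k)]$ to $[\Omega_{\whs}(k)]$, which generates $T(\whs)$ by Dade's computation for cyclic $p$-groups, so the map is surjective. An indecomposable endotrivial module in its kernel restricts stably trivially to the normal Sylow subgroup, hence is a summand of $\Ind_{\whs}^{\whn}k=k[\whn/\whs]$; as $H$ is a $p'$-group this forces it to be simple and inflated from $H$, and tensor-invertibility then forces dimension one. Thus the kernel is exactly $X(\whn)=\Hom(\whn,k^\times)$, which one reads off from the abelianization of $\whn=N/Z$ as $N/(Z[N,N])$, establishing \eqref{eq:sequence}.

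For (a)--(d) I would first determine when the Sylow $p$-subgroup is cyclic. With $e$ the multiplicative order of $q$ modulo $p$ one has $p\mid q^i-1$ iff $e\mid i$, and inspecting $|\GL(n,q)|$ shows the Sylow $p$-subgroup is cyclic exactly when $e\le n<2e$, i.e.\ $n=e+f$ with $0\le f<e$; for $e=1$ (hence for $p=2$) this means $n=1$. When $n=1$, $G$ is a subgroup of $\GL(1,q)=\bF_q^\times$, so $G$ is cyclic with $N=G$ and $D=\Det(G)=G$, and \eqref{eq:sequence} gives (a), (b), (c) once the $2$-parts of $X(G)$ and of $T(S)$ are recorded. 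When $p\nmid q-1$ and $n=e+f$, $\whs$ lies in a Singer torus of order $q^e-1$ in a $\GL(e,q)$-summand, $N=G\cap\bigl((C_{q^e-1}\rtimes C_e)\times\GL(f,q)\bigr)$, and its abelianization contributes a cyclic piece of order dividing $q-1$ from the torus (modulo the $C_e$-action and the $\Det$-constraint $d$), a factor $C_e$ from the Weyl/Galois part, and $\Hom(\GL(f,q)^{\mathrm{ab}},k^\times)$ --- which has order $q-1$ for $f\ge 1$ with the sole exception of $\GL(2,2)\cong S_3$. Feeding this into \eqref{eq:sequence} yields (d), including the $q=2$, $f=2$ exception.

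The delicate point is pinning down the (possibly non-split) extension \eqref{eq:sequence}, i.e.\ the order of $[\Omega(k)]$ in $T(\whn)$; this is what produces the $\bZ/4$ in (c) and the $\bZ/2e$ in (d). The key computation is that $2[\Omega(k)]=[\Omega^2_{\whn}(k)]$ lies in $X(\whn)$ (its restriction to $\whs$ being stably trivial) and equals the one-dimensional module by which $\whn$ acts on $\whs/\Phi(\whs)\cong\bF_p$, a character of order $e$ in the generic case and of order dividing $2$ when $e=1$. Hence $T(\whn)$ is the quotient of $X(\whn)\oplus\bZ$ by the single relation $2[\Omega(k)]=[\Omega^2_{\whn}(k)]$, and a Smith normal form computation against the explicit structure of $X(\whn)$ produces every group on the list. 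I expect this last bookkeeping --- tracking how $d$, $q-1$, $e$ and their $2$-parts interact for each intermediate $G$ --- to be the main obstacle, the conceptual steps being reductions to the cyclic-Sylow theory and routine block-theoretic facts.
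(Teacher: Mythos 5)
Your structural reductions mirror the paper's: $T(G/Z)\cong T(\whn)$ via the cyclic-Sylow theorem, the exact sequence \eqref{eq:sequence}, $X(\whn)\cong N/(Z[N,N])$, and pinning down the extension by computing $\Omega^2_{\whn}(k)$ as the one-dimensional character by which $\whn$ acts on $\whs/\Phi(\whs)$. Two small points: the cyclic-Sylow theorem in the literature (and in the paper, Theorem~\ref{thm:cyclic2}, quoted from \cite{MT}) identifies $T(G)$ with $T(N_G(\wts))$ for $\wts$ the unique subgroup of order $p$, via \emph{induction}, and one must then check that $N_G(\wts)=N_G(S)$ in the cases at hand (the paper asserts this explicitly); and a summand of $k[\whn/\whs]$ need not be \emph{simple} -- what you actually want is that $\whs$ acts trivially so the summand is inflated from $H$, and then tensor-invertibility with $kH$ semisimple forces dimension one.

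The substantive gap is in your case analysis for (a)--(d). You infer from $|\GL(n,q)|$ that a cyclic Sylow $p$-subgroup forces $e\le n<2e$, and for $e=1$ conclude $n=1$, and then claim that (a), (b) and (c) are all handled by the $n=1$ analysis. That criterion governs $\GL(n,q)$, but the Sylow $p$-subgroup of $G$ with $\SL(n,q)\subseteq G\subseteq\GL(n,q)$ can be cyclic for a \emph{wider} range of $n$ when $p\nmid d$, because it is then the Sylow of $\SL(n,q)$, which is cut down by the determinant condition. Concretely, for $p>2$ dividing $q-1$ and $p\nmid d$ (case (c)), the relevant value is $n=2$: the Sylow of $\SL(2,q)$ is the cyclic group $\{\operatorname{diag}(\zeta,\zeta^{-1}):\zeta^{p^t}=1\}$, whereas the Sylow of $\GL(2,q)$ is homocyclic of rank two. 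Your $n=1$ analysis gives $G\subseteq\bF_q^\times$ cyclic, for which the Sylow $p$-subgroup is trivial under the hypothesis $p\nmid d$; so (c) simply does not arise at $n=1$. The $n=2$ case requires an explicit description of $N=N_{\SL(2,q)\text{-stuff}}(S)$, its commutator subgroup, and the distinction between the two subcases according to whether the generator of the Sylow $2$-subgroup of $\bF_q^\times$ lies in $D$ -- this is exactly where the paper produces the dichotomy $[N,N]=T$ versus $[N:T]/[N,N]$ of index $2$, giving the extra $\bZ/2\bZ$ in (c)(ii). As written, your sketch has no route to that computation, and the Smith-normal-form bookkeeping you defer to can only be carried out once the correct $N$ and $N/[N,N]$ are in hand.
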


The paper is organized as follows. In 
Sections 2 and 3 of the paper we present some preliminary 
results on endotrivial modules and general linear groups. In 
particular, we prove that the torsion free rank of the group 
of endotrivial modules is never more than one. This is a consequence
of known facts about the conjugations of $p$-subgroups in 
the general linear group. In Section 4, a proof is given for 
Theorem \ref{thm:noncyclic} in the case that $G = \SL(n,q)$ 
and $p$ divides $q-1$. Balmer's characterization of the kernel
of restriction is reviewed in Section 5. In Section 6 notation 
and preliminaries are set up for Section 7 where 
an important case of Theorem \ref{thm:noncyclic} is proved. 
The remaining cases, for $G = \SL(n,q)$ with noncyclic Sylow 
$p$-subgroup, are considered in Sections 8 and 9.  An essential 
feature of the proofs is the use of the theory of Young modules
for the general linear group.  In the last section, the 
results are assembled and the proofs of 
Theorems~\ref{thm:noncyclic} and \ref{thm:cyclic} are presented.

\vskip.2in 
\noindent{\em Acknowledgements:} 
The authors would like to thank Paul Balmer for many helpful 
conversations and for urging us to look closely at his papers on 
the kernel of restriction. The second author acknowledges the 
Department of Mathematics at the University of Georgia for its support and
hospitality during the initial stages of this project. 
The authors of this paper drew a lot of inspiration from experimental
calculations using the computer algebra system Magma \cite{BoCa}, 
though none of it is actually used in the proofs of this paper. 
We are grateful to Caroline Lassueur for pointing out an error
in an earlier version of the paper. 
Finally, we wish to thank the referee for many helpful suggestions. 
\section{Preliminaries}

Throughout this paper, let $k$ denote an algebraically
closed field of prime characteristic~$p$. We assume 
that all modules are finitely generated.
If $A$ and $B$ are $kG$-modules for $G$ a finite group, 
we write $A \cong B \oplus \proj$ to mean that $A$ is 
isomorphic to the direct sum of $B$ with some 
projective module. We write $k$ for the trivial $kG$-module.  
Unless otherwise specified, the symbol $\otimes$ 
is the tensor product $\otimes_k$ of the underlying vector spaces, 
and in case of $kG$-modules, this is a $kG$-module 
with $G$ acting via the diagonal action on the factors.

We start by reviewing some basics of the study of endotrivial 
modules and set some notation. If $M$ is a $kG$-module, 
and $\varphi: Q \rightarrow M$
its projective cover, then we let $\Omega^1(M)$, or simply $\Omega(M)$,
denote the kernel of~$\varphi$ (called the first syzygy of~$M$).
Likewise, if $\vartheta: M \rightarrow Q$ is the injective
hull of $M$ (recall that $kG$ is a self-injective ring so $Q$ is also 
projective), then $\Omega^{-1}(M)$ denotes the cokernel of $\vartheta$.
Inductively, we set $\Omega^n(M) = \Omega(\Omega^{n-1}(M))$ 
and $\Omega^{-n}(M) = \Omega^{-1}(\Omega^{-n+1}(M))$ for 
all integers $n > 1$.

Assume that $G$ has order divisible by~$p$.
A $kG$-module $M$ is {\em endotrivial} provided its 
endomorphism algebra $\End_k(M)$ is isomorphic 
(as a $kG$-module) to the direct sum of the
trivial module $k$ and a projective $kG$-module.
Recall that $\Hom_k(M, N) \cong M^*\otimes N$ 
where $M^*$ denotes the $k$-dual $\Hom_k(M,k)$ of~$M$. 
In other words, a $kG$-module $M$ is endotrivial 
if and only if $\Hom_k(M, M) \cong M^*\otimes M\cong k\oplus\proj$,

Any endotrivial module $M$ splits as the direct sum
$M_\diamond\oplus\proj$
for an indecomposable endotrivial $kG$-module $M_\diamond$, 
which is unique up to isomorphism.  We define an equivalence relation
$$
M\sim N~\Longleftrightarrow~M_\diamond\cong N_\diamond
$$
on the class of endotrivial $kG$-modules, and let $T(G)$ be the set of
equivalence classes. Every equivalence class contains a unique
indecomposable module up to isomorphism.  The tensor product induces an
abelian group structure on the set $T(G)$ as in $[M]+[N]=[M\otimes N].$ 
The zero element of $T(G)$ is the class~$[k]$ of the 
trivial module, consisting of all modules of the 
form $k\oplus\proj$. The inverse of the class of 
a module $M$ is the class of the dual module~$M^*$.

The group $T(G)$ is called the {\em group of endotrivial $kG$-modules}.
It is known to be a finitely generated abelian group.
In particular, the torsion subgroup $TT(G)$ of $T(G)$ is finite.
We define $TF(G)=T(G)/TT(G)$.
Thus, determining the structure of $T(G)$ can be split into two tasks:
(i) compute $TF(G)$ and (ii) determine $TT(G)$.

The rank of the free abelian group $TF(G)$ is called the 
torsion-free rank of $T(G)$. It can be computed from a 
formula given in~\cite{CMN1}, which extends work 
of Alperin~\cite{A}, and solely depends on the 
$p$-subgroup structure of $G$. 
Of particular interest is the $p$-rank of $G$ which is the rank of the largest 
elementary abelian $p$-subgroup of $G$ (i.e., the 
logarithm to base $p$ of the order of such subgroup).
If the $p$-rank of $G$ is one, then $TF(G)=\{0\}$. 
For an elementary abelian $p$-group $E$ of rank at least~$2$, 
Dade~\cite{D1,D2} proved that $T(E)\cong\bZ$, 
and is generated by the class
$[\Omega(k)]$. Now let $n_G$ be the number of conjugacy classes of
maximal elementary abelian $p$-subgroups 
of $G$ of order $p^2$. These are maximal
in the sense that they are not contained 
in a larger elementary abelian
$p$-subgroup.  

\begin{thm}\label{thm:rank} (\cite{CMN1}, Theorem 3.1)
Assume that the $p$-rank of $G$ is at least~$2$.
The torsion-free rank of $T(G)$ is equal to the number~$n_G$ defined
above if $G$ has $p$-rank $2$, and is equal to $n_G+1$ if $G$ has rank
at least $3$. 
\end{thm}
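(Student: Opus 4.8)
The plan is to compute $TF(G)$ through restriction to the elementary abelian $p$-subgroups of $G$, using Dade's theorem that $T(E)\cong\bZ$ for such $E$ of rank $\ge2$. First I would establish the detection statement: an endotrivial $kG$-module $M$ is torsion in $T(G)$ if and only if $\res^G_E M\sim k$ for every elementary abelian $p$-subgroup $E\le G$ of rank at least $2$. One direction is immediate. For the other, restrict to a Sylow $p$-subgroup $P$; by Carlson--Th\'evenaz a $kP$-module restricting trivially in $T(E)$ for all elementary abelian $E\le P$ of rank $\ge2$ is torsion in $T(P)$, so it suffices that $\ker\big(T(G)\to T(P)\big)$ be torsion. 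The latter follows from the Green correspondence: an indecomposable endotrivial $kG$-module has vertex $P$, so restriction to $N_G(P)$ is injective on $T(G)$, and a class restricting trivially to $P$ is carried to a one-dimensional $kN_G(P)$-module, hence to a torsion class. This yields an injection
$$ TF(G)\hookrightarrow \prod_{[E]} TF(E), $$
the product over $G$-conjugacy classes $[E]$ of elementary abelian $p$-subgroups of rank $\ge2$, each factor $\cong\bZ$ and generated by $[\Omega_E(k)]$.

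Next I would cut this target down. If $E\le F$ are elementary abelian of rank $\ge2$, a projective cover of $k$ over $kF$ restricts to a projective cover plus a projective summand over $kE$, so $\res^F_E\Omega_F(k)\cong\Omega_E(k)\oplus\proj$, and similarly conjugation carries $[\Omega_E(k)]$ to $[\Omega_{{}^{g}E}(k)]$. Hence the image of $TF(G)$ lies in the subgroup on which coordinates indexed by comparable or by conjugate subgroups agree, so the injection factors through $\bZ^{c}$, where $c$ is the number of $G$-orbits of connected components of the poset $\CE_{\ge2}(G)$ of elementary abelian $p$-subgroups of rank $\ge2$ ordered by inclusion. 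It then remains to prove that $c=n_G$ when the $p$-rank of $G$ is $2$ and $c=n_G+1$ when it is at least $3$, and that the injection $TF(G)\hookrightarrow\bZ^{c}$ has finite cokernel.

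For the count: if the $p$-rank is $2$ there are no strict inclusions in $\CE_{\ge2}(G)$, so each rank-$2$ subgroup is its own component and $c=n_G$ by definition. If the $p$-rank is at least $3$, the rank-$2$ subgroups lying in no elementary abelian subgroup of rank $\ge3$ are precisely those counted by $n_G$ and each is an isolated component, while all remaining vertices should lie in a single component. This is a Quillen-type connectivity statement: inside a fixed elementary abelian $F$ of rank $\ge3$ any two rank-$2$ subgroups $A,B$ are joined by the path through $A+B$, and one patches such local pieces together through a central elementary abelian subgroup (enlarging each vertex by the elements of order $p$ in $Z(P)$), reducing to the case of a $p$-group and then to a rank-reduction induction. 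Proving this connectivity — and thereby seeing where the $p$-rank hypothesis is essential — is the main obstacle; for groups $G$ with $\SL(n,q)\subseteq G\subseteq\GL(n,q)$ it can in any case be verified directly from the explicit description of the elementary abelian $p$-subgroups established later in the paper.

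For the finite cokernel: one must produce enough endotrivial modules. Following Alperin, for each component $\mathcal C$ I would use a syzygy $\Omega_\CX(k)$ fitting in $0\to\Omega_\CX(k)\to\bigoplus_{X\in\CX}k[G/X]\to k\to0$, choosing the family $\CX$ of subgroups so that the module is endotrivial, is trivial in $T(E)$ for every $E$ outside $\mathcal C$, and is nontrivial on $\mathcal C$; such modules span a finite-index subgroup of $\bZ^{c}$. Alternatively one reduces along $TF(G)\hookrightarrow TF(P)$ to Carlson--Th\'evenaz's computation of $T(P)$ and identifies the image as the fusion-stable subgroup, matching $P$-conjugacy with $G$-conjugacy of maximal rank-$2$ subgroups via Alperin's fusion theorem, which is the secondary technical point. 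Combining the upper bound from the factorization through $\bZ^{c}$ with this lower bound gives that $T(G)$ has torsion-free rank $n_G$ if the $p$-rank is $2$ and $n_G+1$ if it is at least $3$, as asserted.
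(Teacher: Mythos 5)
The paper does not prove this statement; it is quoted verbatim from \cite{CMN1}, Theorem 3.1, so there is no argument in the present text to compare against. That said, your sketch follows exactly the strategy of Alperin \cite{A} and its extension in \cite{CMN1}: detect torsion on elementary abelian subgroups of rank $\ge 2$, reduce the target $\prod_{[E]} TF(E)$ to $\bZ^{c}$ by compatibility along inclusions and conjugation, identify $c$ with $n_G$ or $n_G+1$, and then construct enough endotrivial modules to show the embedding has finite cokernel. The overall shape is right.

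However, you have left unproven precisely the two points that carry the weight of the theorem, and you should not regard them as routine. First, the claim that all vertices of $\CE_{\ge 2}(G)$ other than the isolated maximal rank-$2$ subgroups lie in a \emph{single} connected component (giving $c = n_G + 1$ when the $p$-rank is $\ge 3$) is the core of Alperin's argument; it is a delicate connectivity result about the poset of elementary abelian subgroups of a $p$-group, typically established by induction on the order of $P$ using centralizers of central elements of order $p$, and your appeal to ``a Quillen-type connectivity statement'' with a vague patching through $Z(P)$ does not discharge it. Your remark that one can verify it directly for $\SL(n,q)\subseteq G\subseteq\GL(n,q)$ is beside the point, since the theorem as stated (and as needed to \emph{derive} the $\GL$-case via Theorem~\ref{thm:torfree}) is a general statement about finite groups. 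Second, the lower-bound (finite-cokernel) step is only gestured at: constructing, for each component $\mathcal{C}$, an endotrivial module whose restriction vanishes on the other components but not on $\mathcal{C}$ requires choosing the family $\CX$ in Alperin's relative syzygy construction with care and checking endotriviality, and you also need to confirm that the resulting classes are fusion-stable so that they actually lift to $T(G)$ rather than merely to $T(P)$. Until both of these are carried out, the sketch establishes only the upper bound $\operatorname{rk}\,TF(G)\le c$ and the identification of $c$ in the $p$-rank $2$ case (where the poset is an antichain); it does not yet prove the theorem.
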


In~\cite{CT3}, it is shown that if $G$ is a $p$-group, then $TT(G)$ is
trivial unless $G$ is cyclic, dihedral, generalized quaternion or
semi-dihedral. In many cases, the
subgroup $TT(G)$ is determined by the kernel of the restriction
$T(G) \to T(S)$ where $S$ is a Sylow $p$-subgroup of $G$. 
We introduce the following term for the convenience of
stating some of the results.  

\begin{defi}
We say that a $kG$-module $M$ has {\em trivial Sylow restriction}, if
the restriction of $M$ to a Sylow $p$-subgroup $S$ of $G$ 
has the form $M_{\downarrow S} \cong k \oplus \proj$.  Any such 
module is endotrivial and is 
the direct sum of an indecomposable trivial source module
and a projective module. 
\end{defi}

The following is a straightforward application
of Mackey formula (see Lemma~2.6 in~\cite{MT}).

\begin{prop}\label{prop:normal-p}
Let $S$ be a Sylow $p$-subgroup of $G$.
If $G$ has a 
nontrivial normal $p$-subgroup then every indecomposable $kG$-module
with trivial Sylow restriction has dimension one.  
\end{prop}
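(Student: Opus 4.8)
The plan is to exploit the hypothesis that $G$ has a nontrivial normal $p$-subgroup $N$, together with the Mackey-formula machinery cited from \cite{MT}, to force any indecomposable $kG$-module $M$ with trivial Sylow restriction to be one-dimensional. First I would fix a Sylow $p$-subgroup $S$ of $G$, so that $N \subseteq S$ since $N \trianglelefteq G$ is a $p$-group, and recall that $M$ is a trivial source module: being a direct summand of a module induced from the trivial module of $S$ (indeed $M \mid k{\uparrow}_S^G$ because $M_{\downarrow S}$ has a trivial direct summand and $M$ is a summand of $M_{\downarrow S}{\uparrow}^G_{\phantom{S}}$ via the unit of the adjunction). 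The vertex of $M$ is then some $p$-subgroup $Q \subseteq S$, and the source is the trivial $kQ$-module.

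Next I would analyze the restriction $M_{\downarrow N}$. Since $N$ is normal, Clifford theory says $M_{\downarrow N}$ is a direct sum of $G$-conjugates of a single indecomposable $kN$-module, each appearing with the same multiplicity. On the other hand, $N \subseteq S$, so $M_{\downarrow N} = (M_{\downarrow S})_{\downarrow N} \cong k_{\downarrow N} \oplus (\proj)_{\downarrow N} = k \oplus (\text{free } kN\text{-module})$. A free $kN$-module is indecomposable only if $N$ is trivial, so comparing with the Clifford decomposition: the trivial summand $k$ forces the unique $N$-constituent (up to conjugacy) to be $k_N$, and since $N$ acts trivially on one summand and all constituents are $G$-conjugate, $N$ must act trivially on all of $M$ — the free part cannot appear unless $N = 1$. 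Hence $N$ acts trivially on $M$, so $M$ is a module for $G/N$.

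Now I would set up an induction on $|G|_p$, or more cleanly on the order of the normal $p$-subgroup. Passing to $\bar G = G/N$: the image of $M$ is an indecomposable $k\bar G$-module whose restriction to the Sylow $p$-subgroup $\bar S = S/N$ of $\bar G$ is again $k \oplus \proj$ (projectivity is preserved under this quotient for the Sylow restriction since $M_{\downarrow S} \cong k \oplus \proj$ and $N$ acts trivially, so $M_{\downarrow S}$ as an $S/N$-module — wait, one must be careful here). The cleaner route is to invoke that the vertex $Q$ of $M$, as a trivial source $kG$-module, satisfies $N \subseteq Q$ because $M_{\downarrow N}$ is trivial forces nothing, but rather: Lemma~2.6 of \cite{MT} presumably already packages exactly the statement that a trivial source endotrivial module over a group with nontrivial normal $p$-subgroup is one-dimensional, or gives the Mackey computation $M_{\downarrow S}{\uparrow}^G_{\phantom S}\!\!_{\downarrow S} \cong \bigoplus_{SgS} (M_{\downarrow S \cap \ls{g}{S}}){\uparrow}^S$; combined with $M_{\downarrow S} \cong k \oplus \proj$ and the fact that $N \subseteq S \cap \ls{g}{S}$ for every $g$ (as $N$ is normal), one sees every double-coset summand already contains $N$ in its vertex, which pins the vertex of $M$ down and forces $M$ to be the inflation of a one-dimensional module. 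The main obstacle is handling the projective part bookkeeping precisely: ensuring that "$k \oplus \proj$ restricted through a normal $p$-subgroup, then compared with Clifford theory" really does eliminate the projective summand rather than merely constraining it. Once $N$ acts trivially, a short induction on $|N|$ (or a direct appeal to the $p$-rank and the structure of $O_p(G)$) reduces to the base case $O_p(G/N') = 1$, where endotriviality plus trivial Sylow restriction over a group with trivial $p$-core still need the one-dimensionality conclusion — but at that point $M$ has been pushed down to a quotient and the inductive hypothesis applies, yielding $\dim M = 1$.
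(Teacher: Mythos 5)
Your first two paragraphs already contain a complete and correct proof, and you don't seem to realize it. Once you establish, via Clifford theory, that every indecomposable summand of $M_{\downarrow N}$ must be a $G$-conjugate of $k_N$ (hence equal to $k_N$), and you compare with $M_{\downarrow N} \cong k_N \oplus F$ where $F$ is a free $kN$-module, Krull--Schmidt forces $F = 0$ because $kN$ is an indecomposable non-trivial $kN$-module for $N \neq 1$. At that point $M_{\downarrow N} \cong k_N$, so $\dim_k M = 1$ and you are done. There is nothing left to inflate, no induction to set up, and no case analysis on $O_p$-quotients. This Clifford-theoretic reading is essentially what Lemma~2.6 of \cite{MT} (which the paper cites without reproducing a proof) packages via the Mackey formula: $N \subseteq S \cap \ls{g}S$ for all $g$, so $N$ acts trivially on $k_S{\uparrow}^G$ and hence on $M$, and then trivial $N$-action on $M_{\downarrow S} \cong k \oplus \proj$ kills the projective part.

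The rest of the proposal is a genuine problem, not merely a stylistic one. The attempted induction is circular: your proposed ``base case'' is a quotient $\bar G$ with $O_p(\bar G) = 1$, but there the hypothesis of the proposition (existence of a nontrivial normal $p$-subgroup) fails, so there is no inductive hypothesis to invoke. You flag this yourself (``wait, one must be careful here''), and indeed the claim that $\bar M_{\downarrow \bar S} \cong k \oplus \proj$ cannot be established without already knowing $\dim M = 1$ -- inflation of a free $k\bar S$-module is not free over $kS$ when $N \neq 1$. The lesson is to recognize that showing the free summand of $M_{\downarrow N}$ vanishes is not an intermediate step but the whole proof. Also, a small slip: you write ``a free $kN$-module is indecomposable only if $N$ is trivial,'' which is false ($kN$ itself is always indecomposable, being local); what you mean, and what the argument needs, is that a nonzero free $kN$-module is not a direct sum of trivial modules unless $N = 1$.
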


\begin{cor} \label{cor:normal1}
Suppose that $G = \GL(n,q)$ and $p$ divides $q-1$. 
Then every indecomposable $kG$-module with trivial Sylow restriction 
has dimension one. 
\end{cor}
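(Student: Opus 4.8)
The plan is to deduce Corollary~\ref{cor:normal1} from Proposition~\ref{prop:normal-p} by exhibiting a nontrivial normal $p$-subgroup of $G = \GL(n,q)$ when $p \mid q-1$. First I would observe that the center $Z(G)$ consists of the scalar matrices $\lambda I_n$ with $\lambda \in \bfq^\times$, so $Z(G) \cong \bfq^\times$ is cyclic of order $q-1$. Since $p$ divides $q-1$ by hypothesis, the cyclic group $Z(G)$ has a (unique) subgroup of order $p$, say generated by $\zeta I_n$ where $\zeta \in \bfq^\times$ has multiplicative order $p$. This subgroup is a nontrivial $p$-subgroup of $G$, and being central it is certainly normal in $G$.

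With this normal $p$-subgroup in hand, Proposition~\ref{prop:normal-p} applies verbatim: every indecomposable $kG$-module with trivial Sylow restriction has dimension one. So the corollary follows immediately. The only point to verify is that $p$ actually divides the order of $G$, so that $T(G)$ and the notion of endotriviality are meaningful here; but this is automatic since $p \mid q-1 \mid |\GL(n,q)|$, or simply because we have produced a nontrivial $p$-subgroup.

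The argument has essentially no obstacle: the one thing to be careful about is that $q-1$ could a priori fail to be divisible by $p$, but that is exactly excluded by the hypothesis, and the scalar matrices give the required central $p$-subgroup regardless of whether the full Sylow $p$-subgroup is abelian, cyclic, or neither. Thus the corollary is a one-line specialization of Proposition~\ref{prop:normal-p}, with the normal $p$-subgroup taken to be the order-$p$ subgroup of the scalar matrices $Z(\GL(n,q)) \cong \bfq^\times$.
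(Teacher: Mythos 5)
Your proposal is correct and takes essentially the same approach as the paper: both observe that the center of $\GL(n,q)$ has order $q-1$ divisible by $p$, so $G$ has a nontrivial normal (indeed central) $p$-subgroup, and then invoke Proposition~\ref{prop:normal-p}. The extra detail you supply about the scalar matrices and the order-$p$ subgroup of $Z(G)$ is a harmless elaboration of the paper's one-line argument.
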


\begin{proof}
With these hypotheses, the center of $G$ has order divisible 
by $p$ and so $G$ has a nontrivial normal $p$-subgroup. The result now
follows from Proposition~\ref{prop:normal-p}. 
\end{proof}

\begin{cor} \label{cor:normal2}
Suppose that $G = \SL(n,q)$ and $p$ divides 
both $q-1$ and $n$. If $M$ is an indecomposable 
endotrivial module with trivial Sylow restriction 
then $M$ is trivial.  
\end{cor}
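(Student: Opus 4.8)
The plan is to reduce the statement directly to Proposition~\ref{prop:normal-p}: I would first exhibit a nontrivial normal $p$-subgroup of $G=\SL(n,q)$, deduce from that Proposition that $M$ must be one-dimensional, and then rule out nontrivial one-dimensional modules using that $\SL(n,q)$ is perfect.

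For the first point, recall that $Z(G)$ consists of the scalar matrices $\lambda I_n$ with $\lambda\in\bF_q^\times$ and $\lambda^n=1$, so $Z(G)$ is cyclic of order $\gcd(n,q-1)$. Since $p$ divides both $n$ and $q-1$ by hypothesis, $p\mid\gcd(n,q-1)$, and the subgroup of order $p$ in the cyclic group $Z(G)$ is a nontrivial central, hence normal, $p$-subgroup of $G$. Proposition~\ref{prop:normal-p} then applies without change: since $M$ is an indecomposable $kG$-module with trivial Sylow restriction, $\dim_k M=1$.

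It remains to check that a one-dimensional $kG$-module must be trivial. Such a module is afforded by a group homomorphism $\chi\colon G\to k^\times$, and for $n\geq 2$ the group $\SL(n,q)$ is perfect --- the only non-perfect cases being $\SL(2,2)$, which cannot occur here because $p\mid q-1$ forces $q\geq 3$, and $\SL(2,3)$. Hence $\chi$ is trivial and $M\cong k$.

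The one step needing care is precisely the residual case $G=\SL(2,3)$ with $p=2$, where $G^{\mathrm{ab}}\cong\bZ/3$ does support nontrivial one-dimensional $k$-modules, each of which is endotrivial with trivial Sylow restriction; this is where I expect the main difficulty. It is handled by observing that a Sylow $2$-subgroup of $\SL(2,3)$ is quaternion, hence non-abelian, so this group lies outside the range in which the corollary is invoked in the sequel. With that caveat, every step above is a direct application of a result already in hand.
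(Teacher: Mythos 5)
Your argument is essentially the same as the paper's: produce the central $p$-subgroup of order $p$ in $Z(\SL(n,q))$ coming from $p\mid\gcd(n,q-1)$, apply Proposition~\ref{prop:normal-p} to get $\dim_k M=1$, and then conclude $M\cong k$ because $\SL(n,q)$ is perfect.

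What you add, correctly, is the observation that the last step is not literally valid in one edge case: the hypotheses ``$p\mid q-1$ and $p\mid n$'' admit $G=\SL(2,3)$, $p=2$, and there $\SL(2,3)$ is \emph{not} perfect ($G^{\mathrm{ab}}\cong\bZ/3$, with $[G,G]=Q_8$). Since $k$ is algebraically closed of characteristic~$2$ it contains the cube roots of unity, so $G$ has two nontrivial one-dimensional $kG$-modules, each of which restricts trivially to the Sylow $2$-subgroup $Q_8=[G,G]$ and is therefore a nontrivial indecomposable endotrivial module with trivial Sylow restriction. Thus the corollary, exactly as stated, is false in this one case, and the paper's proof silently passes over it by invoking perfectness unconditionally. (The other non-perfect special linear group, $\SL(2,2)$, is indeed excluded by $p\mid q-1$, as you note.) Be careful, though, about how you frame the resolution: the remark that the Sylow $2$-subgroup of $\SL(2,3)$ is quaternion, hence the case lies outside the abelian-Sylow setting used in the rest of the paper, explains why the flaw is harmless downstream, but it does not \emph{prove} the corollary as written. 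The clean repair is to strengthen the hypothesis (for instance, assume $n\geq 3$, or $(n,q)\neq(2,3)$, or simply $\SL(n,q)$ perfect), after which your argument and the paper's coincide and are complete.
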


\begin{proof}
In this case, the center of $G$ contains the scalar 
matrix $\zeta I_n$, where $I_n$ is the $n \times n$
identity matrix and $\zeta$ is a primitive $p^{th}$ 
root of unity. Thus by Proposition \ref{prop:normal-p},
$M$ has dimension one. Now observe that the only $kG$-module of 
dimension one is the trivial module since $\SL(n,q)$ is a
perfect group. 
\end{proof}

\begin{prop} \label{prop:normal2}
Suppose that $H \subseteq J$ are subgroups of $G$ 
such that $H$ contains a Sylow $p$-subgroup of $J$.
Assume that there is a $p$-subgroup $Q$ of $H$, with 
the property that every left coset of $H$ in $J$
is represented by an element in the normalizer of 
$Q$. That is, assume that $J = N_J(Q)\cdot H$. 
If $M$ is an endotrivial $kJ$-module such that
$M_{\downarrow H} \cong X \oplus \proj$ where $X$
has dimension one, then M has dimension one. In particular, 
if every indecomposable endotrivial $kH$-module with trivial Sylow
restriction has dimension one, then every indecomposable endotrivial
$kJ$-module with trivial Sylow restriction has dimension one. 
\end{prop}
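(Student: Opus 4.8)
The plan is to exploit the hypothesis $J = N_J(Q)\cdot H$ to reconstruct $M$, up to projective summands, from its restriction to $H$, using a Green-correspondence / vertex-and-source argument. First I would note that since $M$ is endotrivial, its indecomposable core $M_\diamond$ is a trivial source module; in fact the hypothesis $M_{\downarrow H}\cong X\oplus\proj$ with $\dim X=1$ means that after twisting $M$ by the inflation to $J$ of a one-dimensional $kJ$-module extending $X$ (one should check $X$ extends, or instead argue with $M\otimes M^\ast$-type tricks so that this reduction is harmless), we may assume $M_{\downarrow H}\cong k\oplus\proj$, i.e.\ $M$ has trivial Sylow restriction as a $kJ$-module. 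So it suffices to prove the ``In particular'' clause, and indeed the whole statement reduces to: if $M$ is an indecomposable endotrivial $kJ$-module with $M_{\downarrow H}\cong k\oplus\proj$, then $\dim M=1$ — but wait, we want $\dim M=1$, and since $M$ is then an indecomposable trivial-source module with a one-dimensional $H$-restriction we must show the trivial source forces $M$ itself one-dimensional under the coset hypothesis; the cleanest route is to show directly $M\cong k$ after the twist, hence $\dim M=1$ before it.

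The key step is the following. Let $S$ be a Sylow $p$-subgroup of $J$ contained in $H$. Since $M_{\downarrow H}$ has the trivial module as its unique non-projective indecomposable summand, $M_\diamond$ (the non-projective core of $M$ as a $kJ$-module) is the Green correspondent of the trivial $kS$-module with respect to some subgroup between $N_J(S)$ and $J$ — but more usefully, $M_\diamond$ is a trivial source module whose Brauer quotient at $S$ is one-dimensional. I would then use the Mackey/Brauer-quotient formula: for any $p$-subgroup $R$ of $H$, the Brauer construction $M(R)$ computed inside $J$ agrees with the one computed inside $H$ precisely because every left coset of $H$ in $J$ has a representative in $N_J(Q)$, and $Q\subseteq H$; concretely, $(M_{\downarrow H})(R)$ injects into $M(R)$ and the coset condition lets one push the missing pieces, which are indexed by $H\backslash J / (\text{stabilizers})$, into the image. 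The upshot is that the trivial-source $kJ$-module $M_\diamond$ has all its relevant Brauer quotients one-dimensional, matching those of $k$, so $M_\diamond$ and $k$ have the same vertex $S$ and the same (trivial) source, whence $M_\diamond\cong k$ by the uniqueness in the Green correspondence. Untwisting, $M$ itself is one-dimensional.

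The main obstacle I expect is making the Brauer-quotient bookkeeping precise — i.e.\ showing that the condition $J=N_J(Q)\cdot H$ really does force $M(R)$ (over $kJ$) to be controlled by $M_{\downarrow H}(R)$ (over $kH$) for the $p$-subgroups $R$ that matter, rather than just for $R=Q$. One has to be careful that $Q$ need not be normal in $H$ or contain $R$, so the naive Mackey decomposition of $M_{\downarrow H\uparrow J}$ does not immediately apply to $M$; instead I would argue on the level of trivial source modules and their characterization by the function $R\mapsto \dim M(R)$, using that $M\mid M_{\downarrow H}\uparrow J$ (as $M$ is a summand of the induced module of its restriction, because $[J:H]$ is prime to $p$) together with the coset hypothesis to conclude $\dim M(R)\le \dim (M_{\downarrow H})(R)=\dim k(R)$ for all $R$, and $\ge 1$ for $R\subseteq S$, forcing equality and hence $M_\diamond\cong k$. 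If that inequality proves awkward to establish in full generality, an acceptable fallback is to invoke Balmer's description (reviewed later in the paper) of the kernel of $T(J)\to T(H)$ in terms of "weak $H$-homomorphisms'': the hypothesis $J=N_J(Q)\cdot H$ makes every double coset representative already normalize a fixed $p$-subgroup, which trivializes the relevant cocycle and shows the kernel of $T(J)\to T(H)$ consists only of one-dimensional modules — but since Balmer's machinery is only set up in Section~5, I would present the direct Brauer-quotient argument here and remark that it is an instance of that later principle.
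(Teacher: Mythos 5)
Your proposal goes through much heavier machinery (Brauer quotients, trivial source classification, Green correspondence) than the paper uses, and it contains a gap at the decisive step. The paper's proof is a one-paragraph Mackey-style argument: since $H$ contains a Sylow $p$-subgroup of $J$, the indecomposable part of $M$ is a direct summand of $X^{\uparrow J}\cong \sum_{x\in[J/H]}x\otimes X$. Because the coset representatives $x$ can be chosen inside $N_J(Q)$, for $g\in Q$ one has $g\cdot(x\otimes e)=x\otimes (x^{-1}gx)e=x\otimes e$ (the element $x^{-1}gx$ lies in $Q$, a $p$-group, which must act trivially on the one-dimensional module $X$ in characteristic $p$). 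Hence $Q$ acts trivially on all of $X^{\uparrow J}$, so $M_{\downarrow Q}$ is a trivial $kQ$-module; but endotriviality gives $M_{\downarrow Q}\cong k_Q\oplus\proj$, and a trivial module for a nontrivial $p$-group has no nonzero free summand, so $M_{\downarrow Q}\cong k_Q$ and $\dim M=1$.

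The gap in your argument is the claimed bound $\dim M(R)\le\dim(M_{\downarrow H})(R)=\dim k(R)$ for \emph{all} $p$-subgroups $R$. The Brauer quotient $M(R)$ depends only on the $kR$-module structure of $M$, so $M(R)=(M_{\downarrow H})(R)$ automatically — this is an equality, not an inequality, and it never uses the coset hypothesis. For nontrivial $R$ it gives $\dim M(R)=1$, but that is true without any coset hypothesis and does not control $\dim M$. At $R=\{1\}$ it gives $\dim M(1)=\dim M_{\downarrow H}=\dim M$, a tautology, not $\dim M\le 1$. So the bound $\dim M\le 1$ is never actually established. What the coset hypothesis buys, and what your sketch misses, is not an inequality on Brauer quotient dimensions but the much stronger statement that $M_{\downarrow Q}$ is an \emph{entirely trivial} module (no free part at all), so that $M=M(Q)$; only then does $\dim M(Q)=1$ force $\dim M=1$. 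Your Green-correspondence fallback has a similar problem: knowing $M_\diamond$ has trivial source and vertex a Sylow $p$-subgroup $S$, and is the correspondent of some one-dimensional $kN_J(S)$-module, is exactly the generic situation for torsion endotrivial modules — it does not by itself force $M_\diamond\cong k$. Finally, the opening reduction — twisting by ``the inflation to $J$ of a one-dimensional $kJ$-module extending $X$'' — cannot be carried out in general since $X$ need not extend from $H$ to $J$ (and in the paper's applications it typically will not); fortunately that reduction is unnecessary once the $Q$-restriction argument is in place.
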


\begin{proof}
Since $H$ contains a Sylow $p$-subgroup of $J$ it follows that $M$ is a 
direct summand of $X^{\uparrow J}$. By hypothesis, $Q$ acts 
trivially on $X$ and also 
trivially on 
\[
X^{\uparrow J} \ \cong \ kJ \otimes_{H} X 
\ \cong \ \sum_{x \in [J/H]} x \otimes X. 
\]
That is, $(X^{\uparrow J})_{\downarrow Q}$ is a direct sum
of copies of $k_Q$, while $M_{\downarrow Q} \cong k_Q \oplus \proj$.
Hence, $M$ has dimension one. 
\end{proof}

We end this section with a description of the structure 
of the group of endotrivial modules in the case that a 
Sylow $p$-subgroup of $G$ is cyclic. The main point is that the 
subgroup $N$ in the following theorem has the property $N \cap {}^g N$
is a $p^\prime$-group for any $g \notin N$. So the induction
and restriction functors define equivalences between the 
stable categories of $kG$- and $kN$-modules. 

\begin{thm}\label{thm:cyclic2}\cite[Theorem 3.6]{MT}\quad 
Suppose that $S$ is cyclic. Let $\wts$ be the unique subgroup
of $S$ of order $p$, and let $N =N_G(\wts)$.
\begin{itemize}
\item[(a)] $T(G)= \ \{[M^{\uparrow G}]~|~[M]\in T(N)\} \ \cong \ T(N)$.
\item[(b)] There is an exact sequence
$$
\xymatrix{ 0\ar[r] & X(N)\ar[r]& T(N) 
\ar[r]^{\res^N_S}& T(S) \ar[r] & 0 }
$$
where $X(N)$ denotes the group of isomorphism classes of the one
dimensional $kN$-modules. 
\end{itemize}
\end{thm}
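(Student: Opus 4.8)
The plan is to invoke the Green correspondence machinery together with the fact that, when $S$ is cyclic, the subgroup $N = N_G(\wts)$ controls $p$-fusion in the strongest possible way: for any $g \in G \setminus N$, the intersection $N \cap {}^gN$ has order prime to $p$. This is the standard consequence of Burnside's fusion argument applied to the unique subgroup $\wts$ of order $p$ — indeed $\wts$ is characteristic in $S$, hence $S \subseteq N$, and any two $G$-conjugate elements of $\wts$ are already $N$-conjugate, which forces the trivial-intersection condition. From this one gets that the induction and restriction functors
$$
M \longmapsto M^{\uparrow G}, \qquad L \longmapsto L_{\downarrow N}
$$
are mutually inverse equivalences $\stmod(kN) \simeq \stmod(kG)$ after passing to the appropriate summands; concretely, for an indecomposable non-projective $kN$-module $M$ with vertex inside $S$, the Green correspondent $f(M) = M^{\uparrow G}_\diamond$ is the unique indecomposable non-projective summand of $M^{\uparrow G}$, and $f(M)_{\downarrow N} \cong M \oplus \proj$. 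First I would record this, citing the Mackey/Green-correspondence setup already referenced via \cite{MT}.

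For part (a): I would show that $M \mapsto [M^{\uparrow G}]$ gives a well-defined group homomorphism $T(N) \to T(G)$ and that it is an isomorphism. That induction of an endotrivial $kN$-module is endotrivial over $kG$ follows because $\End_k(M^{\uparrow G})$ can be analysed by Mackey's formula: the only double coset contributing a non-projective summand is the trivial one (all others involve $N \cap {}^gN$, a $p'$-group, hence project), and that term gives $\End_k(M)^{\uparrow G} \sim k^{\uparrow G}$, which is stably $k \oplus \proj$ over $kG$ since $N$ contains a Sylow $p$-subgroup. Multiplicativity, $[(M \otimes M')^{\uparrow G}] = [M^{\uparrow G}] + [M'^{\uparrow G}]$, follows from the same trivial-intersection analysis (the "tensor identity" $M^{\uparrow G} \otimes M'^{\uparrow G} \sim (M \otimes M'^{\uparrow G}_{\downarrow N})^{\uparrow G}$ combined with Green correspondence). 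Injectivity and surjectivity are then immediate from the stable equivalence: restriction $T(G) \to T(N)$, $[L] \mapsto [L_{\downarrow N}]$, is a two-sided inverse, because $L_{\downarrow N}$ is again endotrivial (restriction along a subgroup containing a Sylow $p$-subgroup always preserves endotriviality) and $(L_{\downarrow N})^{\uparrow G} \sim L$ by Green correspondence.

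For part (b): the map $\res^N_S : T(N) \to T(S)$ is restriction, which is well-defined and a homomorphism as above. Its kernel consists of the classes of indecomposable endotrivial $kN$-modules with trivial Sylow restriction; by the dimension-one argument (a normal-$p$-subgroup-type argument, or directly: such a module is a trivial-source module with vertex $\wts$ normal in $N$, hence a summand of $k^{\uparrow N}$ restricted appropriately, forcing dimension one — this is exactly the mechanism of Proposition~\ref{prop:normal-p}), these are precisely the one-dimensional $kN$-modules, so $\Ker(\res^N_S) = X(N)$. Surjectivity of $\res^N_S$ is the one genuinely substantive point: given an endotrivial $kS$-module $V$, one must produce an endotrivial $kN$-module restricting to it. Here one uses that $S \trianglelefteq N$ is not quite true, but $\wts \trianglelefteq N$, and $N/C_N(\wts)$ embeds in $\Aut(\wts) \cong \bZ/(p-1)$, a $p'$-group; a standard argument — extend $V$ first to $SC_N(\wts)$ using that $S$ is abelian (cyclic), then induce/co-induce up to $N$ along the $p'$-index subgroup $SC_N(\wts) \le N$ and split off the right summand — yields the required lift. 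I expect the verification of surjectivity of $\res^N_S$, and in particular the extension of an endotrivial $kS$-module over the $p'$-extension $N$, to be the main obstacle; everything else is a bookkeeping application of Green correspondence and Mackey's formula. (In the write-up this is where one leans hardest on \cite[Theorem 3.6]{MT}, which is precisely the statement being quoted.)
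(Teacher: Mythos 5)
This theorem is quoted from \cite[Theorem~3.6]{MT} and the paper does not prove it; it only records, in the paragraph immediately preceding the statement, the single key observation that $N \cap {}^gN$ is a $p'$-group for every $g \notin N$, so that induction and restriction give equivalences of stable categories. Your proposal correctly seizes on exactly this trivial-intersection property and runs the standard Green-correspondence argument, so in strategy you are aligned with what the paper is pointing at.

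Two comments on the details. First, your justification of the trivial-intersection property via ``Burnside's fusion argument'' is a step removed from what is actually needed, and the logic of how ``$G$-conjugacy in $\wts$ implies $N$-conjugacy'' is supposed to force the intersection condition is left vague. The direct argument is shorter and cleaner. Since $S$ is cyclic and $\wts \trianglelefteq N$, the group $\wts$ is the unique subgroup of order $p$ in every Sylow $p$-subgroup of $N$, hence contains every element of order $p$ in $N$. If $x \in N \cap {}^gN$ has order $p$, then $\langle x\rangle = \wts$ from $x\in N$, and likewise $\langle x\rangle = {}^g\wts$ from $x\in {}^gN$, forcing $g\in N_G(\wts)=N$. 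No fusion control theorem is invoked.

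Second, you single out surjectivity of $\res^N_S\colon T(N)\to T(S)$ as ``the one genuinely substantive point'' and propose an extension-plus-induction construction to lift an arbitrary endotrivial $kS$-module to $N$. This is more work than the statement requires and somewhat misdirected. For a cyclic $p$-group $S$, Dade's theorem says $T(S)$ is generated by $[\Omega(k)]$ (so $T(S)\cong\bZ/2\bZ$ unless $|S|=2$, when it is trivial), and $\Omega_N(k)_{\downarrow S}\cong \Omega_S(k)\oplus\proj$ because $S$ is a Sylow $p$-subgroup of $N$. Hence $\res^N_S$ is automatically surjective with no lifting argument at all. The substantive content of part (b) is the identification of the kernel with $X(N)$, which you handle correctly via the normal-$p$-subgroup mechanism of Proposition~\ref{prop:normal-p}.
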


%%%%%%%%%%%%%%%%%%%%%%   section   %%%%%%%%%%%%%%%%%%%%%%%%%

\section{$TF(G)$ for general linear groups}\label{sec:GLback}

In this section, we begin by introducing some notation and recalling  
a few facts about the general linear groups and its subgroups. 
From this information we are able to deduce that the 
rank of the torsion free part of the group of endotrivial modules
is always one or zero. The zero case only occurs when the 
Sylow $p$-subgroup is cyclic. The basic premise of the proof is 
that every elementary abelian $p$-subgroup of $\GL(n,q)$ 
is conjugate to a subgroup of a specific standard subgroup. 
This has been observed before in other contexts. We sketch a proof for the 
sake of completeness. General reference material can be
found in the standard textbooks~\cite{AM,GLS,W}. 

We set some notation that is used throughout the paper. 
\begin{notation} \label{note:basic}
Let $n$ is a positive integer, and $q$ a power of a prime such that $\gcd(p,q)=1$ where $p$ is the 
characteristic of the field $k$. Let $e$ denote the least integer such that $p$ divides
$q^e-1$. Thus, $e$ is the smallest integer such that 
$p$ divides the order of $\GL(e,q)$. Let $r, f$ be 
integers such that $n = re+f$ for $0 \leq f < e$. 
\end{notation}

A Sylow $p$-subgroup $S$ of $G$ is a direct product of 
iterated wreath products.  From~\cite[Section~4.10]{GLS} a conjugate of $S$
is contained in the normalizer of an appropriate torus of $G$.
In particular, for $G = \GL(n,q)$, a Sylow subgroup of $G$ can 
be realized as the Sylow $p$-subgroup of a semi direct
product $(C_{p^t})^{\times r} \rtimes \CS_r$, where $\CS_r$ is the 
symmetric group on $r$-letters and $p^t$ is highest power
of $p$ dividing $q^e-1$. For $G = \SL(n,q)$, a Sylow $p$-subgroup
is the intersection of $G$ with a Sylow $p$-subgroup of $\GL(n,q)$. 
The exact structure is not important for this paper, except
that it is helpful to know the following consequence of these observations.

\begin{lemma} \label{lem:whenabel}
A Sylow $p$-subgroup of $\GL(n, q)$ or of $\SL(n,q)$ is 
abelian if and only if $n < pe$. 
\end{lemma}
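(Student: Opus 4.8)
The plan is to work through the explicit description of a Sylow $p$-subgroup recalled just before the lemma. Recall $e$ is the multiplicative order of $q$ mod $p$, and $p^t \| q^e - 1$. A Sylow $p$-subgroup of $\GL(n,q)$ is realized inside $\GL(e,q)^{\times r} \rtimes \CS_r \subseteq \GL(re, q) \subseteq \GL(n,q)$, since the extra $f$ coordinates with $0 \le f < e$ contribute a $p'$-factor (as $p \nmid |\GL(f,q)|$ when $f < e$). More precisely, $S$ is the Sylow $p$-subgroup of the wreath-type group $C_{p^t} \wr \CS_r$ (built from iterated wreath products as in~\cite[Section~4.10]{GLS}). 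So first I would reduce to analyzing when the Sylow $p$-subgroup of $C_{p^t} \wr \CS_r$ is abelian. The base group $(C_{p^t})^{\times r}$ is always abelian, so the question is entirely about the symmetric-group part: when is a Sylow $p$-subgroup of $\CS_r$ trivial, i.e. when does $p \nmid r!$, equivalently $r < p$?

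Second, I would assemble the arithmetic. Writing $n = re + f$ with $0 \le f < e$, the condition $r < p$ is equivalent to $re \le (p-1)e$, hence (using $0 \le f < e$) to $n = re + f < pe$; conversely $n < pe$ forces $re \le n < pe$ so $r < p$. Thus the Sylow $p$-subgroup is abelian exactly when $r < p$, i.e. $n < pe$. For the ``only if'' direction I must exhibit a nonabelian subgroup when $r \ge p$: take $p$ of the $C_{p^t}$ factors together with a $p$-cycle permuting them, giving $C_{p^t} \wr C_p$ inside $S$, which is nonabelian (the $p$-cycle does not centralize the base group). For the ``if'' direction, when $r < p$ the symmetric group $\CS_r$ has trivial Sylow $p$-subgroup, so a Sylow $p$-subgroup of $\GL(n,q)$ lies in the abelian group $(C_{p^t})^{\times r}$ and is therefore abelian.

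Third, I would handle the $\SL$ case: a Sylow $p$-subgroup of $\SL(n,q)$ is $S \cap \SL(n,q)$ for a Sylow $p$-subgroup $S$ of $\GL(n,q)$, a subgroup of $S$; hence if $S$ is abelian so is the $\SL$-Sylow, giving one implication immediately. For the converse, when $n \ge pe$ I would note that the nonabelian subgroup $C_{p^t} \wr C_p$ constructed above can be arranged to lie in $\SL(n,q)$ — e.g. by realizing it on a $pe$-dimensional summand and padding with the identity, and if necessary adjusting one generator by a determinant-correcting $p'$-element on the complementary coordinates, which does not affect the $p$-part or the nonabelian commutator relation; alternatively one invokes that $[\GL(n,q) : \SL(n,q)] = q-1$ is coprime to $p$ only when $p \nmid q-1$, but in general the wreath subgroup meets $\SL$ in a subgroup still containing the relevant nonabelian section. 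The main obstacle is precisely this last bookkeeping: verifying cleanly that the nonabelian $p$-section survives intersection with $\SL(n,q)$, since the naive wreath generators need not have determinant $1$. I expect this is routine once one notes that multiplying by scalar or block-diagonal $p'$-corrections preserves both the $p$-part of the group and the failure of commutativity, but it is the step requiring the most care.
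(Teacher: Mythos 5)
The paper itself gives no proof of this lemma; it is stated as an immediate consequence of the Sylow-structure description recalled just above it (citing \cite[Section~4.10]{GLS}), so you are supplying an argument rather than matching one. Your core reduction for $\GL(n,q)$ is right: a Sylow $p$-subgroup is that of $C_{p^t} \wr \CS_r$, it is abelian exactly when the symmetric-group part contributes nothing, i.e.\ $r<p$, and the arithmetic $r<p \Longleftrightarrow n<pe$ with $n=re+f$, $0\le f<e$, is correct in both directions.

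The $\SL(n,q)$ half, however, contains a step that cannot work as written, and it is exactly the one you flag as ``requiring the most care.'' A determinant-correcting $p'$-element does not exist: $\Det$ is a homomorphism into $\bF_q^\times$, so the determinant of a $p$-element has $p$-power order while the determinant of a $p'$-element has order prime to $p$; hence $\Det(gz)=1$ with $z$ a commuting $p'$-element forces $\Det(g)=1$ already. Luckily no correction is needed in almost every case, and the clean split is the one your final sentence gestures at. When $e>1$ one has $p\nmid q-1$, so $[\GL(n,q):\SL(n,q)]=q-1$ is prime to $p$ and a Sylow $p$-subgroup of $\SL(n,q)$ \emph{is} one of $\GL(n,q)$; the $\SL$ claim is then verbatim the $\GL$ claim. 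When $e=1$ and $p$ is odd, a $p$-cycle is an even permutation, so its permutation matrix already lies in $\SL(n,q)$ and it visibly fails to centralize the determinant-one diagonal $p$-torus (it moves $\mathrm{diag}(\zeta,\zeta^{-1},1,\dots,1)$ for $\zeta$ of order $p$). The only case needing any correction is $p=2$ (which forces $e=1$): for $n\ge3$ multiply the transposition by the $2$-element $-1$ on a spare coordinate, and for $n=2$ the construction breaks down entirely and one must instead invoke that a Sylow $2$-subgroup of $\SL(2,q)$, $q$ odd, is generalized quaternion and hence nonabelian.
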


We also need the following fact about the elementary abelian
$p$-subgroups of $G$. For notation, let $E_p$ be an elementary abelian $p$-subgroup 
of $G = \SL(n,q)$ or $G = \GL(n,q)$ constructed as follows. 
In the case that $p$ divides $q-1$ 
(so $e = 1$), we let $E_p$ be the intersection 
of $G$ with the subgroup of all diagonal matrices whose entries are  $p^{th}$ roots of unity in $\bF_q$. 
Otherwise, we let $L = L_1 \times L_2 
\times \dots \times L_r \times L_{r+1}$ be the Levi subgroup of $G$
consisting of $e \times e$ diagonal blocks, 
except for the last one which is
an $f \times f$ block if $f > 0$ and is 
empty otherwise. Let $E_p$ be a maximal elementary abelian 
$p$-subgroup of $L$. It is the subgroup of elements of order
dividing $p$ in a Sylow $p$-subgroup of $L$. It
has order $p^r$ and is generated by 
elements that are the identity in all but one of the blocks of $L$.
Note that for each $1\leq i\leq r$ any two subgroups of order $p$ in
$L_i$ are conjugate since the Sylow $p$-subgroups of $L_i$ are
cyclic. More extensive details are given in Section \ref{sec:GLinfo}.

\begin{lemma} \label{lem:GLeleab}
Suppose that $E \subseteq \SL(n,q)$ is an elementary abelian 
$p$-subgroup. Then $E$ is conjugate to a subgroup of $E_p$. 
\end{lemma}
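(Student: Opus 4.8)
The plan is to reduce the statement about an arbitrary elementary abelian $p$-subgroup $E$ of $\SL(n,q)$ to a statement about a torus, and then analyze that torus concretely. First I would observe that since $E$ is a $p$-group with $\gcd(p,q)=1$, every element of $E$ is semisimple; because $E$ is abelian, its elements are simultaneously diagonalizable over the algebraic closure $\overline{\bF_q}$. Moreover $E$ normalizes itself, so $E$ is contained in a maximal torus of $\GL(n,q)$; by the structure theory of maximal tori (via rational canonical form / the classification of conjugacy classes of maximal tori by partitions of $n$), after conjugating in $\GL(n,q)$ we may assume $E$ lies in a torus $\bF_{q^{d_1}}^\times \times \cdots \times \bF_{q^{d_s}}^\times$ embedded block-diagonally, where $d_1+\cdots+d_s = n$. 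The point is that such a torus has a cyclic Sylow $p$-subgroup in each block factor, and the $p$-part of $\bF_{q^{d_i}}^\times$ is nontrivial precisely when $e \mid d_i$ (where $e$ is as in Notation~\ref{note:basic}).

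Next I would extract from this the structural constraint: writing $E$ as a subgroup of the direct product of the cyclic $p$-parts of the torus factors, $E$ is an elementary abelian subgroup of a product of cyclic $p$-groups, so its rank is at most the number of factors $d_i$ that are divisible by $e$. Since $\sum d_i = n$ and each such factor contributes at least $e$ to the sum, the rank of $E$ is at most $r = \lfloor n/e \rfloor$ (in the notation $n = re+f$), which matches the rank of $E_p$. The remaining work is to show that $E$ is not merely of the right rank but actually conjugate \emph{into} $E_p$. For this I would refine the torus: within each block $\bF_{q^{d_i}}^\times$ with $e \mid d_i$, the unique subgroup of order $p$ sits inside $\bF_{q^e}^\times \subseteq \bF_{q^{d_i}}^\times$, so by conjugating each block I can arrange that $E$ lies in the subtorus whose nontrivial $p$-part factors are all $e$-dimensional; grouping these together with the remaining $f$-dimensional piece gives exactly (a $G$-conjugate of) the Levi subgroup $L = L_1 \times \cdots \times L_r \times L_{r+1}$, and $E$ lands inside a Sylow $p$-subgroup of $L$, hence inside its unique maximal elementary abelian subgroup $E_p$ — using the remark at the end of Notation~\ref{note:basic} that the order-$p$ subgroups in each cyclic $L_i$ are all $L_i$-conjugate to pin down the precise embedding.

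Finally I would check that the conjugating element can be taken in $\SL(n,q)$ rather than just $\GL(n,q)$: if $g \in \GL(n,q)$ conjugates $E$ into $E_p$, then multiplying $g$ by a suitable diagonal matrix that centralizes $E_p$ (which exists with any prescribed nonzero determinant, since $E_p$ is contained in the diagonal or block-diagonal torus and its centralizer contains a full torus surjecting onto $\bF_q^\times$ under $\Det$) corrects the determinant to $1$ without disturbing the conjugation. This last determinant-adjustment step is essentially the same device used in Corollary~\ref{cor:normal2} and Proposition~\ref{prop:normal2}.

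The main obstacle I expect is not any single deep fact but the bookkeeping in the middle step: carefully moving from ``$E$ lies in some maximal torus'' to ``$E$ lies in the \emph{specific} Levi $L$ up to conjugacy,'' i.e. showing that one can always conjugate each torus block so that the $p$-torsion is concentrated in standard $e$-dimensional sub-blocks and that nothing forces $E$ to be spread across a block in a way incompatible with $E_p$. Since the Sylow $p$-subgroups of the relevant block tori are cyclic, the key leverage is that an elementary abelian group meeting a cyclic group meets it in a subgroup of order at most $p$, which collapses each block's contribution to the ``standard'' order-$p$ subgroup inside $\bF_{q^e}^\times$; once that is said cleanly, the rest is routine.
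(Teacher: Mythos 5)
Your route is genuinely different from the paper's: the paper decomposes the natural module $\bV$ directly as an $\bF_q E$-module by Maschke and observes that every simple summand has $\bF_q$-dimension $e$ (nontrivial) or $1$ (trivial), which immediately gives a block decomposition landing $E$ in the Levi $L$. You instead route through the classification of maximal tori of $\GL(n,q)$ by partitions of $n$. The two are equivalent in substance, since the maximal torus you want comes precisely from the module decomposition, but your version introduces some slack that the paper's phrasing avoids.

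There are two specific places where your bookkeeping is loose enough to call a gap. First, the justification ``$E$ normalizes itself, so $E$ is contained in a maximal torus of $\GL(n,q)$'' is not an argument — every subgroup normalizes itself. The correct reason is that $E$ generates a commutative semisimple $\bF_q$-subalgebra $A\cong\prod_i\bF_{q^{d_i}}$ of $M_n(\bF_q)$ (equivalently, centralizers of semisimple subsets of $\GL_n$ are connected Levi subgroups), and $E\subseteq A^\times$ embeds in the maximal torus $\prod_i(\bF_{q^{d_i}}^\times)^{m_i}$ built from a compatible basis. Notably, this argument automatically forces each $d_i\in\{1,e\}$, which is exactly what the paper's module decomposition records. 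Second, your subsequent step leaves $d_i$ arbitrary: but the order-$p$ subgroup of $\bF_{q^{d_i}}^\times$, viewed as a $d_i\times d_i$ matrix group, does not land in a single $e\times e$ block of the form $\GL(e,q)\times I_{d_i-e}$; it acts without fixed vectors and is conjugate to $d_i/e$ diagonal copies of the same $e\times e$ generator. Correspondingly, ``the remaining $f$-dimensional piece'' need not have dimension $f$ when some $d_i$ with $e\mid d_i$ exceeds $e$. Both issues evaporate once one fixes the torus so that all $d_i\in\{1,e\}$, so the argument can be repaired, but as written the grouping step does not literally produce the Levi $L$.

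On the positive side, the final determinant-adjustment step (correcting a $\GL(n,q)$-conjugation to an $\SL(n,q)$-conjugation by premultiplying with an element of $C_{\GL(n,q)}(E_p)$ of the right determinant, using surjectivity of $\Det$ on that centralizer — Lemma~\ref{lem:glebasics}(c)) is correct and is a detail the paper's own proof passes over silently. That is a worthwhile precision. Overall: the approach is sound but the middle section needs the ``$d_i\in\{1,e\}$'' normalization made explicit, after which it reduces to essentially the paper's argument with extra machinery on top.
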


\begin{proof}
Let $\bV$ denote the natural module for $G$. The restriction
$\bV_E$ of $\bV$ to a $\bfq E$-module is completely reducible 
by Maschke's Theorem. In the case that $p$ divides $q-1$ all
irreducible modules have dimension one. The change of basis 
matrix for the direct sum decomposition conjugates $E$ to a 
subgroup of $E_p$. In the case that $e>1$, the absolutely 
irreducible characters of $E$ are maps into $\bF_{q^e}^\times$ and
there is a copy of $\bF_{q^e}^\times$ in $\SL(e,q)$. Hence, all of 
the simple $\bfq E$-modules have dimension $e$ or $1$. The only one 
of dimension one is the trivial module. Again, the change of
basis matrix that creates the direct sum decomposition conjugates
$E$ into an elementary abelian $p$-subgroup of $L$. This 
can then be conjugated into $E_p$.
\end{proof}

This information settles the issue of the torsion free part
of the group of endotrivial modules. Note that this theorem 
does not depend upon the Sylow $p$-subgroup being abelian. 

\begin{thm} \label{thm:torfree}
Let $G$ be a group such that $\SL(n,q) \subseteq G \subseteq
\GL(n,q)$. Suppose that the Sylow $p$-subgroup of $G$ has 
rank at least $2$. Then $TF(G) \cong \bZ$ is generated by 
the class of $\Omega(k)$.
\end{thm}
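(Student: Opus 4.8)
The plan is to invoke Theorem~\ref{thm:rank}, which reduces the computation of the torsion-free rank of $T(G)$ to counting the number $n_G$ of conjugacy classes of maximal elementary abelian $p$-subgroups of order $p^2$, together with a $+1$ when the $p$-rank is at least $3$. So the first step is to show that $n_G$ is controlled by the structure of $E_p$ from Lemma~\ref{lem:GLeleab}, and in particular that there is \emph{at most one} conjugacy class of elementary abelian subgroups of order $p^2$ that are maximal among elementary abelian $p$-subgroups.

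First I would split into the two cases of Lemma~\ref{lem:GLeleab}. Suppose $E \subseteq G$ is an elementary abelian $p$-subgroup of rank $2$. By Lemma~\ref{lem:GLeleab}, after conjugation $E \subseteq E_p$. If $p \mid q-1$ (so $e=1$), then $E_p$ is the group of diagonal matrices (inside $G$) with $p$-power-root-of-unity entries, so any rank-$2$ subgroup $E$ of $E_p$ is still contained in $E_p$ (or in a bigger such diagonal group if $n$ is larger) --- in particular, when $E$ is \emph{maximal}, it is exactly a maximal elementary abelian subgroup of the relevant diagonal torus, and all of these are conjugate under the Weyl group (permutation matrices), so $n_G \le 1$. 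If $e > 1$, then $E_p$ has order $p^r$ and its subgroups of order $p$ all lie in the factors $L_i$; a rank-$2$ elementary abelian subgroup involves two of the blocks. Since $n = re + f$ and the blocks $L_1, \dots, L_r$ are all of size $e$, any such rank-$2$ subgroup can be conjugated by a block permutation to one supported on $L_1 \times L_2$, and within each $L_i$ the subgroups of order $p$ are conjugate (Sylow $p$-subgroups of $L_i$ are cyclic, as noted before Lemma~\ref{lem:GLeleab}); hence again there is a single conjugacy class, so $n_G \le 1$. In fact, because the Sylow $p$-subgroup has rank $\ge 2$, such a rank-$2$ subgroup exists, so $n_G = 1$ precisely when the rank is exactly $2$.

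Then I would conclude by cases on the $p$-rank $m$ of $G$. If $m = 2$, then every elementary abelian $p$-subgroup of rank $2$ is maximal (there is nothing larger), and by the previous paragraph they form a single conjugacy class, so $n_G = 1$ and Theorem~\ref{thm:rank} gives $TF(G) \cong \bZ$. If $m \ge 3$, then no rank-$2$ elementary abelian subgroup is maximal --- each is contained in a larger one, since $E_p$ itself (or a suitable conjugate enlargement) has rank $m \ge 3$ and contains it --- so $n_G = 0$, and Theorem~\ref{thm:rank} gives $TF(G) \cong \bZ$ as well. In both cases $TF(G) \cong \bZ$, and since the class $[\Omega(k)]$ is always nontorsion of infinite order when the $p$-rank is at least $2$ (as it restricts to a generator of $T(E) \cong \bZ$ for any elementary abelian $E$ of rank $\ge 2$, by Dade's theorem), it must generate the torsion-free quotient. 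The main obstacle I anticipate is the bookkeeping in showing that a maximal rank-$2$ elementary abelian subgroup really is conjugate into the standard $E_p$ \emph{as a maximal one} --- i.e.\ that maximality is preserved and that no two distinct "placements" inside $E_p$ fall into different $G$-classes; this is where one needs the facts that the blocks $L_i$ are all isomorphic of the same size and that block permutations lie in $G$ (for $\SL$, one uses signed permutation matrices or pairs of transpositions to stay in $\SL$), but it is routine given the description of $E_p$ recalled before Lemma~\ref{lem:GLeleab}.
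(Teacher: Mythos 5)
Your proposal follows the same route as the paper: invoke Theorem~\ref{thm:rank} and use Lemma~\ref{lem:GLeleab} to control $n_G$, then split on whether the $p$-rank $m$ is $2$ or at least $3$. Your final paragraph, which concludes $n_G = 1$ when $m = 2$ and $n_G = 0$ when $m \geq 3$, is the intended argument and is correct.

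However, the middle paragraph overreaches and contains a false claim. You try to show that \emph{all} rank-$2$ elementary abelian subgroups of $G$ lie in a single conjugacy class, arguing (in the $e > 1$ case) that a rank-$2$ subgroup of $E_p$ ``involves two of the blocks'' and can be carried by a block permutation to one supported on $L_1 \times L_2$. That is not true once $E_p$ has rank $r \geq 3$: if $A, B, C$ are order-$p$ generators sitting in three distinct factors $L_1, L_2, L_3$, then $\langle AB,\, BC\rangle$ has rank $2$ but its common fixed subspace on the natural module has dimension $n - 3e$, whereas the fixed subspace of $\langle A, B\rangle$ has dimension $n - 2e$; so the two groups are not conjugate in $\GL(n,q)$, let alone by a block permutation. (An analogous example exists in the $e=1$ case for $n$ large.) So rank-$2$ subgroups of $E_p$ need not form a single class. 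Fortunately the error is harmless, because you never need that claim: when $m = 2$, the group $E_p$ itself has rank $2$, so Lemma~\ref{lem:GLeleab} forces every rank-$2$ elementary abelian subgroup to be conjugate to $E_p$ itself, giving $n_G = 1$; when $m \geq 3$, no rank-$2$ subgroup is maximal since it sits inside a conjugate of $E_p$ of larger rank, giving $n_G = 0$ with no conjugacy analysis at all. With the middle paragraph trimmed to these two observations, your proof coincides with the paper's one-line justification that the result is a direct consequence of Theorem~\ref{thm:rank} and Lemma~\ref{lem:GLeleab}.
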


\begin{proof}
The proof is a direct consequence of Theorem \ref{thm:rank} using 
Lemma \ref{lem:GLeleab}.
\end{proof}

We end this section with a few words about Young modules. The 
setting for general linear groups is described in ~\cite{ES}.
Let $G=\GL(n,q)$. A Young module is an indecomposable direct summand 
of an induced module $k_L^{\uparrow G}$ where $L$ is a Levi subgroup of $G$. 
The Young module corresponding to $L$ is indexed by
the same partition of $n$ that determines $L$. That is, if
$L=\GL(n_1, q)\times\dots\times\GL(n_t,q)$, for
$n=n_1+\dots+n_t$ with $n_1\geq \dots\geq n_t$, then 
for $\lambda=[n_1,\dots,n_t]$, the Young module $Y_\lambda$ 
is a distinguished direct summand of the induced module
$k_L^{\uparrow G}$. In particular $Y_{[n]} = k$ for the trivial
partition $[n]$ of $n$.

The key facts needed about Young modules are 
summarized in the following theorem.

\begin{thm} \label{thm:youngbasic} \cite{ES}
Suppose that $G = \GL(n,q)$ and $L$ is a Levi subgroup 
of $G$ corresponding to a partition $\lambda$ of $n$.
Then, 
\[
k_L^{\uparrow G} \quad \cong \quad Y_\lambda \oplus 
\bigoplus_{\mu > \lambda} (Y_\mu)^{\times a_\mu},
\]
for some multiplicities $a_{\mu}$. 
That is, $k_L^{\uparrow G}$ is a direct sum of Young modules
with exactly one of them being $Y_\lambda$. 
The other direct summands are Young modules that 
are indexed by partitions which are greater than $\lambda$ 
in the dominance ordering.
\end{thm}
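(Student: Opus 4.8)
The plan is to mimic James's theory of Young modules for the symmetric groups, transported to $\GL(n,q)$ in non-defining characteristic via the Hecke algebra / $q$-Schur algebra formalism of Dipper and James. For a partition $\lambda=[n_1,\dots,n_t]$ of $n$, write $M^\lambda:=k_{L_\lambda}^{\uparrow G}$, where $L_\lambda=\GL(n_1,q)\times\dots\times\GL(n_t,q)$. Each $M^\lambda$ is a permutation module on $G/L_\lambda$, hence a self-dual trivial source $kG$-module, and by the Mackey formula applied to $L_\mu$ and $L_\lambda$ one gets $\dim_k\Hom_{kG}(M^\lambda,M^\mu)=|L_\mu\backslash G/L_\lambda|$; a Bruhat-type analysis then exhibits a Hecke-style basis of $\Hom_{kG}(M^\lambda,M^\mu)$ indexed by the double cosets, governed combinatorially by matrices of non-negative integers refining $\lambda$ and $\mu$ (passing, if convenient, to induction from the parabolic $P_\lambda$ costs only extra summands accounted for elsewhere, since the unipotent radical of $P_\lambda$ is a $p'$-group). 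The goal is then to produce a distinguished indecomposable summand $Y_\lambda$ of $M^\lambda$ and run a downward induction on the dominance order $\trianglelefteq$ (the order written $>$ in the statement).

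The heart of the matter is a triangularity statement with respect to $\trianglelefteq$. I would construct in each $M^\lambda$ a submodule $S^\lambda$ --- the unipotent (``$q$-'') Specht module, realizable as the image of a composite $M^\lambda\to M^{\lambda'}\to M^\lambda$ built from the $q$-analogues of James's column homomorphisms --- and prove the submodule theorem in this setting: $\Hom_{kG}(S^\lambda,M^\mu)=0$ unless $\mu\trianglelefteq\lambda$, while $\Hom_{kG}(S^\lambda,M^\lambda)$ is one-dimensional, spanned by the inclusion. These follow from the combinatorics of the homomorphism basis above, via a Gaussian-elimination / semistandard-straightening argument showing which double-coset maps do not annihilate a fixed generator of $S^\lambda$. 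Alternatively, one may invoke the $q$-Schur algebra $S_q(n,n)\cong\End_{kG}\big(\bigoplus_\lambda M^\lambda\big)$ of Dipper--James, whose quasi-hereditary (highest weight) structure packages exactly this triangular information. Granting it, define $Y_\lambda$ to be the indecomposable summand of $M^\lambda$ onto which the inclusion $S^\lambda\hookrightarrow M^\lambda$ has nonzero projection; the one-dimensionality of $\Hom_{kG}(S^\lambda,M^\lambda)$ forces $Y_\lambda$ to be unique and to occur in $M^\lambda$ with multiplicity exactly one, and for $\mu\neq\lambda$ the vanishing statement shows $Y_\lambda$ is not a summand of $M^\mu$ unless $\mu\triangleright\lambda$.

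Now induct downward on dominance. The base case $\lambda=[n]$ is immediate, since $M^{[n]}=k=Y_{[n]}$ and nothing strictly dominates $[n]$. Assume the theorem for all $\nu\triangleright\lambda$. Any indecomposable summand $V$ of $M^\lambda$ that also occurs in some $M^\nu$ with $\nu\triangleright\lambda$ is, by the inductive hypothesis, isomorphic to $Y_\rho$ for some $\rho\trianglerighteq\nu\triangleright\lambda$, and this is consistent with the submodule theorem applied to $S^\rho$. Any summand $V$ of $M^\lambda$ not occurring in any such $M^\nu$ must, by the construction and multiplicity-one property of $Y_\lambda$, be exactly $Y_\lambda$; in particular it cannot repeat and is not isomorphic to any $Y_\mu$ with $\mu\triangleright\lambda$. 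Collecting summands gives $M^\lambda\cong Y_\lambda\oplus\bigoplus_{\mu\triangleright\lambda}(Y_\mu)^{\oplus a_\mu}$, as claimed.

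I expect the main obstacle to be exactly the triangularity / submodule theorem of the second paragraph: pinning down the relevant partial order on the double cosets of Levi (or parabolic) subgroups of $\GL(n,q)$ and carrying through the straightening argument is appreciably more delicate than in the symmetric-group case because of the parameter $q$. The cleanest resolution is to appeal to the Dipper--James $q$-Schur algebra $S_q(n,n)$, for which the requisite quasi-hereditary structure --- and hence the triangular decomposition of the $M^\lambda$ into Young modules --- is already established in the literature.
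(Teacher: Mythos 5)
The paper does not prove this statement; it is quoted from Erdmann--Schroll~\cite{ES}, whose own argument runs through exactly the Dipper--James $q$-Schur algebra machinery you invoke as your ``alternative'' route. So your outline is aimed at the right literature, and the last paragraph of your proposal correctly identifies both the genuine difficulty (the dominance triangularity of the double-coset $\Hom$-basis) and its cleanest resolution (quasi-hereditariness of $S_q(n,n)\cong\End_{kG}(\bigoplus_\lambda M^\lambda)$).

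However, the first route as written has a real gap at the step ``any summand $V$ of $M^\lambda$ not occurring in any $M^\nu$ with $\nu\triangleright\lambda$ must, by the construction and multiplicity-one property of $Y_\lambda$, be exactly $Y_\lambda$.'' The submodule theorem together with $\dim\Hom_{kG}(S^\lambda,M^\lambda)=1$ only tells you that the distinguished summand $Y_\lambda$ onto which $S^\lambda$ projects nontrivially is unique up to isomorphism and occurs with multiplicity one, and that $Y_\lambda$ is not a summand of $M^\mu$ when $\mu\not\trianglelefteq\lambda$. It does not rule out some \emph{other} indecomposable summand $V$ of $M^\lambda$ which happens to occur in no $M^\nu$ with $\nu\triangleright\lambda$; nothing you have shown forces $V\cong Y_\lambda$. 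This is precisely the content one has to buy from elsewhere: one needs to know that the indecomposable summands of $\bigoplus_\lambda M^\lambda$ are in bijection with partitions (equivalently, with the indecomposable projectives over the Schur algebra), so that the set of ``Young modules'' is exhausted by $\{Y_\lambda\}_\lambda$ and the dominance bookkeeping closes. The $q$-Schur algebra route does deliver exactly this, via the highest-weight classification of simples and their projective covers, so your second route is sound; if you want to keep the first route, you need an additional counting or cellular-basis argument (a Specht filtration of $M^\lambda$ with well-ordered subquotients, as in James's treatment for symmetric groups) to preclude stray summands.
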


%%%%%%%%%%%%%%%%%%%%   section    %%%%%%%%%%%%%%%%%%%%%%%%

\section{Abelian Sylow $p$-subgroups when $p$ divides $q-1$} 
\label{sec:abelian1}
In this section we assume that $G= \SL(n,q)$ and that the field
$k$ has characteristic $p$ dividing $q-1$. We assume further that the 
Sylow $p$-subgroup of $G$ is abelian and not cyclic. This requires that 
$2<n<p$. Our goal is to show that every indecomposable 
$kG$-module with trivial Sylow restriction has dimension one. 

Let $T$ be the torus of diagonal matrices with 
determinant one, and let $N$ be its normalizer in $G$. It is well
known that the Weyl group $N/T$ is isomorphic to the symmetric
group $\CS_n$. By~\cite[Theorem~4.10.2]{GLS}, our hypotheses also says 
that $T$ contains a Sylow $p$-subgroup $S$ of $G$. The first result is crucial to our arguments. 

\begin{prop} \label{prop:toral1}
Suppose that $M$ is a $kG$-module with trivial Sylow restriction. 
Then the restriction of $M$ to $T$ has the form,
$M_{\downarrow T} \cong k \oplus \proj$.
\end{prop}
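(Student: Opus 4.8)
The plan is to exploit the fact that $T$ is abelian (so all its $p$-restrictions are easy to control) together with the hypothesis that $S \subseteq T$, reducing the statement to a counting argument about the action of the Weyl group $\CS_n$ on the characters of $T$. First I would record the structure of $M_{\downarrow T}$: since $T$ is abelian, $M_{\downarrow T}$ decomposes as a direct sum of one-dimensional $kT$-modules, i.e. a sum of characters $\chi \in X(T) = \Hom(T,k^\times)$, each of $p'$-order since $k$ has characteristic $p$ but the whole module has finite-dimensional support. Write $M_{\downarrow T} \cong \bigoplus_\chi \chi^{\oplus m_\chi}$. Restricting further to the Sylow subgroup $S \subseteq T$ and using the hypothesis $M_{\downarrow S} \cong k \oplus \proj$, I get that exactly one of the $\chi$ (with multiplicity one) is trivial on $S$ but, more importantly, that the total multiplicity of characters \emph{nontrivial} on $S$ accounts for a free $kS$-module, while the characters trivial on $S$ contribute a single copy of $k_S$. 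The goal is to upgrade ``$\chi|_S$ trivial'' to ``$\chi$ trivial on all of $T$'', and to show the projective part over $T$ is genuinely $kT$-free.

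The mechanism for the upgrade is that $M$ is a $kG$-module, hence $M_{\downarrow T}$ is invariant (as a $kT$-module, up to isomorphism) under conjugation by $N$, so the multiset $\{(\chi, m_\chi)\}$ is stable under the Weyl group $W = N/T \cong \CS_n$ acting on $X(T)$. The next step is the key computation: I claim that a character $\chi$ of the determinant-one diagonal torus $T$ which is trivial on $S$ must be trivial, and more generally I would analyze the $W$-orbit of such a $\chi$. Here I would use the explicit description from Section~\ref{sec:GLback}: with $p \mid q-1$ and $2 < n < p$, a Sylow $p$-subgroup $S$ of $\SL(n,q)$ sits inside the $p$-torsion of $T$, which is $E_p$ (rank $n-1$), and the $\CS_n$-action permutes coordinates. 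A character trivial on $S$ but nontrivial on $T$ would have a nontrivial $W$-orbit whose size (computed as an index of a Young subgroup of $\CS_n$, which is divisible by $p$ as soon as the orbit is nontrivial and $n < p$ forces such indices to be $p'$ — wait, this needs care) must be matched against the constraint coming from $M_{\downarrow S} \cong k \oplus \proj$, i.e. that off the trivial character the multiplicities assemble into a free $kS$-module. A free $kS$-module restricted down and decomposed over $T$ forces the characters appearing to come in full $S$-coset packets; combining this with $W$-invariance and the orbit-size count pins down that the only character that can appear with ``extra'' multiplicity beyond the $kS$-free pattern is the trivial one, giving $M_{\downarrow T} \cong k \oplus (kT\text{-free})$, which is exactly $k \oplus \proj$ over $T$.

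I expect the main obstacle to be the interface between the two different ``projective'' requirements: over $S$ we know $M_{\downarrow S}$ is $k$ plus a free module, but over $T$ being projective means being $kT$-free, and a priori $M_{\downarrow T}$ could be $k$ plus a $kT$-module that is $S$-free but not $T$-free (e.g. a character of $T$ nontrivial on $T$ but whose $S$-restriction fits into a free $kS$-summand only in aggregate with others). Ruling this out is precisely where the $W$-invariance and the arithmetic $2 < n < p$ must be combined: the orbit of any nontrivial character of $T$ under $\CS_n$ has size dividing $n!$ but, since all proper Young-subgroup indices here are products of binomial-type coefficients with all parts $< p$, such an orbit has order prime to $p$, whereas an $S$-free $kT$-module decomposes over $T$ into $S$-cosets each of size $|S| = p^{?}$, a $p$-power $> 1$; the only way a $p'$-sized $W$-orbit can be assembled out of $p$-power-sized coset packets is if the orbit consists entirely of characters trivial on $S$, and among those the $W$-fixed (hence $W$-orbit of size $1$) ones are the candidates — a short check, using that $T$ has determinant one and $W = \CS_n$ acts with the standard permutation module structure whose $\CS_n$-fixed points in $X(T) \otimes \mathbb{Z}_{(p)}$ are trivial for $n < p$, then forces $\chi = 1$. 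Once that is established, the projective $kT$-summand is $kT$-free because its $S$-restriction is $kS$-free and the characters in it form complete $S$-orbits, so it is induced from the trivial character of $S$, i.e. $kT$-free. This completes the argument that $M_{\downarrow T} \cong k \oplus \proj$.
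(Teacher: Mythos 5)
Your overall strategy---exploit the $N$-action on $M_{\downarrow T}$ via Krull--Schmidt, reduce to showing the non-projective summand is Weyl-invariant, and observe that the only $\CS_n$-invariant character of $T$ over $k$ (with $n\geq 3$) is the trivial one---is sound and would give a genuinely different route from the paper, which instead restricts to $N$, applies Proposition~\ref{prop:normal-p} there, and then shows by explicit commutator computation that $T\subseteq[N,N]$. However, as written your argument contains a basic error that invalidates the intermediate steps.

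You begin by asserting that because $T$ is abelian, $M_{\downarrow T}$ decomposes as a direct sum of one-dimensional modules, i.e.\ characters $\chi\in\Hom(T,k^\times)$. This would be true if $T$ were a $p'$-group, but here $p$ divides $q-1$ and $|T|=(q-1)^{n-1}$, so $p$ divides $|T|$ and $kT$ is not semisimple; there is no such decomposition (already $kT$ itself is not a sum of characters). A related confusion runs through the rest of the argument: you repeatedly speak of characters of $T$ that are ``nontrivial on $S$,'' but over a field of characteristic $p$ every one-dimensional $kT$-module is automatically trivial on the Sylow $p$-subgroup $S$ (because $k^\times$ has no elements of order $p$), so the dichotomy you are drawing is vacuous. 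The correct decomposition of $M_{\downarrow T}$, using $T\cong S\times T'$ with $T'$ a $p'$-group, is into summands of the form $U\boxtimes\chi$ with $U$ an indecomposable $kS$-module and $\chi$ a character of $T'$; given $M_{\downarrow S}\cong k\oplus\proj$, this already forces $M_{\downarrow T}\cong\chi_0\oplus\proj$ with $\chi_0$ one-dimensional (this is also immediate from Proposition~\ref{prop:normal-p} applied to $T$, since $S\trianglelefteq T$). Consequently your whole orbit-counting mechanism---matching $p'$-sized $W$-orbits against ``$p$-power-sized $S$-coset packets''---has no content, because the objects you are trying to package (one-dimensional $kT$-modules summing to a free $kS$-module) cannot exist.

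The salvageable core of your proposal is the step you reach near the end: once $M_{\downarrow T}\cong\chi_0\oplus\proj$, Krull--Schmidt together with the $N$-action on $M$ implies the isomorphism class of $\chi_0$ is fixed by $W=N/T\cong\CS_n$, and for $n\geq 3$ a direct check (as you sketch: writing characters of $T$ as $n$-tuples modulo the diagonal and using transpositions) shows the only $W$-fixed character of $T$ over $k$ is trivial. Cleaned up in this way, your argument is a legitimate alternative to the paper's proof and arguably illuminates the same fact ($T\subseteq[N,N]$ and ``$X(T)^W$ trivial'' are two faces of the same coin), but the version you wrote does not establish it because the decomposition it rests on is false.
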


\begin{proof}
Note that the Sylow subgroup $S$ of $T$ is normal in $N$. Consequently, 
$M_{\downarrow N} \cong X \oplus R$ where $X$ has dimension one and 
$R$ is a projective $kN$-module.  We observe that $M_{\downarrow T}
\cong X_{\downarrow T} \oplus R_{\downarrow T}$, and $R_{\downarrow T}$
is a projective $kT$-module. So the proof is complete if we show that
$X_{\downarrow T}$ is a trivial $kT$-module. 

The representation $N \longrightarrow \GL(X)$ that affords $X$ has 
the commutator subgroup $[N,N]$ of $N$ in its kernel, since $\GL(X)$ is 
commutative. Choose a generator $\zeta$ for the multiplicative group 
$\bF_q^\times$ of nonzero elements of $\bF_q$.
The group $T$ is generated by diagonal matrices 
$D_1, \dots, D_{n-1}$ where $D_i$ has 
diagonal entries $1, \dots, 1, \zeta,
\zeta^{-1}, 1, \dots, 1$, the entry $\zeta$ 
being in the $i^{th}$ position. 
Let $\sigma_i$ be the block matrix 
\[
\sigma_i \quad = \quad \begin{pmatrix} I_{i-1} & 0 & 0 \\
0 & U & 0 \\
0 & 0 &  I_{n-i-1} \end{pmatrix},
\]
where $I_j$ is the $j \times j$ identity matrix and 
\[ 
U = \begin{pmatrix} 0 & -1 \\ 1 & 0 \end{pmatrix}.
\]
If $i < n-1$, let $W = D_{i+1}$, and if $i = n-1$, 
let $W = D_{i-1}$. We
then get the equality $\sigma_iW\sigma_i^{-1}W^{-1} = D_i$. 
Therefore $T \subseteq [N,N]$, and $T$ acts trivially on $X$.
This completes the proof. 
\end{proof}

For later use we record the following lemma which is proved
above. 

\begin{lemma} \label{lem:Tincommutator}
Suppose that $G = \SL(n,q)$ where $p$ divides $q-1$ and $3\leq n < p$.
Then for $T$ and $N$ as above, we have that $T \subseteq [N,N]$.
\end{lemma}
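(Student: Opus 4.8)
The plan is to recycle the commutator computation that already appears inside the proof of Proposition~\ref{prop:toral1}; the lemma is precisely that computation packaged as a free-standing statement. Recall that $T$ is generated by the diagonal matrices $D_1,\dots,D_{n-1}$, where $D_i$ has diagonal entries $1,\dots,1,\zeta,\zeta^{-1},1,\dots,1$ with the $\zeta$ in the $i$-th slot and $\zeta$ a fixed generator of $\bF_q^\times$. Hence it suffices to exhibit each $D_i$ as a commutator of two elements of $N$.

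First I would introduce, for each $1\le i\le n-1$, the element $\sigma_i\in N$ used in the proof of Proposition~\ref{prop:toral1}: the block matrix that is the identity except for the $2\times 2$ block $\bigl(\begin{smallmatrix}0&-1\\1&0\end{smallmatrix}\bigr)$ in rows and columns $i,i+1$. It has determinant one, and under the isomorphism $N/T\cong\CS_n$ it maps to the transposition $(i,\,i+1)$. The elementary point is that conjugating \emph{any} diagonal matrix by $\sigma_i$ merely interchanges its $i$-th and $(i{+}1)$-th diagonal entries (the sign disappears because the matrix being conjugated is diagonal), so $\sigma_i$ normalizes $T$.

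Next I would pick the partner $W\in T$: take $W=D_{i+1}$ when $i<n-1$ and $W=D_{i-1}$ when $i=n-1$. A one-line multiplication of diagonal matrices then gives $\sigma_i W\sigma_i^{-1}W^{-1}=D_i$. Since $\sigma_i,W\in N$, every generator $D_i$ of $T$ lies in $[N,N]$, whence $T\subseteq[N,N]$. The remaining hypotheses $p\mid q-1$ and $n<p$ are the standing assumptions of the section (ensuring $S$ is abelian, noncyclic, and contained in $T$) and are not otherwise needed for this identity.

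There is essentially no obstacle here; the only point requiring care is the bookkeeping of which diagonal entry lands where after the conjugation-and-multiplication, together with the boundary case $i=n-1$, which is exactly what forces the hypothesis $n\ge 3$ into the statement (for $i=n-1$ one needs $i-1$ to be a valid index, i.e.\ $n-2\ge 1$).
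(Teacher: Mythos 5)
Your proposal is correct and is essentially identical to the paper's proof: the paper proves this lemma inside the proof of Proposition~\ref{prop:toral1} (the lemma is stated afterwards with the remark that it ``is proved above''), using exactly the generators $D_i$, the transposition-like matrices $\sigma_i$, and the commutator identity $\sigma_i W \sigma_i^{-1} W^{-1} = D_i$ with the same choice of $W$. Your observation that the boundary case $i=n-1$ forces $n\geq 3$ is a correct and welcome clarification that the paper leaves implicit.
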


Next, let $L =(\GL(n-1,q) \times \GL(1,q)) \cap G$ be
the Levi subgroup of all matrices of determinant one that can be put
into block form of an upper left $(n-1)\times (n-1)$  block and a lower
right $1 \times 1$ block.

\begin{prop} \label{prop:toral2}
Assume that $p$ divides $q-1$.
Let $M$ be a $kG$-module with trivial Sylow restriction. 
Then the restriction of $M$ to $L$ has the form,
$M_{\downarrow L} \cong k_L \oplus \proj$.
\end{prop}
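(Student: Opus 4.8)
The plan is to use Proposition~\ref{prop:toral1} to reduce from the torus~$T$ to the Levi subgroup~$L$ by exhibiting~$T$ as an ``internal'' Sylow situation inside~$L$ and invoking a normalizer argument in the spirit of Proposition~\ref{prop:normal2}. More precisely, first I would note that, since $S\subseteq T\subseteq L$ and $S$ is a Sylow $p$-subgroup of~$G$, the module~$M$ is a trivial source $kL$-module with vertex contained in~$S$; hence $M_{\downarrow L}\cong Y\oplus\proj$ where $Y$ is an indecomposable trivial source module. The content of the proposition is that $Y$ must be the trivial module~$k_L$. Since $L\cong(\GL(n-1,q)\times\GL(1,q))\cap G$ and the $\GL(1,q)$-factor is a $p'$-group under our hypotheses (indeed $\GL(1,q)$ has order $q-1$, but the relevant point is that projectivity issues only see $p$-parts), the essential work is inside the derived-type factor, and one wants to quote that every indecomposable endotrivial module for (a central quotient/extension of) $\SL(n-1,q)$ with trivial Sylow restriction is one-dimensional, then lift.

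The cleaner route, and the one I expect the authors to take, is to argue directly at the level of~$L$ using Proposition~\ref{prop:toral1} together with the structure of $N_L(S)$. Since the Sylow $p$-subgroup $S$ of $L$ sits inside the maximal torus $T$, and $T$ is self-centralizing in the relevant sense, one has $N_L(S)\supseteq T$, and the Weyl group of~$L$ is $\CS_{n-1}$ acting in the obvious block. Restricting the equation $M_{\downarrow T}\cong k\oplus\proj$ from Proposition~\ref{prop:toral1}: because $M_{\downarrow L}\cong Y\oplus\proj$ with $Y$ a trivial source module having vertex inside~$S\le T$, the Green correspondent / Brauer construction forces $Y_{\downarrow T}$ to contain the trivial module with multiplicity one in its non-projective part, which already pins down $Y$ among trivial source $kL$-modules to be either $k_L$ or a nontrivial one-dimensional module. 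Then one eliminates the nontrivial one-dimensional modules: $L$ has a one-dimensional character given by the determinant of the $(n-1)\times(n-1)$ block (equivalently the inverse of the last diagonal entry), but restricted to~$S\subseteq T$ this character is trivial only when... — in fact, since $S$ is contained in $[N,N]$ by Lemma~\ref{lem:Tincommutator} applied to the derived subgroup $\SL(n-1,q)$-part (note $3\le n<p$ forces $2\le n-1<p$ as well, once $n\ge 3$), any one-dimensional $kL$-module with trivial Sylow restriction is trivial on a Sylow $p$-subgroup and hence, being one-dimensional of $p$-power-free image issues aside, must actually be the trivial module because $M$ itself has trivial Sylow restriction and the one-dimensional summand of $M_{\downarrow L}$ inherits this.

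Concretely, here is the chain I would write: (1) $S$ is a Sylow $p$-subgroup of~$L$, so $M_{\downarrow L}\cong Y\oplus\proj$ for an indecomposable trivial source module~$Y$ with $Y_{\downarrow S}\cong k\oplus\proj$, and $\dim Y$ is prime to~$p$. (2) By Proposition~\ref{prop:toral1}, $M_{\downarrow T}\cong k\oplus\proj$, hence $Y_{\downarrow T}\cong k\oplus\proj$ as well. (3) The Levi factor $L$ maps onto $L/O^{p'}([L,L])$-type quotient whose Sylow normalizer controls trivial source modules; using that $N_L(S)$ contains~$T$ and that $T\subseteq[N_L(S),N_L(S)]$ (the same $\sigma_i$-commutator computation as in Proposition~\ref{prop:toral1}, now carried out inside the $(n-1)\times(n-1)$ block, which is legitimate because $n-1\ge 2$), every one-dimensional $kL$-module is trivial on~$T$, hence on~$S$. (4) A trivial source $kL$-module $Y$ with vertex in~$S$ and $Y_{\downarrow S}\cong k\oplus\proj$ is determined by its Brauer quotient at~$S$, which is a one-dimensional projective $k[N_L(S)/S]$-module, i.e.\ a one-dimensional $k N_L(S)$-module inflated; by (3) this is trivial, so $Y=k_L$. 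Assembling, $M_{\downarrow L}\cong k_L\oplus\proj$.

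The main obstacle I anticipate is step~(4): cleanly justifying that the trivial source module $Y$ is pinned down by the restriction $Y_{\downarrow T}\cong k\oplus\proj$ (equivalently by its Brauer construction at~$S$) rather than merely knowing $Y_{\downarrow S}\cong k\oplus\proj$. One needs that the map from trivial source $kL$-modules with a fixed abelian vertex $S$ to one-dimensional $k[N_L(S)/S]$-modules is injective on the relevant part — this is standard (Green correspondence plus the theory of trivial source modules over a group with abelian Sylow, where the Brauer quotient functor is fully faithful enough for this purpose) but requires care to state so that the conclusion "$Y$ one-dimensional" genuinely follows, and then the commutator computation of step~(3) finishes it. A secondary subtlety is making sure the hypothesis $3\le n<p$ (so $S$ abelian noncyclic, and $n-1\ge 2$) is used where needed and that the $\GL(1,q)$-block, being a $p'$-group when... —actually $p\mid q-1$ so $\GL(1,q)$ does have order divisible by $p$— is handled: its contribution to any one-dimensional character restricted to $S$ must also be trivial, which again follows from $M$ having trivial Sylow restriction since the $S\cap\GL(1,q)$-part of the character is visible inside $M_{\downarrow S}\cong k\oplus\proj$.
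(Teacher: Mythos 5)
Your outline misses the short route the paper takes and, more importantly, contains a false claim in step (3). The paper's proof is immediate once one observes that $L$ has a \emph{nontrivial normal $p$-subgroup}: the element $z=\mathrm{diag}(\zeta,\dots,\zeta,\zeta^{1-n})$, with $\zeta$ a primitive $p$-th root of unity, lies in $L\setminus Z(G)$ (because $2<n<p$) and is central in $L$. Proposition~\ref{prop:normal-p} then gives, with no Brauer-construction or Green-correspondence machinery, that $M_{\downarrow L}\cong X\oplus\proj$ with $\dim X=1$. After that the paper identifies $L\cong\GL(n-1,q)$ via $\psi$, lists its one-dimensional characters as $\varphi_i=\Det^i$ for $i=0,\dots,q-2$, and observes that only $\varphi_0$ is trivial on $T$; combined with Proposition~\ref{prop:toral1} this forces $X\cong k_L$. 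Your step (4), which you flag as the main obstacle, is exactly the part that this normal-$p$-subgroup observation makes unnecessary.

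The more serious problem is your step (3): it is not true that every one-dimensional $kL$-module is trivial on $T$. One-dimensional $kL$-modules are inflated from $L/[L,L]$, not from $N_L(S)/[N_L(S),N_L(S)]$, so showing $T\subseteq[N_L(S),N_L(S)]$ (even if correct) does not make $T$ act trivially. Under the isomorphism $L\cong\GL(n-1,q)$, the torus $T$ of $G$ corresponds to the full diagonal torus of $\GL(n-1,q)$, and $[L,L]=\SL(n-1,q)$ does not contain that torus. Indeed, the paper's proof hinges on the fact that among the $q-1$ characters $\varphi_i=\Det^i$ of $L$, precisely one (namely $\varphi_0$) has $T$ in its kernel; your step (3) would say all of them do, which would contradict this and also make Proposition~\ref{prop:toral1} superfluous. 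So even if you repaired step (4), the argument as written would not close.
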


\begin{proof}
Suppose that $\zeta \in \bF_q$ is a primitive $p^{th}$ root of unity.
Let $z \in G$ be the diagonal matrix with diagonal entries
$\zeta, \zeta, \dots, \zeta, \zeta^{1-n}$. 
Note that $z\notin Z(G)$ because our assumption $2<n<p$ implies that
$1-n\not\equiv1\pmod p$. However the important point
is that $z$ generates a subgroup of order $p$ that is 
central in  $L$.  In particular, $L$ has a nontrivial normal 
$p$-subgroup. Hence, $M_{\downarrow L} \cong X \oplus \proj$
where $X$ has dimension one by Proposition \ref{prop:normal-p}. 
By Proposition \ref{prop:toral1}, it suffices to show that the restriction of $X$ to $T$ is the trivial
module if and only if $X$ is the trivial $kL$-module. 

We observe that the homomorphism $\psi:\GL(n-1,q) \longrightarrow
\SL(n,q)$, given by sending an invertible matrix $A$ to the matrix
\[
A \quad \mapsto  \quad \begin{pmatrix} A & 0 \\ 0 & \Det(A)^{-1} 
\end{pmatrix},
\] 
induces an isomorphism $\GL(n-1,q) \cong L$. The one dimensional
representations of $\GL(n-1,q)$ over $k$ are well known and are
the maps  
\[
\varphi_i: \GL(n-1,q) \ \longrightarrow \ k^\times  \quad \text{ given by }
A \mapsto \Det(A)^i
\] 
for $i = 0, 1, \dots, q-2$.
It is easy to see that the only $\varphi_i$ which has $T$ in its kernel
is $\varphi_0$. This finishes the proof. 
\end{proof}

Before proceeding to our main result, we record a known 
fact involving the numbers of cosets for an arbitrary group $H$ that will be needed later.  
We give a sketch of the proof for the sake of completeness.  

\begin{lemma} \label{lem:count1}
Suppose that $U$ and $V$ are subgroups of a finite group $H$ such 
that $U \subseteq V$. Then the number of $U$-$V$ double cosets 
$UxV$ in $H$ that are left cosets of $V$ is the same as the 
number of double cosets $UxV$ in $H$ such that $U \subseteq xVx^{-1}$.
This number is at least as large as $\vert V \vert / \vert N_V(U) \vert.$
\end{lemma}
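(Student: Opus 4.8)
The statement has two parts, and I would handle them separately. First, the equality of the two counts: I would set up a bijection between those double cosets $UxV$ that happen to equal a single left coset $xV$, and those double cosets $UxV$ satisfying $U \subseteq xVx^{-1}$. The key observation is that $UxV = xV$ exactly when $Ux \subseteq xV$, i.e. when $x^{-1}Ux \subseteq V$, i.e. when $U \subseteq xVx^{-1}$. So in fact these are literally the \emph{same} double cosets, not merely equinumerous — the condition ``$UxV$ is a left coset of $V$'' is \emph{equivalent} to the condition ``$U \subseteq xVx^{-1}$'' for the representative $x$, and since the latter condition is constant on the double coset $UxV$ (replacing $x$ by $uxv$ conjugates $V$ by $uxv$, and $U \subseteq uxVx^{-1}u^{-1}$ iff $u^{-1}Uu = U \subseteq xVx^{-1}$, using $u \in U$), the equality of counts is immediate.

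\textbf{The lower bound.} For the inequality, I would count, within the fixed coset $V$ (thought of as $x = 1$, $UxV = UV$... but more precisely I want to exhibit many \emph{distinct} double cosets of the required type). The cleanest approach: consider the action of $N_V(U)$ — or rather, I would count elements $x \in V$ with $U \subseteq xUx^{-1}$, equivalently (since $|xUx^{-1}| = |U|$) with $xUx^{-1} = U$, i.e. $x \in N_V(U)$. Hmm, but I want double cosets $UxV$ with $U \subseteq xVx^{-1}$, and taking $x \in V$ always gives $xVx^{-1} = V \supseteq U$, so \emph{every} double coset meeting $V$ is of the required type — but there's only one such double coset, namely $UV$ itself. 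That is the wrong count. Let me reconsider: I should instead produce double cosets $UxV$ with $x$ ranging so that the $xVx^{-1}$ are genuinely different conjugates of $V$ all containing $U$. The right bookkeeping: the number of double cosets $UxV$ with $U \subseteq xVx^{-1}$ equals the number of $U$-orbits on the set $\{\,$left cosets $yV$ with $U \subseteq yVy^{-1}\,\}$; but $U \subseteq yVy^{-1}$ with $yV$ a coset says $y^{-1}Uy \subseteq V$. I would instead count subgroups: the set of conjugates $xVx^{-1}$ containing $U$ is in bijection with $N_H(V)\backslash\{x : U \subseteq xVx^{-1}\}$, and... this is getting complicated, so let me commit to the argument I expect actually works.

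\textbf{Revised lower-bound argument.} I would count the relevant double cosets from the $V$-side. A double coset $UxV$ satisfies $U \subseteq xVx^{-1}$ iff $x^{-1}Ux \subseteq V$ iff $x^{-1} \in \{h : Uh \subseteq hV\}$... cleaner: such double cosets biject with $U$-orbits on $\{$cosets $Vy$ in $H$ with $yUy^{-1} \subseteq V\}$ under right multiplication by... I am overcomplicating. The actual point of the lemma is surely the following simple count, which I would present: consider the \emph{right} cosets $Vx$ such that $xUx^{-1} \subseteq V$. For each such $x$, $xUx^{-1}$ is a conjugate of $U$ inside $V$. Fix one, say $x = 1$, giving $U \le V$; then for any $v \in N_V(U)$... no.

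Let me just state the argument I'm confident in and flag it. I would prove: the number of double cosets $UxV$ with $U \subseteq xVx^{-1}$ is at least the number of left cosets of $N_V(U)$ in $V$, via the following. Such a double coset is represented by some $x$ with $x^{-1}Ux \le V$. Conversely each $v \in V$ gives... Actually the honest two-line proof: the number of $U$-$V$ double cosets of this type equals $\sum 1$ over distinct $UxV$; each contains at least $|U|\cdot|V|/|xVx^{-1} \cap U^{?}|$... I would instead use: these double cosets are exactly the fibers of the map sending $xV \mapsto UxV$ restricted to the $U$-invariant set $S = \{xV : U \subseteq xVx^{-1}\}$. The group $U$ acts on $S$ by left multiplication (well-defined since the condition is $U$-stable from the left: $u_0 x V$, need $U \subseteq u_0 x V x^{-1} u_0^{-1}$, and $u_0^{-1} U u_0 = U$). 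The orbits are the double cosets. Now $V \in S$ (taking $x=1$), and its $U$-orbit $\{uV\}$ has size $|U|/|U \cap V| = [U : U \cap V] = 1$ since $U \le V$ — so $\{V\}$ is a singleton orbit, giving only ONE double coset from this. So the lower bound cannot come from orbits near $V$.

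\textbf{Final committed plan.} The lower bound must instead count $xVx^{-1}$'s. Map $\{x : U \subseteq xVx^{-1}\} \to \{\text{subgroups } U \subseteq W,\ W \cong V \text{ by conjugacy}\}$ via $x \mapsto xVx^{-1}$; the fibers are cosets $x N_H(V)$. Within a \emph{single} double coset $UxV$, how many such conjugates $W = uxvV(uxv)^{-1} = ux V x^{-1} u^{-1}$ appear? As $u$ ranges over $U$: $uxVx^{-1}u^{-1}$, and these repeat with period $N_U(xVx^{-1}) \supseteq U$ (since $U \subseteq xVx^{-1}$ forces $U$ to normalize... no, $U \le xVx^{-1}$ doesn't mean $U$ normalizes it). Hmm — but if $U \le xVx^{-1} =: W$, then $uWu^{-1}$ for $u \in U \le W$ is just $W$ again! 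So each double coset $UxV$ contributes \emph{exactly one} conjugate $W$ of $V$ containing $U$. Hence the number of such double cosets equals the number of conjugates of $V$ in $H$ that contain $U$, which equals $[H : N_H(V)] \cdot (\text{fraction containing } U)$ — concretely, it equals the number of $x N_H(V)$ with $x^{-1} U x \le V$. To lower-bound this by $|V|/|N_V(U)|$: note $N_V(U)$ acts on the set $\{x N_H(V) : x^{-1}Ux \le V, \ x \in V N_H(V)\}$... cleaner still — for $v \in V$, $v^{-1} U v$ is always $\le V$, and $v^{-1} U v = v'^{-1} U v'$ iff $v' v^{-1} \in N_V(U)$... wait, that gives $N_V(U)$ on the \emph{left}: $vv'^{-1} \in N_V(U)$. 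So the conjugates $v^{-1}Uv$ of $U$ inside $V$, as $v$ ranges over $V$, number exactly $[V : N_V(U)] = |V|/|N_V(U)|$. Each such $v^{-1}Uv \le V$ gives $U \le v V v^{-1} = V$ — no that's trivially $V$ again!!

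I clearly need to be more careful and this is the main obstacle, so I will flag it rather than bluff: \emph{the crux is correctly identifying which combinatorial quantity equals $|V|/|N_V(U)|$ and matching it to a set of double cosets.} My best reconstruction: the map $v \mapsto UvV$ is constant ($=UV$), so that's not it; the right statement is about double cosets $V x U$ or about $U$ acting on cosets of $V$. I would write the proof as: \emph{The number of double cosets $UxV$ with $U \subseteq xVx^{-1}$ equals the number of $V$-conjugacy classes... }— and here I would reconstruct carefully from the application in Lemma~\ref{lem:count1}'s usage elsewhere in the paper, ultimately showing it equals $|\{ x \in V \}|$ modulo the stabilizer $N_V(U)$ acting appropriately, yielding exactly $\ge |V|/|N_V(U)|$.

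\begin{proof}[Proof plan summary]
The plan is: (1) show directly that $UxV$ is a single left coset of $V$ iff $x^{-1}Ux \subseteq V$ iff $U \subseteq xVx^{-1}$, and that this condition depends only on the double coset, so the two enumerations coincide; (2) observe that if $U \subseteq xVx^{-1}=:W$ then for $u\in U$, $uWu^{-1}=W$, so each such double coset determines a unique conjugate $W$ of $V$ containing $U$, and conversely; (3) bound the number of conjugates of $V$ containing $U$ below by $|V|/|N_V(U)|$ by letting $N_V(U)$ parametrize the relevant set of coset representatives. The main obstacle is step (3): getting the group-counting bookkeeping exactly right so that the stabilizer is precisely $N_V(U)$; I would do this by a careful orbit-counting argument on the $V$-translates of $W$, which I have not fully pinned down above and would need to execute with care.
\end{proof}
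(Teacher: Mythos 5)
Your handling of the first assertion matches the paper's: $UxV$ is a single left coset of $V$ iff $UxV = xV$ iff $x^{-1}Ux \subseteq V$ iff $U \subseteq xVx^{-1}$, and the condition is constant along a double coset. That part is correct.

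For the lower bound you have a genuine gap, which you flag, and step (2) of your plan summary is in fact false. You claim that the double cosets $UxV$ with $U \subseteq xVx^{-1}$ biject with the conjugates of $V$ containing $U$. The forward map is well-defined (the double coset is contained in a single coset $xN_H(V)$, so $W = xVx^{-1}$ is determined), but it is not injective: if $U \subseteq xVx^{-1}$ then for any $n \in N_H(V)$ one has $(xn)^{-1}U(xn) = n^{-1}(x^{-1}Ux)n \subseteq n^{-1}Vn = V$, so each of the $[N_H(V):V]$ distinct $V$-cosets inside $xN_H(V)$ is itself a fixed double coset, and all of them map to the same $W$. So your proposed count is off by the factor $[N_H(V):V]$. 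The idea you were missing is simpler and avoids conjugates of $V$ entirely: if $x \in N_H(U)$ then $x^{-1}Ux = U \subseteq V$, so $xV$ is automatically a fixed coset; and $x,y \in N_H(U)$ give the same coset iff $y^{-1}x \in N_H(U)\cap V = N_V(U)$, so the fixed cosets obtained this way number exactly $|N_H(U)|/|N_V(U)| = |N_H(U)\cdot V|/|V|$. That is the paper's argument. (One caveat: rewriting this quantity as $|V|/|N_V(U)|$, as the lemma's statement and the paper's final display do, requires $|N_H(U)| = |V|$; this holds in the application in Section 4 because there $N_G(U)\cong\GL(n-1,q)\cong L$, but it is not a general identity, so the bound you should actually derive is $|N_H(U)|/|N_V(U)|$.)
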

Another way to cast Lemma~\ref{lem:count1} is to say that the number of
left cosets $xV$ of $H/V$ that are stabilized by the action of $U$ (on
the left) is equal to the index of $N_V(U)$ in $V$.

\begin{proof}
Note that for $x$ in $H$, we have that $UxV$ is a left coset of $V$ in $H$
if and only if $UxV = xV$. This  is equivalent to saying that $x^{-1} Ux
\subseteq V$ or that $U \subseteq xVx^{-1}$. Now note that if 
$x \in N_H(U)$, then $UxV = xV$. Hence, the number of such double
cosets is at least equal to 
$$\frac{\vert N_H(U)\cdot V \vert}{\vert V \vert}=\frac{\vert
  V\vert}{\vert N_H(U)\cap V\vert}=\frac{\vert V \vert}{\vert N_V(U) \vert}$$
\end{proof}

Proposition~\ref{prop:toral2} shows
that if $p$ does not divide $n$ and if $M$ is an indecomposable
$kG$-module with trivial Sylow restriction then the restriction of  
$M$ to $L$ is the direct sum of a trivial 
module and a projective module. Because $L$ contains a Sylow $p$-subgroup
of $G$, it follows from standard arguments on relative projectivity
that $M$ is a direct summand of $k_L^{\uparrow G}$, the induction of 
the trivial $kL$-module to $G$. Now by the theory of 
Young modules for the general linear group (see
Theorem \ref{thm:youngbasic}), we have that 
\begin{equation} \label{eq:young}
k_\whl^{\uparrow \whg} \quad \cong \quad Y_{[n-1,1]} \ \oplus \ Y_{[n]}, 
\end{equation}
where $\whg = \GL(n,q)$ and $\whl = \GL(n-1,q) \times \GL(1,q)$
is the Levi subgroup in $\whg$. Indeed Theorem
\ref{thm:youngbasic} asserts that 
$k_\whl^{\uparrow \whg} \cong Y_{[n-1,1]} \oplus (Y_{[n]})^{\times m}$
for some $m\geq 1$, because $[n]$ is the only
partition of $n$ greater than $[n-1,1]$. Now $Y_{[n]}=k$ 
and so we get that $m=1$ by Frobenius reciprocity. 
Hence once we show that the restriction to $G$ of the 
indecomposable module $Y_{[n-1,1]}$ has no endotrivial direct summands we 
will be able to deduce the following result. 

\begin{thm} \label{thm:sl-pdiv}
Suppose that $G = \SL(n,q)$ with $2 < n < p$ and that $p$ divides 
$q-1$. Then $TT(G) = \{0\}.$
\end{thm}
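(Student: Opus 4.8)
The plan is to prove that the module $Y_{[n-1,1]}$, restricted to $G = \SL(n,q)$, has no endotrivial direct summand, since by the discussion preceding the theorem (the decomposition in \eqref{eq:young} together with relative projectivity and the fact that $L$ contains a Sylow $p$-subgroup of $G$), any indecomposable endotrivial $kG$-module $M$ with trivial Sylow restriction is a direct summand of $k_L^{\uparrow G}$, and hence is either the trivial module or a summand of $(Y_{[n-1,1]})_{\downarrow G}$. Ruling out the latter forces every indecomposable endotrivial module with trivial Sylow restriction to be trivial, which by Corollary~\ref{cor:normal1} (or rather the $\SL$-analogue reasoning: $\SL(n,q)$ is perfect, so its only one-dimensional module is trivial) is exactly the statement $TT(G) = \{0\}$, because an element of $TT(G)$ is precisely the class of an endotrivial module with trivial Sylow restriction.

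First I would compute the dimension of $Y_{[n-1,1]}$. From \eqref{eq:young}, $\dim Y_{[n-1,1]} = \dim k_{\whl}^{\uparrow \whg} - 1 = [\GL(n,q):\GL(n-1,q)\times\GL(1,q)] - 1$, which equals $(q^n-1)/(q-1) - 1 = q(q^{n-1}-1)/(q-1)$ — in particular it is divisible by $q$, hence a nonzero multiple of $p$'s complement, and in any case it is larger than $1$ and not congruent to $0$ or $\pm 1$ modulo $p$ in the relevant range (this needs a quick check using $2 < n < p$). More usefully, I would combine the dimension information with the restriction data already established: by Proposition~\ref{prop:toral2}, if $(Y_{[n-1,1]})_{\downarrow G}$ had an endotrivial summand $M'$, then $M'_{\downarrow L} \cong k_L \oplus \proj$. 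But $Y_{[n-1,1]}$ is, by construction, a summand of $k_{\whl}^{\uparrow\whg}$ over $\GL(n,q)$, and its further structure over $L$ should be controlled: the key point is to track what $(Y_{[n-1,1]})_{\downarrow L}$ looks like and show it cannot have a summand whose restriction to $L$ is $k_L \oplus \proj$ while simultaneously being endotrivial over $G$.

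The cleanest route is likely a counting/multiplicity argument. Since $M$ endotrivial with trivial Sylow restriction is a trivial-source module, its multiplicity as a summand of $k_L^{\uparrow G}$ can be computed via the Brauer construction / the formula counting fixed points of the relevant $p$-subgroup on cosets $G/L$, as in Lemma~\ref{lem:count1}. I would show that the vertex of $M$ must be a full Sylow $p$-subgroup $S$ of $G$ (because $M_{\downarrow S} \cong k \oplus \proj$ has the trivial module as a summand with vertex $S$ when $S$ is noncyclic — here we use that $S$ is abelian noncyclic, so $T(S) \cong \bZ$ and the trivial source endotrivial modules over $S$ are just $k \oplus \proj$). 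Then $M = k$ up to projectives would be forced if we can show that the trivial module $k = Y_{[n]}$ is the *only* trivial-source summand of $k_L^{\uparrow G}$ with vertex $S$; equivalently, that $Y_{[n-1,1]}$ has vertex strictly smaller than $S$, or has vertex $S$ but its Green correspondent over $N_G(S)$ is not one-dimensional. Here is where the Levi structure helps: $L = (\GL(n-1,q)\times\GL(1,q))\cap G$ contains a Sylow $p$-subgroup of $G$ (since $p \mid q-1$ and the extra $\GL(1,q)$ factor contributes nothing $p$-divisible beyond what sits in $\GL(n-1,q)$ already — wait, one must be careful, but $2<n<p$ keeps everything abelian and the Sylow of $\GL(n-1,q)$ is a Sylow of $G$), so $Y_{[n-1,1]}$ as a summand of $k_L^{\uparrow G}$ has vertex contained in $S$; the question is whether it equals $S$, and I would argue via Brauer quotients $Y_{[n-1,1]}(S)$ that if it did, the Green correspondent would be a summand of $k_L^{\uparrow N_G(S)}$ restricted appropriately, which by Proposition~\ref{prop:toral1} and Lemma~\ref{lem:Tincommutator} is forced to be trivial or zero, contradicting indecomposability of $Y_{[n-1,1]}\neq k$.

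**The main obstacle** I anticipate is controlling the vertex and Green correspondent of $Y_{[n-1,1]}$ precisely enough. The dimension is not a power-of-$p$ argument alone — $\dim Y_{[n-1,1]} = q(q^{n-1}-1)/(q-1)$ need not have $p$-part equal to $|S|$ — so one genuinely needs the trivial-source/Brauer-construction machinery rather than a naive dimension count. A delicate secondary point is verifying that the hypothesis $2 < n < p$ is used correctly everywhere: it guarantees $S$ abelian noncyclic (Lemma~\ref{lem:whenabel} with $e=1$), it gives $T \subseteq [N,N]$ (Lemma~\ref{lem:Tincommutator}), and it ensures $p \nmid n$ so that Proposition~\ref{prop:toral2} applies with $L = (\GL(n-1,q)\times\GL(1,q))\cap G$ having a central element $z$ of order $p$ that is not in $Z(G)$. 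If the vertex argument proves awkward, an alternative is to show directly that $\Hom_{kG}(k, (Y_{[n-1,1]})_{\downarrow G} \otimes (Y_{[n-1,1]})_{\downarrow G}^*)$ is too large — i.e., $(Y_{[n-1,1]})_{\downarrow G}$ is not endotrivial and neither is any of its summands — by restricting to $L$ and using that $(Y_{[n-1,1]})_{\downarrow L}$, once stripped of projectives, has dimension $> 1$, combined with the fact (from the analysis above) that any endotrivial summand would restrict to $k_L \oplus \proj$ on $L$; comparing the non-projective parts over $L$ then yields a contradiction.
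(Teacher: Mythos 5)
Your reduction is correct and matches the paper's: since $L$ contains a Sylow $p$-subgroup, any indecomposable endotrivial $kG$-module $M$ with trivial Sylow restriction is a summand of $k_L^{\uparrow G} \cong (Y_{[n-1,1]})_{\downarrow G} \oplus k$, so everything hinges on ruling out endotrivial summands in $(Y_{[n-1,1]})_{\downarrow G}$. But the arguments you propose for that key step are either wrong or left unexecuted, and that is where the real content of the theorem lies.

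Two specific problems. First, the dimension computation is incorrect: $[\GL(n,q):\GL(n-1,q)\times\GL(1,q)]$ is not $(q^n-1)/(q-1)$ (that is the index of the maximal parabolic, not the Levi); the correct index carries an extra factor of $q^{n-1}$. You admit this is a detour, but the error signals a Levi/parabolic confusion. Second, and more seriously, the vertex/Brauer-quotient route is speculative. You write that $Y_{[n-1,1]}$ should ``have vertex strictly smaller than $S$, or have vertex $S$ but its Green correspondent over $N_G(S)$ is not one-dimensional,'' but you give no reason either alternative holds, and the first is not obviously true for this Young module. The Green-correspondent alternative is closer to what is needed, but you never pin down the structural fact that makes it work, namely that $N = N_G(S)$ has exactly two one-dimensional modules (because $T \subseteq [N,N]$ and $N/T \cong \CS_n$), so $k_L^{\uparrow G}$ admits at most two indecomposable endotrivial summands and Clifford theory then forces $(Y_{[n-1,1]})_{\downarrow G}$ to be indecomposable and endotrivial if it has any endotrivial summand at all.

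What the paper actually does at this point is concrete and elementary, and you do not arrive at it: choose the cyclic subgroup $U = \langle \sigma \rangle$ of order $q-1$ (not a $p$-subgroup), where $\sigma$ is the diagonal matrix with entries $\zeta^{1-n},\zeta,\dots,\zeta$ and $\zeta$ generates $\bfq^\times$. Applying the Mackey formula to $(k_L^{\uparrow G})_{\downarrow U}$ and Lemma~\ref{lem:count1}, the number of $U$-$L$ double cosets represented in $N_G(U) \cong \GL(n-1,q)$ is at least $|L:N_L(U)| > 2$, so $(Y_{[n-1,1]})_{\downarrow U}$ has the trivial module $k_U$ as a direct summand with multiplicity at least $2$, hence is not endotrivial. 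Your last alternative (``restricting to $L$ and comparing non-projective parts'') is in the right spirit but restricting to all of $L$ makes the multiplicity bookkeeping much harder than restricting to the well-chosen cyclic subgroup $U$; you leave it at the level of ``then yields a contradiction'' without the actual count, so the proof is not complete.
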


\begin{proof}
Recall from Section~\ref{sec:GLback} that the assumption $2 < n < p$ says
that a Sylow $p$-subgroup $S$ of $G$ is abelian and not cyclic. 
More precisely, if $p^t$ is the highest power of $p$ dividing $q-1$, then
$S\cong (C_{p^t})^{n-1}$ is homocyclic. For convenience, let $\whg
= \GL(n, q)$ and $\whl\cong\GL(n-1,q)\times\GL(1,q)$
for the usual Levi subgroup and set $L=\whl\cap G$.  
Note that $L\cong \GL(n-1,q)$ has
index $q-1$ in $\whl$.
We choose $S\leq T$, so that $N_G(S)\leq N$. By
Proposition~\ref{prop:toral2}, the group $TT(G)$ is 
generated by the classes of the indecomposable endotrivial modules
that are direct summands of the induced module $k_L^{\uparrow G}$. 

We start by noticing that $k_L^{\uparrow G}\cong 
(k_\whl^{\uparrow \whg})_{\downarrow G}$. In order to see this,  apply the Mackey formula to
$$
(k_\whl^{\uparrow \whg})_{\downarrow G} \quad \cong \quad
\sum_{g \in [G\backslash\whg/\whl]} (k_{{}^g\whl \cap G})^{\uparrow G}
= k_L^{\uparrow G}.
$$
The last equality is a consequence of the facts that
$\whl G = \whg$ and there is only one $\whl$-$G$ 
double coset in $\whg$, represented by $g=1$.

From Equation~(\ref{eq:young}) 
$$k_\whl^{\uparrow \whg} \quad \cong \quad Y_{[n-1,1]}\oplus Y_{[n]}$$
so that $k_L^{\uparrow G}\cong (Y_{[n-1,1]})_{\downarrow G} \oplus k$.
We are left with the task of proving that 
$(Y_{[n-1,1]})_{\downarrow G}$ has no endotrivial direct
summands. 

Now let $U=\langle\sigma\rangle$ with 
$\sigma\in G$ the diagonal matrix with
entries $\zeta^{1-n},\zeta,\dots,\zeta$ where $\zeta$ is a primitive
$(q-1)$-st root of unity. 
Because $2<n<p$, we have that $\zeta\neq\zeta^{1-n}$
and the normalizer $N_L(U)$ of $U$ in $L$ is the
intersection of the Levi subgroup
$\GL(1,q)\times\GL(n-1,q)$ of $\whg$ with
$L$.  Hence, the intersection is isomorphic to 
$\GL(1,q)\times\GL(n-2,q)$, and
$$
N_G(U)=\big(\GL(1,q)\times\GL(n-1,q)\big)\cap G\cong
\GL(n-1,q).
$$
We claim that 
$(Y_{[n-1,1]})_{\downarrow U}$ is not an endotrivial
$kU$-module. The Mackey formula yields
$$
(k_L^{\uparrow G})_{\downarrow U}
\quad \cong \quad \sum_{g\in [U\backslash G/L]}
(k_{\ls gL\cap U})^{\uparrow U}.
$$
By Lemma~\ref{lem:count1} 
\[ 
\vert [U\backslash G/L] \vert \geq
\vert L:N_L(U) \vert >  2
\]
and so there are at least three trivial direct summands
in $(k_L^{\uparrow G})_{\downarrow U} \cong (Y_{[n-1,1]})_{\downarrow U}
\oplus k_{\downarrow U}$. This is because we can choose double coset
representatives for $[U\backslash G/L]$ in $N_G(U)$, 
which implies that $\ls
gL\cap U=U$ for each $g\in[U\backslash G/L]$. 
So the multiplicity of $k_U$ as a direct summand of
$(Y_{[n-1,1]})_{\downarrow U}$ is at least two, which proves that
$(Y_{[n-1,n]})_{\downarrow U}$ is not endotrivial. 

We claim that this is enough to prove the theorem. 
Suppose that $(Y_{[n-1,n]})_{\downarrow G}$
has an endotrivial direct summand.  
To show the claim note that $S$ is characteristic
in $T$ and hence normal in $N$. So any endotrivial $kN$-module 
having trivial Sylow restriction has dimension one. 

Since $N \cong T \rtimes \CS_n$ and $T \subseteq [N,N]$, there are exactly
two distinct one dimensional $kN$-modules, namely $k$ and 
the sign module.  Then $k_T^{\uparrow N}$ 
has exactly two one-dimensional direct summands. As a consequence, 
$k_T^{\uparrow G} = (k_T^{\uparrow N})^{\uparrow G}$ can have
at most two endotrivial direct summands (the Green correspondents
of the one dimensional modules). Now $k_L^{\uparrow G}$ is a 
direct summand of $k_T^{\uparrow G}$, so it can have at most
two endotrivial directs summand, one of which is $k$. 

By Clifford theory, $(Y_{[n-1,1]})_{\downarrow G}$ is 
either indecomposable or
the direct sum of conjugate modules. If the latter holds, then 
one of the summands is 
endotrivial and the conjugates are also. We know that 
$(Y_{[n-1,1]})_{\downarrow G}$ can have only one endotrivial
indecomposable direct summand and $(Y_{[n-1,1]})_{\downarrow G}$ must be
indecomposable and endotrivial. However, this is not possible as the
restriction to $U$ is not endotrivial.  
\end{proof}

%%%%%%%%%%%%%%%%%%%%  section    %%%%%%%%%%%%%%%%%%%%%%%%%

\section{The kernel of restriction} \label{sec:kernel}
In this section we present a method introduced by Balmer \cite{Bal} (see also \cite{Bal2})
for computing the kernel of the restriction map $T(G) \to T(H)$
in the case that $H \subseteq G$ is a subgroup that contains a 
Sylow $p$-subgroup of $G$. We also develop some consequences of 
these basic ideas that will be used in later sections. The  main definition is the 
following.

\begin{defi} Suppose that $H$ is a subgroup of $G$ that contains 
$S$, a Sylow $p$-subgroup of $G$. A function $u: G \to k^{\times}$ 
is a {\it weak $H$-homomorphism} if it satisfies the three conditions:
\begin{itemize}
\item[(a)] if $h \in H$, then $u(h) = 1$,
\item[(b)] for $g \in G$, if $\vert H \cap \ls gH \vert$ has order prime
to $p$, then  $u(g) = 1$, and 
\item[(c)] if $a, b \in G$ and $p$ divides $\vert H \cap \ls aH \cap 
\ls{ab}H \vert$ then $u(ab) = u(a)u(b)$. 
\end{itemize}
\end{defi}

Let $A(G,H)$ be the set of all weak $H$-homomorphisms. 
It is easy to show that $u(g^{-1}) = (u(g))^{-1}$ for every 
$u \in A(G,H)$ and all $g \in G$. Furthermore, one can verify that the set $A(G,H)$ is a
group under composition of functions. If $G$ has a nontrivial normal
$p$-subgroup, then every weak $H$-homomorphism is a homomorphism whose
kernel contains $H$. Balmer proved an amazing theorem which relates  
$A(G,H)$ and the kernel of the restriction map from $T(G)$ to $T(H)$. 

\begin{thm} \label{thm:balmer} \cite{Bal}
Suppose that $H$ is a subgroup of $G$ of index prime to $p$.
Then $A(G,H)$ is isomorphic to the kernel of the restriction
map $T(G) \to T(H)$. 
\end{thm}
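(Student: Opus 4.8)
The plan is to read Theorem~\ref{thm:balmer} as a descent statement for the trivial object of the stable category along the ``cover'' $H\le G$: by definition $[M]\in\ker(\res^G_H)$ means $M_{\downarrow H}\cong k$ in $\stmod(kH)$, so $M$ is a twisted form of $k$ for this cover, and a weak $H$-homomorphism is exactly a $1$-cocycle for the associated \v{C}ech complex, whose relevant pieces are the intersections $H\cap\ls gH$. First I would record the elementary but crucial fact that canonical trivialisations exist: since $kH$ is self-injective, a short exact sequence $0\to P\to M_{\downarrow H}\to k\to 0$ with $P$ projective splits, so $M_{\downarrow H}\cong k\oplus P$, and $\underline{\Hom}_{kH}(M_{\downarrow H},k)\cong\underline{\End}_{kH}(k)=k$ is one-dimensional; fix a generator $\alpha_H\colon M_{\downarrow H}\to k$. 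For each $g\in G$ the composite $\alpha_H\circ g^{-1}$ is $k(\ls gH)$-linear with projective kernel $g\cdot P$, hence is again a canonical trivialisation, now over $\ls gH$; and restricting any such trivialisation to a subgroup $D\le H$ with $p\mid|D|$ again gives a canonical trivialisation over $D$, since the kernel stays projective and $\underline{\Hom}_{kD}(M_{\downarrow D},k)\cong k$. Note also that $[G:H]$ coprime to $p$ makes $M_\diamond$ a summand of $k_H^{\uparrow G}$, hence a trivial source module with vertex a Sylow $p$-subgroup of $G$; this will matter for the construction of the inverse map.

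Next I would define $\Phi\colon\ker(\res^G_H)\to A(G,H)$ by $[M]\mapsto u_M$, where for $g\in G$ I put $u_M(g)=1$ if $p\nmid|H\cap\ls gH|$, and otherwise let $u_M(g)\in k^\times$ be the unique scalar with $\alpha_H\circ g^{-1}=u_M(g)\,\alpha_H$ in the one-dimensional space $\underline{\Hom}_{kD}(M_{\downarrow D},k)$, $D=H\cap\ls gH$. Axiom (a) is immediate because $\alpha_H$ is $kH$-linear, so $\alpha_H\circ h^{-1}=\alpha_H$ for $h\in H$; axiom (b) holds by definition; and axiom (c) is the cocycle identity: for $a,b$ with $p$ dividing $|E|$, $E=H\cap\ls aH\cap\ls{ab}H$, one writes $\alpha_H\circ(ab)^{-1}=(\alpha_H\circ b^{-1})\circ a^{-1}$, inserts $\alpha_H\circ a^{-1}$, and uses that ratios of canonical trivialisations are unchanged under restriction to smaller $p$-divisible subgroups and under conjugation (sending $E$ to $\ls{a^{-1}}E\subseteq H\cap\ls bH$); this yields $u_M(ab)=u_M(a)u_M(b)$. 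Finally, $\Phi$ is a group homomorphism: choosing the generator for $M\otimes N$ to be $\alpha_H^M\otimes\alpha_H^N$ gives $u_{M\otimes N}=u_M u_N$.

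For bijectivity, injectivity amounts to: if $u_M\equiv1$ then the canonical trivialisation $\alpha_H$ is (weakly) $G$-equivariant, i.e.\ it is a descent datum, and one checks that such data are effective --- $\alpha_H$ then patches to a stable isomorphism $M\cong k$ over $G$, so $[M]=0$ --- using $p\nmid[G:H]$ to average. For surjectivity, given $u\in A(G,H)$ I would construct $M_u$ by gluing: use $u$ to twist the tautological descent datum on $k$ and assemble, over the poset of nontrivial $p$-subgroups of $G$ together with their normalisers, a coherent system of one-dimensional local modules; by the reconstruction of a $p$-permutation module from its system of Brauer quotients this system is realised by a genuine trivial source $kG$-module $M_u$, and axioms (a)--(b) force $M_u$ to have full vertex and $(M_u)_{\downarrow H}\cong k\oplus\proj$, so $[M_u]\in\ker(\res^G_H)$; one then checks $u_{M_u}=u$ on local data.

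I expect the surjectivity step --- producing $M_u$ from the cocycle $u$ --- to be the main obstacle, since it needs a precise gluing theorem building a $kG$-module out of compatible local data, together with the observation that the three weak-homomorphism axioms are \emph{exactly} the coherence required, neither more nor less. The cocycle verification in the second step and the effectivity of descent in the third are the other technical points, and both are precisely where the hypothesis that $H$ has index prime to $p$ is used.
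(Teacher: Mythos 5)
The paper does not prove this theorem: it is quoted directly from Balmer \cite{Bal} and used as a black box, so there is no internal argument to compare against. Your sketch therefore has to stand or fall measured against Balmer's own proof.

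The forward map and injectivity are sound, and your account of them is essentially correct. Since $M_{\downarrow H}\cong k\oplus\proj$, the stable Hom from $M$ to $k$ is one-dimensional over $H$ and over every $p$-divisible subgroup $D\le H$; the scalar $u_M(g)$ comparing $\alpha_H\circ g^{-1}$ with $\alpha_H$ inside that one-dimensional space for $D=H\cap\ls gH$ is therefore well defined; your verification of axioms (a)--(c) for $u_M$ via compatible restrictions to the triple intersection and its $a^{-1}$-conjugate, the multiplicativity $u_{M\otimes N}=u_Mu_N$, and the injectivity argument by averaging $\alpha_H$ over $G/H$ (legitimate because $[G:H]$ is invertible in $k$) all hold up.

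The genuine gap is surjectivity, and you flag it yourself but misjudge how it can be closed. There is no ready-made ``reconstruction of a $p$-permutation module from its system of Brauer quotients'' that takes an abstractly prescribed compatible family of one-dimensional local modules and hands back a $kG$-module realizing them: deciding which local systems arise as Brauer quotients, and producing the module when they do, is precisely the content of Balmer's theorem and of the descent/gluing result in his companion paper \cite{Bal2}, so it cannot be invoked inside its own proof. Balmer instead constructs $M_u$ directly from the cocycle $u$ and the permutation module $k[G/H]$, and the three axioms for a weak $H$-homomorphism are exactly the coherence conditions that make that construction produce a well-defined $kG$-module whose restriction to $H$ is $k\oplus\proj$ and which maps back to $u$; the index hypothesis is used to normalize the relevant idempotent. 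That explicit construction is the nontrivial content of the theorem, and without supplying it your outline names the right landmarks but does not cross the central difficulty.
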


We use that theorem to establish two sets of criteria for the 
vanishing of the kernel of restriction on endotrivial modules. First
we need the following basic lemma. Part (a) is proven in \cite{Bal}. We 
include a proof for the convenience of the reader. 

\begin{lemma} \label{lem:kerbasics}
Let $H$ be a subgroup of $G$ having index prime to $p$. Suppose
that $u: G \to k^{\times}$ is a weak $H$-homomorphism. Then 
\begin{itemize}
\item[(a)] $u$ is constant on $H$-$H$ double cosets (meaning 
that if $HaH =HbH$ then $u(a) = u(b)$), 
\item[(b)] if $a, b \in N_G(Q)$ for some nontrivial $p$-subgroup $Q \subseteq H$,
then $u(ab) = u(a)u(b)$, and 
\item[( c)] if $g$ is in the commutator subgroup $[N,N]$ of $N = N_G(Q)$,
for some nontrivial $p$-subgroup $Q \subseteq H$, then $u(g) = 1$. 
\end{itemize}
\end{lemma}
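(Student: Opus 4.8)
The plan is to prove the three parts in the order \textbf{(b)}, \textbf{(c)}, \textbf{(a)}: part~(b) is a one-line application of condition~(c) in the definition of a weak $H$-homomorphism, part~(c) is then purely formal, and part~(a) requires only a short case analysis.

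For part~(b), let $Q\subseteq H$ be a nontrivial $p$-subgroup and $a,b\in N_G(Q)$. I would observe that $a^{-1}$, $a$ and $ab$ all lie in $N_G(Q)$ and hence normalize $Q$; so from $Q=\ls{a^{-1}}{Q}\subseteq H$ one gets $Q\subseteq aHa^{-1}=\ls aH$, and likewise $Q\subseteq\ls{ab}H$. Therefore $Q\subseteq H\cap\ls aH\cap\ls{ab}H$, so that $p$ divides $\vert H\cap\ls aH\cap\ls{ab}H\vert$, and condition~(c) in the definition gives $u(ab)=u(a)u(b)$. For part~(c), I would simply note that part~(b) says the restriction of $u$ to $N=N_G(Q)$ is a group homomorphism into the abelian group $k^{\times}$, and such a homomorphism is trivial on the commutator subgroup $[N,N]$; hence $u(g)=1$ whenever $g\in[N,N]$.

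For part~(a), the first step is to record the elementary identities, valid for all $h\in H$ and $g\in G$: since $hHh^{-1}=H$ we have $\ls{hg}H=h(\ls gH)h^{-1}$ and $\ls{gh}H=\ls gH$, so $\vert H\cap\ls{hg}H\vert=\vert H\cap\ls gH\vert=\vert H\cap\ls{gh}H\vert$. Next I would split into two cases according to whether $p$ divides $\vert H\cap\ls gH\vert$. If it does not, then condition~(b) in the definition forces $u$ to take the value $1$ at each of $g$, $hg$ and $gh$. If it does, then the factorizations $hg=h\cdot g$ and $gh=g\cdot h$ both satisfy the divisibility hypothesis of condition~(c) in the definition (by the identities just recorded, the relevant triple intersections again have order $\vert H\cap\ls gH\vert$), so $u(hg)=u(h)u(g)$ and $u(gh)=u(g)u(h)$, which equal $u(g)$ because $u(h)=1$ by condition~(a). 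In every case $u(hg)=u(g)=u(gh)$, i.e.\ $u$ is constant on each left and each right coset of $H$. Finally, if $HaH=HbH$, write $b=h_1ah_2$ with $h_1,h_2\in H$; then $u(b)=u\big(h_1(ah_2)\big)=u(ah_2)=u(a)$, which is part~(a).

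I do not expect a genuine obstacle; the argument is essentially bookkeeping. The one point demanding care is that the multiplicativity condition~(c) in the definition is conditional---it asserts $u(ab)=u(a)u(b)$ only when $p$ divides $\vert H\cap\ls aH\cap\ls{ab}H\vert$---so this hypothesis must be checked each time the condition is invoked, which is exactly why part~(a) needs the case distinction rather than a bare computation.
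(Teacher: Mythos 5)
Your proof is correct and follows essentially the same route as the paper's: part~(b) by noticing that $Q$ itself lies inside the triple intersection $H\cap\ls aH\cap\ls{ab}H$, part~(c) as a formal corollary, and part~(a) by a case split on whether $p$ divides $\vert H\cap\ls aH\vert$ and then using invariance of $u$ under left and right multiplication by elements of~$H$. Your write-up of (a) is, if anything, a bit cleaner than the paper's: you establish $u(hg)=u(g)=u(gh)$ for all $g\in G$, $h\in H$ in one stroke and then chain the two identities, whereas the paper verifies $u(ah_2)=u(a)$ and $u(h_1a)=u(a)$ directly for fixed $a,b$ and leaves a small bit of chaining to the reader.
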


\begin{proof}
Note that part (b) follows directly from the definition of a weak
$H$-homomor\-phism, and part (c) is a trivial consequence of (b). So 
only part (a) needs some proof. For this assume that $a, b \in G$
and that $HaH = HbH$, so that $b = h_1ah_2$ for some $h_1, h_2 \in H$. 
There are two cases to consider. 

First, if $\vert H \cap \ls aH \vert$ is prime to $p$ then so also 
is $\vert H \cap \ls{h_1ah_2}H\vert = \vert (H \cap \ls aH)^{h_1^{-1}}
\vert$. So the lemma holds in this case. 

Second, suppose that $\vert H \cap \ls aH \vert$ is divisible by 
$p$. Then clearly, $\vert H \cap \ls{h_2}H \cap \ls{ah_2}H \vert$
has order divisible by $p$. So $u(ah_2) = u(a)$. Also, there exists
a nontrivial $p$-subgroup $Q$ in $H \cap \ls aH$. Thus $\ls{h_1}Q$ 
is in $H \cap \ls{h_1}H \cap \ls{h_1a}H$. So $u(h_1a) = u(a)$, as
desired. 
\end{proof}

Our first application is almost an immediate consequence of the 
above lemma, in particular parts (a) and (c). 

\begin{prop} \label{prop:balmerapp1}
Let $H$ be a subgroup of $G$ having index relatively prime to $p$.
Suppose that every $H$-$H$ double coset $HxH$ in $G$ with $x\notin H$
has the property that either
\begin{itemize}
\item[(a)] $p$ does not divide $\vert H \cap \ls xH \vert$, or 
\item[(b)] $HxH = HaH$ for some $a$ in $[N_G(Q), N_G(Q)]$, the commutator
subgroup of $N_G(Q)$, for some nontrivial $p$-subgroup $Q$ of $H$. 
\end{itemize}
Then $A(G,H) = \{1\}$.
\end{prop}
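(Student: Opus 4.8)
The plan is to show that any weak $H$-homomorphism $u \in A(G,H)$ is identically $1$ by checking $u(x) = 1$ on a set of double coset representatives, since by Lemma~\ref{lem:kerbasics}(a) the value $u(x)$ depends only on the double coset $HxH$. First I would dispose of the trivial coset: condition (a) in the definition of a weak $H$-homomorphism gives $u(h) = 1$ for $h \in H$, so the double coset $HxH = H$ contributes nothing. Then, for a nontrivial double coset $HxH$ with $x \notin H$, I would split into the two cases allowed by the hypothesis.

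In case (a) of the proposition, $p$ does not divide $|H \cap \ls{x}H|$, so condition (b) in the definition of a weak $H$-homomorphism applies directly and yields $u(x) = 1$. In case (b), we are given $HxH = HaH$ for some $a \in [N_G(Q), N_G(Q)]$ where $Q$ is a nontrivial $p$-subgroup of $H$. By Lemma~\ref{lem:kerbasics}(a), $u(x) = u(a)$; and by Lemma~\ref{lem:kerbasics}(c), since $a$ lies in the commutator subgroup of $N = N_G(Q)$ for a nontrivial $p$-subgroup $Q \subseteq H$, we have $u(a) = 1$. Hence $u(x) = 1$ in this case as well. Since every double coset falls into one of these cases (or is $H$ itself), $u$ is identically $1$ on $G$, so $A(G,H) = \{1\}$.

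Since this is a direct combination of the definition of a weak $H$-homomorphism with parts (a) and (c) of Lemma~\ref{lem:kerbasics}, there is no real obstacle; the only point requiring a small amount of care is the reduction to double coset representatives, which is exactly what Lemma~\ref{lem:kerbasics}(a) provides. The result then also gives, via Balmer's Theorem~\ref{thm:balmer}, that the restriction map $T(G) \to T(H)$ is injective under the stated hypotheses, though that observation belongs after the proof rather than inside it.
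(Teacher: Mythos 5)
Your proof is correct and matches the paper's approach: the paper simply remarks that the proposition is an almost immediate consequence of Lemma~\ref{lem:kerbasics}, parts (a) and (c), which is precisely the argument you spell out.
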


The next application applies only to the case that $H= S$ 
is a Sylow $p$-subgroup of $G$.

\begin{prop} \label{prop:balmerapp2}
Let $S$ be a Sylow $p$-subgroup of $G$. 
Suppose that the following two conditions are satisfied.
\begin{itemize}
\item[(a)] For any element $u$ in $A(G,S)$, we have that $u(x) = 1$ whenever
$x \in N_G(Q)$ for some nontrivial subgroup $Q$ of $S$.
\item[(b)] If $Q_1$ and $Q_2$ are subgroups of $S$ that are conjugate in
$G$, then for some nontrivial subgroup $Q$ of $S$, there exists an
element $g \in N_G(Q)$ such that $\ls gQ_1 = Q_2$.
\end{itemize}
Then $A(G,S)= \{1\}$. 
\end{prop}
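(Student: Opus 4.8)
The plan is to take an arbitrary $u \in A(G,S)$ and show directly that $u \equiv 1$ using Balmer's description, Lemma~\ref{lem:kerbasics}, and the two hypotheses. By Theorem~\ref{thm:balmer}, since $S$ has index prime to $p$ in $G$, it suffices to prove $A(G,S) = \{1\}$. So fix $u \in A(G,S)$ and an element $g \in G$; I want $u(g) = 1$. By Lemma~\ref{lem:kerbasics}(a), $u$ is constant on $S$-$S$ double cosets, so I may replace $g$ by any convenient representative of $SgS$. The goal is therefore to find, in each double coset $SgS$, a representative on which $u$ is forced to be $1$.

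First I would reduce to the case where $p$ divides $|S \cap \ls gS|$: if this order is prime to $p$, then condition (b) in the definition of a weak $S$-homomorphism gives $u(g) = 1$ immediately. So assume $p \mid |S \cap \ls gS|$. Then $Q_1 := S$ and $Q_2 := \ls{g^{-1}}S \cap S$ — or more precisely, pick a nontrivial $p$-subgroup $P \subseteq S \cap \ls gS$; then both $P$ and $\ls{g^{-1}}P$ are $p$-subgroups of $G$ contained in $S$, hence conjugate in $G$ via $g$. Actually the cleaner route: take $P$ a nontrivial $p$-subgroup of $S \cap \ls gS$, so $P \subseteq S$ and $\ls{g^{-1}}P \subseteq S$; these two subgroups of $S$ are $G$-conjugate (by $g^{-1}$), so hypothesis (b) supplies a nontrivial $Q \subseteq S$ and an element $h \in N_G(Q)$ with $\ls h(\ls{g^{-1}}P) = P$, i.e.\ $hg^{-1} \in N_G(P)$ up to adjusting notation. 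The point I want to extract is that $g$ can be written, modulo left and right multiplication by elements normalizing nontrivial subgroups of $S$, as a product of elements each lying in some $N_G(Q)$ with $Q$ a nontrivial $p$-subgroup of $S$; then hypothesis (a) says $u$ kills each such factor, and Lemma~\ref{lem:kerbasics}(b) lets me multiply the values, giving $u(g) = 1$.

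Concretely, the argument I would run is: with $P \subseteq S \cap \ls gS$ nontrivial, write $g = g \cdot 1$ and observe $\ls{g^{-1}}P \subseteq S$; by (b) there is nontrivial $Q \subseteq S$ and $n \in N_G(Q)$ with $\ls nQ' = Q''$ relating $\ls{g^{-1}}P$ and $P$ inside their $N_G(Q)$-orbit, so that $gn^{-1}$ (for a suitable such $n$) normalizes a nontrivial $p$-subgroup; then $g = (gn^{-1})\cdot n$ with $gn^{-1} \in N_G(Q')$ for some nontrivial $Q' \subseteq S$ and $n \in N_G(Q)$. Both factors lie in normalizers of nontrivial $p$-subgroups of $S$, so hypothesis (a) gives $u(gn^{-1}) = u(n) = 1$. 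To conclude $u(g) = u(gn^{-1})u(n)$ I invoke Lemma~\ref{lem:kerbasics}(b), which requires $gn^{-1}$ and $n$ to lie in a common $N_G(Q)$ — this is where I must be careful, since a priori the two factors normalize possibly different subgroups. The fix is to run the reduction so that a single nontrivial $p$-subgroup $Q \subseteq S$ works for both: hypothesis (b) already produces one $Q$ and an $n \in N_G(Q)$, and one checks $gn^{-1}$ also lies in $N_G(Q)$ (because $gn^{-1}$ conjugates $Q$, a subgroup of $S$, to another subgroup of $S$ in a way controlled by the setup). Then Lemma~\ref{lem:kerbasics}(b) applies with that single $Q$, and $u(g) = 1$.

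I expect the main obstacle to be the bookkeeping in the last step: arranging that the two factors $gn^{-1}$ and $n$ genuinely normalize \emph{the same} nontrivial $p$-subgroup so that Lemma~\ref{lem:kerbasics}(b) applies, rather than merely each normalizing some nontrivial $p$-subgroup. This is a matter of unwinding hypothesis (b) carefully — it is stated precisely so that the conjugating element lives in a \emph{single} normalizer $N_G(Q)$ — and of verifying that the leftover factor automatically lands in that same $N_G(Q)$. Once that is pinned down, hypothesis (a) and Lemma~\ref{lem:kerbasics} finish the proof with no further computation.
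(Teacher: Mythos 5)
Your overall strategy is the right one, and it is the same as the paper's: factor the given element as a product of two elements each lying in a normalizer $N_G(\cdot)$ of a nontrivial $p$-subgroup of $S$, kill each factor by hypothesis~(a), and then transfer the value to the product. But the specific fix you propose for the final step is wrong. In your setup, hypothesis~(b) gives a nontrivial $Q \subseteq S$ and $n \in N_G(Q)$ with $\ls n(\ls{g^{-1}}P) = P$, and from this one gets $gn^{-1} \in N_G(P)$, \emph{not} $gn^{-1} \in N_G(Q)$. Indeed $\ls{gn^{-1}}Q = \ls g Q$, which equals $Q$ only if $g \in N_G(Q)$ --- precisely the case you have already disposed of. So the two factors $gn^{-1}$ and $n$ normalize \emph{different} nontrivial $p$-subgroups of $S$ in general, and Lemma~\ref{lem:kerbasics}(b) does not apply to them.

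The correct way to close the argument --- and the one the paper uses --- is to invoke condition~(c) of the definition of a weak $S$-homomorphism directly, rather than through Lemma~\ref{lem:kerbasics}(b). Take $a = gn^{-1}$ and $b = n$ so that $ab = g$. Then $P \subseteq S$ by choice, $P \subseteq \ls g S$ by choice, and $P = \ls{gn^{-1}}P \subseteq \ls{gn^{-1}}S$ since $gn^{-1} \in N_G(P)$ and $P \subseteq S$. Hence $P \subseteq S \cap \ls{gn^{-1}}S \cap \ls g S$, so this intersection has order divisible by $p$ and condition~(c) gives $u(g) = u(gn^{-1})\,u(n) = 1 \cdot 1 = 1$. (The paper phrases this as: with $x$ the element in question and $g$ the conjugator from hypothesis~(b), one shows $Q \subseteq S \cap \ls x S \cap \ls{xg}S$ and then $u(xg) = u(x)u(g)$; note there too the factor $x$ does \emph{not} normalize any designated nontrivial $p$-subgroup, so Lemma~\ref{lem:kerbasics}(b) cannot be used and condition~(c) is what is needed.)
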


\begin{proof}
Let $x$ be any element of $G$ such that $S \cap \ls xS = Q \neq \{1\}$.
Our purpose is to show for any $u \in A(G,S)$, that $u(x) = 1$.
This would prove the proposition.  There are exactly two things that can
happen in this situation. The first is that $x \in N_G(Q)$, in which
case $u(x) = 1$ by condition (a) of the hypothesis. Hence, we are left
to assume the second possibility that $\ls xQ \neq Q$.  Let
$R = \ls{x^{-1}}Q$. Then $\ls xR = Q$.  By condition (b) of the
hypothesis, there exists an element $g \in N_G(\whq)$ for some nontrivial
subgroup $\whq$ of $S$ such that $\ls gQ = R$. Note, that by condition
(a), $u(g) = 1$. Then we have that $R =\ls{g}Q = \ls{x^{-1}}Q$, so
that $xg \in N_G(Q)$. Thus, $u(xg) = 1$ by condition (a).
Now note that $S \cap \ls xS \cap \ls{xg}S \supseteq Q$
which is not trivial. So from the definition we have
that $u(xg) = u(x)u(g)$ and $u(x) = 1$ as desired.
\end{proof}

In special cases, we can relax the conditions 
somewhat as in the following.

\begin{cor} \label{cor:balmerapp3}
Suppose that the Sylow $p$-subgroup $S$ of $G$ is abelian. Let 
$E$ be the
subgroup of $S$ of all elements with order $p$ (or $1$).
Assume also that the following condition is satisfied.
\begin{itemize}
\item For any element $u$ in $A(G,S)$, we have that $u(x) = 1$ whenever
$x \in N_G(Q)$ for some nontrivial subgroup $Q \subseteq E$.
\end{itemize}
Then $A(G,S)= \{1\}$.
\end{cor}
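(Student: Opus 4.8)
The plan is to deduce this from Proposition~\ref{prop:balmerapp2} by verifying its two hypotheses under the weaker assumption stated here, using that $S$ is abelian. The only real issue is that the hypothesis now only mentions normalizers of nontrivial subgroups $Q$ of $E$ (the elements of order dividing $p$), whereas Proposition~\ref{prop:balmerapp2} refers to normalizers of arbitrary nontrivial $p$-subgroups of $S$; the key observation is that every nontrivial $p$-subgroup $Q$ of $S$ contains a nontrivial subgroup of $E$, namely its socle (the subgroup of elements of order dividing $p$ in $Q$), which is contained in $E$ since $S$ is abelian. Call it $Q_0 = \Omega_1(Q) \subseteq E$.

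First I would check hypothesis (a) of Proposition~\ref{prop:balmerapp2}. Let $u \in A(G,S)$ and suppose $x \in N_G(Q)$ for some nontrivial $p$-subgroup $Q \subseteq S$. Since $S$ is abelian, the characteristic subgroup $Q_0 = \Omega_1(Q)$ is normalized by $N_G(Q)$, so $x \in N_G(Q_0)$ with $Q_0$ a nontrivial subgroup of $E$. The hypothesis of the corollary then gives $u(x) = 1$, which is exactly hypothesis (a).

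Next I would check hypothesis (b). Suppose $Q_1, Q_2 \subseteq S$ are conjugate in $G$, say $\ls{g_0}Q_1 = Q_2$. Since $S$ is abelian, $S \subseteq C_G(Q_1) \subseteq N_G(Q_1)$ and likewise $S \subseteq N_G(Q_2)$, so both $Q_1$ and $Q_2$ are normal in $S$. We want an element of $N_G(Q)$, for some nontrivial $Q \subseteq S$, conjugating $Q_1$ to $Q_2$. Here the natural choice is $Q = Q_1 Q_2$ (or its socle), or more simply one argues as follows: since $Q_1$ and $Q_2$ are conjugate, their socles $\Omega_1(Q_1)$ and $\Omega_1(Q_2)$ are conjugate subgroups of $E$; but the conjugating element $g_0$ already satisfies $\ls{g_0}Q_1 = Q_2$, and we need it to lie in the normalizer of a common nontrivial $p$-subgroup. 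I would take $Q = Q_1$ itself: then $g_0 \in G$ satisfies $\ls{g_0}Q_1 = Q_2$, and I must arrange that the conjugating element normalizes $Q_1$ — which forces $Q_1 = Q_2$, so this is too crude. The correct move is instead to observe that hypothesis (a) of Proposition~\ref{prop:balmerapp2}, which we have just verified, is all that is used together with the weaker statement: for the proof of Proposition~\ref{prop:balmerapp2} to go through, one only needs, given $x$ with $S \cap \ls xS = Q \neq 1$ and $\ls xQ \neq Q$, an element $g \in N_G(\whq)$ with $\whq \subseteq E$ nontrivial such that $\ls gQ = R := \ls{x^{-1}}Q$. Since $Q$ and $R$ are conjugate subgroups of the abelian group $S$, so are their socles $\Omega_1(Q), \Omega_1(R) \subseteq E$.

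The remaining work — and the place I expect the argument to require care — is to produce, purely from the data that $Q$ and $R$ are $G$-conjugate nontrivial subgroups of the abelian $S$, an element of $N_G(\whq)$ for some nontrivial $\whq \subseteq E$ conjugating $Q$ to $R$. I would argue: since $Q$ and $R$ are both normal in $S$ (as $S$ is abelian), the product $\whq := \Omega_1(Q) \cap \Omega_1(R)$ need not be nontrivial, so instead one uses that any $G$-conjugacy between $p$-subgroups of an abelian Sylow subgroup can be realized inside $N_G(S)$ by Burnside's fusion theorem (an abelian Sylow $p$-subgroup controls its own fusion). Thus there is $g \in N_G(S)$ with $\ls gQ = R$; taking $\whq = S$ (or $\whq = E$, which is characteristic in $S$ hence normalized by $N_G(S)$) gives $g \in N_G(E)$ with $E \subseteq E$ nontrivial, and this $g$ satisfies the required property. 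This replaces hypothesis (b) of Proposition~\ref{prop:balmerapp2} and, combined with hypothesis (a) verified above, lets us invoke that proposition to conclude $A(G,S) = \{1\}$. The one technical point to be careful about is that Burnside's fusion argument applies because $S$ is abelian; this is precisely where the abelian hypothesis on $S$ is used, beyond its use in passing from $Q$ to $\Omega_1(Q)$.
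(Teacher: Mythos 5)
Your proof is correct and takes essentially the same route as the paper: you verify conditions (a) and (b) of Proposition~\ref{prop:balmerapp2}, establishing (a) by observing that for any nontrivial $Q \subseteq S$ the subgroup $Q\cap E$ of elements of order dividing $p$ in $Q$ is nontrivial, lies in $E$, and satisfies $N_G(Q)\subseteq N_G(Q\cap E)$, and establishing (b) via Burnside's fusion theorem for abelian Sylow $p$-subgroups with the auxiliary subgroup taken to be $S$ itself. The middle of your verification of (b) contains several false starts before you land on Burnside, but the final argument is the one the paper gives.
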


\begin{proof}
Observe that if $Q$ is any nontrivial subgroup of $S$, then the subgroup
$Q \cap E$, which contains all elements of order $p$ in $Q$,
has the property that $N_G(Q) \subseteq N_G(Q \cap E)$.  
Hence, the hypothesis implies condition (a) of Proposition
\ref{prop:balmerapp2}. Condition (b) is a consequence of the fusion 
theorem of Burnside (see \cite[Chapter 7, Theorem 1.1]{G}), which 
says that, because $S$ is abelian, $N_G(S)$ controls the fusion of
subgroups of $S$.
\end{proof}

We end this section with a proof of an inductive step in a scheme
to use the above results effectively. For notation, let 
$\Gamma_G$ denote the set of nontrivial
subgroups of $G$, and let $\Gamma_{H,p}$ denote the set of nontrivial
$p$-subgroups of $H$. 

\begin{prop} \label{prop:balmerapp4}
Let $H \subseteq G$ be a subgroup of index relatively prime to $p$.
Suppose that $\rho: \Gamma_{H,p} \to \Gamma_G$ is a function with 
the properties:
\begin{itemize}
\item[(a)] for every $Q \in \Gamma_{H,p}$, $\rho(Q) \subseteq N_G(Q)$, and 
\item[(b)] if $u \in A(G,H)$ and $Q \in \Gamma_{H,p}$, then $u(g) =1$ for 
all $g \in \rho(Q)$. 
\end{itemize}
Let $\widehat{\rho}: \Gamma_{H,p} \to \Gamma_G$  be defined by the 
rule 
\[
\widehat{\rho}(Q) \quad = \quad \langle N_G(Q) \cap \rho(Q^\prime)
\ \vert \ Q^\prime \in \Gamma_{H,p} \rangle,
\]
the subgroup generated by $\rho(Q)$ and all of the intersections
$N_G(Q) \cap \rho(Q^\prime)$ for $Q^\prime$ in $\Gamma_{H,p}$.
Then $\widehat{\rho}$ also satisfies the above properties. Namely,
\begin{itemize}
\item[(i)] for every $Q \in \Gamma_{H,p}$, we have that 
$\widehat{\rho}(Q) \subseteq N_G(Q)$, and 
\item[(ii)] if $u \in A(G,H)$ and $Q \in \Gamma_{H,p}$, 
then $u(g) =1$ for all $g \in \widehat{\rho}(Q)$. 
\end{itemize}
\end{prop}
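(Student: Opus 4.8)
The plan is to verify the two properties (i) and (ii) directly from the definition of $\widehat{\rho}$, using properties (a) and (b) for $\rho$ together with Lemma~\ref{lem:kerbasics}(b). Property (i) is the easier half: the subgroup $\widehat{\rho}(Q)$ is generated by $\rho(Q)$ (taking $Q' = Q$, noting $\rho(Q) \subseteq N_G(Q)$ by (a), so $N_G(Q) \cap \rho(Q) = \rho(Q)$) together with the intersections $N_G(Q) \cap \rho(Q')$ for $Q' \in \Gamma_{H,p}$. Each generator is contained in $N_G(Q)$ — the first by (a), and each intersection term trivially so. Since $N_G(Q)$ is a subgroup, the subgroup generated by all these generators is still inside $N_G(Q)$, giving (i).

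For property (ii), fix $u \in A(G,H)$ and $Q \in \Gamma_{H,p}$. I would first check that $u$ takes the value $1$ on each of the stated generators of $\widehat{\rho}(Q)$. For a generator $g \in \rho(Q')$ with $g \in N_G(Q)$: by property (b) applied to $Q'$ we already have $u(g) = 1$, so in particular this holds on $N_G(Q) \cap \rho(Q')$ and on $\rho(Q)$ itself. The key remaining point is to pass from ``$u = 1$ on the generators'' to ``$u = 1$ on the subgroup they generate.'' This is exactly where Lemma~\ref{lem:kerbasics}(b) enters: every generator of $\widehat{\rho}(Q)$ lies in $N_G(Q)$, and $Q$ is a nontrivial $p$-subgroup contained in $H$, so $u$ restricted to $N_G(Q)$ is a genuine homomorphism $N_G(Q) \to k^\times$. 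A homomorphism that vanishes (i.e.\ equals $1$) on a generating set of a subgroup vanishes on the whole subgroup. Applying this to the subgroup $\widehat{\rho}(Q) \subseteq N_G(Q)$ yields $u(g) = 1$ for all $g \in \widehat{\rho}(Q)$, which is (ii).

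The only mild subtlety, and the step I would be most careful about, is the logical order: one must establish (i) \emph{before} invoking Lemma~\ref{lem:kerbasics}(b) in the proof of (ii), since that lemma requires the relevant elements to lie in $N_G(Q)$ for a $p$-subgroup $Q \subseteq H$ in order to conclude multiplicativity of $u$ on them. Once (i) is in hand, the homomorphism property of $u$ on $N_G(Q)$ is immediate from Lemma~\ref{lem:kerbasics}(b), and the rest is the routine observation that a character is determined by its values on generators. There is no genuine obstacle here — the proposition is essentially a bookkeeping lemma packaging the fact that the ``bad set'' $\rho$ can be enlarged by taking normalizer-intersections and group closures without losing the property that elements of $A(G,H)$ kill it.
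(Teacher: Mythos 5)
Your proof is correct and follows essentially the same route as the paper: establish (i) from the construction, then observe that $u$ is multiplicative on $N_G(Q)$ by Lemma~\ref{lem:kerbasics}(b) and vanishes on all generators of $\widehat{\rho}(Q)$, hence on the whole subgroup. The paper phrases the second step by writing $g \in \widehat{\rho}(Q)$ explicitly as a product $g_1\cdots g_n$ of generators and applying multiplicativity repeatedly, which is the same argument you package as "$u$ restricted to $N_G(Q)$ is a homomorphism that kills a generating set."
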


\begin{proof}
That $\widehat{\rho}(Q) \subseteq N_G(Q)$ for all $Q \in \Gamma_{H,p}$
is obvious from the construction. To establish the second property, 
assume that $g \in \widehat{\rho}(Q)$ for some $Q$. Then $g = g_1 g_2 
\dots g_n$ for some $n$ and for $g_i \in N_G(Q) \cap \rho(Q_i)$ 
for some $Q_i \in \Gamma_{H,p}$. Thus, $u(g_i) = 1$ for all  $i =
1, \dots, n$. But because all of these elements are in $N_G(Q)$, 
we have that $u(g) = u(g_1) \cdots u(g_n) = 1$.
\end{proof}

\begin{rem} The function 
$\rho: \Gamma_{H,p} \to \Gamma_G$
given by $\rho(Q) = [N_G(Q),N_G(Q)]$ satisfies the 
hypothesis of the proposition by Lemma 
\ref{lem:kerbasics}(c).
\end{rem}

%%%%%%%%%%%%%%%%%%%%  section    %%%%%%%%%%%%%%%%%%%%%%%%%%

\section{Some information about $\GL(e,q)$} \label{sec:GLinfo}

The objective of this section is to set some notation for 
the remainder of the paper and to establish some elementary facts about
the group $G = \GL(e,q)$. We include sketches of some proofs for 
the sake of completeness. 

Throughout the section, let $\bF = \bfq$ and $\bK = \bF_{q^e}$, and assume Notation \ref{note:basic}. 
In addition, assume throughout the section that $e>1$. Notice that this 
assumption requires that $p >2$. Thus, $p$ divides $q^{e-1} + \dots + q + 1$.

Let $\bV$ denote the natural module for $G$. We can identify
$\bV$ with $\bK$, and we have an $\bF$-linear action of $\bK^\times$
on itself. Consequently, we have a $\bF$-linear embedding of $\bK^\times$
into $G$. There is an action of the Galois group of $\bK$
over $\bF$ by $\bF$-linear transformation. Consequently, the 
Galois group can also be embedded in $G$.

\begin{notation} \label{note:gens}
Let $w$ be a generator for $\bK^{\times}$, regarded as a subgroup of $G$.
Let $u \in \langle w \rangle$ be an element of order $p$. Let 
$g \in G$ denote a generator for the Galois group of $\bK$ 
over $\bF$, so that $gwg^{-1} = w^q$. Let $v$ be an element of 
$C_G(S)$ of determinant -1. (The existence of such an element 
$v$ is established in Lemma \ref{lem:glebasics}(c).)
\end{notation}

We assume that $u$ is the companion matrix of its minimal
polynomial over $\bfq$. The Sylow subgroup $S$ of $G$ is cyclic 
since the power of $p$ dividing $\vert G \vert$ is the same 
as that dividing $\vert \bK^\times\vert$ which is cyclic. Hence,
we can consider $S$ to be a subgroup of $\langle w \rangle =
\bK^\times \subseteq G$, and $u \in S$. Note, however, that 
$S$ need not be generated by $u$. For an example, let 
$G=\GL(5,3)$ with $p = 11$. In this case, $11^2$ divides $3^5-1$,
but $11$ does not divide $3^t-1$ for $t< 5$.

\begin{lemma} \label{lem:glebasics}
With the above notation, and assuming that $e > 1$, we have the following.
\begin{itemize}
\item[(a)] The centralizer of $u$ in $G$ is $\langle w \rangle$.
\item[(b)] The normalizer of $S$ in $G$ is generated by $w$ and $g$.
\item[(c)] The restriction of the determinant function $\Det: G
\to \bfq^{\times}$ to $\bK^\times \subseteq G$ is surjective.
Specifically, $\Det(w)= w^s$ where $s = 1 + q + \dots +q^{e-1}$ and
$w^s$ is a generator for the cyclic group $\bfq^\times$.
\item[(d)]  The commutator subgroup of $N_G(S)$ is generated by $w^{q-1}$.
It has index $e(q-1)$ in $N_G(S)$ and is equal to
the set of all elements of
$C_G(S)$ that have determinant 1.
\item[(e)] The determinant of $u$ is 1, and the determinant of $g$ is
$(-1)^{e-1}$.
\item[(f)] Suppose that $a$ is an $e \times e$ matrix over $\bfq$ such
that $au^\ell = u^ma$ for some $\ell, m$ such that $\ell$ is not divisible
by $p$. Then either $a$ is invertible or $a = 0$.
\end{itemize}
\end{lemma}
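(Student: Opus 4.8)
The plan is to work over the splitting field $\bK = \bF_{q^e}$ of the minimal polynomial of $u$. Recall that $u$ is the companion matrix of an irreducible polynomial of degree $e$ over $\bF = \bfq$, so over $\bK$ the matrix $u$ is conjugate to a diagonal matrix $\mathrm{diag}(\eta, \eta^q, \dots, \eta^{q^{e-1}})$ whose eigenvalues are the $e$ distinct Galois conjugates of a primitive $p^{\mathrm{th}}$ root of unity $\eta \in \bK$ (they are distinct and nonzero because $\gcd(p,q)=1$ and because $e$ is minimal with $p \mid q^e - 1$, so the Galois orbit of $\eta$ has full size $e$). After extending scalars, write the relation $a u^\ell = u^m a$ in the eigenbasis of $u$: if $A = (a_{ij})$ denotes $a$ expressed in this basis, then entrywise $a_{ij}(\eta^{q^{j-1}})^\ell = (\eta^{q^{i-1}})^m a_{ij}$, so $a_{ij} = 0$ unless $\eta^{\,q^{j-1}\ell} = \eta^{\,q^{i-1}m}$, i.e. unless $q^{j-1}\ell \equiv q^{i-1}m \pmod p$.

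The key point is that $\ell$ is not divisible by $p$, so $q^{j-1}\ell \not\equiv 0 \pmod p$; hence for each column $j$ there is \emph{at most one} row $i = \pi(j)$ for which $a_{ij}$ can be nonzero, namely the unique $i$ (if any) with $q^{i-1}m \equiv q^{j-1}\ell \pmod p$ (uniqueness of $i$ follows because the powers $q^0, q^1, \dots, q^{e-1}$ are pairwise incongruent mod $p$, again by minimality of $e$). Thus $A$ has at most one nonzero entry in each column, and similarly at most one in each row. If for some column $j$ no such $i$ exists then that entire column vanishes; and then the same divisibility bookkeeping, applied with the roles reversed, forces the whole matrix to be zero (if $m \equiv 0$ the relation reads $a u^\ell = a$, forcing $a = 0$ since $u^\ell \neq I$; if $m \not\equiv 0$, then either every column has its designated nonzero position and the map $j \mapsto \pi(j)$ is a bijection, or some column is zero and one checks the corresponding row is then empty too, propagating to $A = 0$). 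In the surviving case, $A$ is a monomial matrix — a permutation matrix with nonzero scalars — hence invertible, and therefore $a$ itself is invertible over $\bK$, so over $\bF$ as well since invertibility is detected by the nonvanishing of the determinant, a field-independent condition.

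The main obstacle is the careful casework at the end: one must rule out the possibility that $A$ is a genuine partial monomial matrix, neither full (invertible) nor empty (zero). The cleanest way to dispose of this is to observe that the permutation-type structure is governed by the $\bZ/p$-action: the set of columns (equivalently eigenvalues $\eta^{q^{j-1}}$) is permuted among itself by the relation, and since all the $q$-power exponents form a single orbit under multiplication by $q$ modulo $p$, the assignment $j \mapsto \pi(j)$, wherever defined, is forced to be the restriction of a single power map $x \mapsto x^{m\ell^{-1}}$ on the eigenvalues — which, being a bijection of the full eigenvalue set, cannot be a proper partial map. Hence $A$ is either everywhere zero or everywhere a unit times a permutation, completing the dichotomy.
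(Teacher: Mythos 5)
Your approach---diagonalizing $u$ over the extension field $\bK$ and analyzing the resulting monomial constraints on $A$ entry by entry---is genuinely different from the paper's proof, which is a two-line argument: the relation $au^\ell = u^m a$ shows that $u^\ell$ stabilizes the nullspace of $a$; since $\ell$ is prime to $p$ we have $\langle u^\ell\rangle=\langle u\rangle$, and by minimality of $e$ the group $\langle u\rangle$ has no nonzero invariant proper $\bfq$-subspace, so the nullspace is $0$ or everything.

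However, your proof has a gap in the final paragraph, and the ``cleanest way to dispose of it'' that you propose does not actually dispose of it. Once you know $\ell m^{-1}$ is a power of $q$ mod $p$, the map $\pi$ is a \emph{total} bijection (a cyclic shift), so there is nothing to say about it being a ``proper partial map''---the issue was never whether $\pi$ is defined on each column. The issue is whether the actual entries $A_{\pi(j),j}$ are nonzero for \emph{all} $j$ or for \emph{no} $j$, and nothing in your argument controls that: the linear constraints you derived only say $A$ is supported on the graph of $\pi$, and a matrix supported there with some, but not all, entries equal to zero is neither invertible nor zero. Indeed, over $\bK$ the dichotomy is simply false. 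Take $e=2$, $q=2$, $p=3$, $\ell=m=1$: in the eigenbasis the constraint forces $A$ to be diagonal, and $A=\operatorname{diag}(1,0)$ satisfies $Au=uA$ while being neither invertible nor zero. The hypothesis that $a$ has entries in $\bfq$ (not merely in $\bK$) is therefore essential, and your argument never uses it. To repair the proof along your lines one must exploit the Galois constraint: since $a=a^\sigma$ for the Frobenius $\sigma$ and $\sigma$ cyclically permutes the eigenbasis (up to diagonal scaling), one obtains $A^\sigma = D^{-1}C^{-1}ACD$ for a cyclic permutation matrix $C$ and a diagonal $D$; this forces $A_{\pi(j),j}=0 \Leftrightarrow A_{\pi(j+1),j+1}=0$, so the zero entries, if any, propagate around the whole cycle and kill $A$. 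After that correction your argument is valid, but it is considerably heavier than the paper's invariant-subspace observation.
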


\begin{proof}
We note that the natural module $\bV$ is an irreducible module
for the group of order $p$ generated by $u$. This is because
an element of order $p$ acts on no smaller $\bfq$-vector
space by the minimality of $e$. So by Schur's Lemma, the
commuting ring of the action is a field, and we know that this
field contains $\bK$. On the other hand the algebra of
$e\times e$ matrices over $\bfq$ contains no extensions of
degree greater than $e$. This proves (a).

For (b), suppose that $x \in G$ normalizes $S$.
Then conjugation by $x$ is an
automorphism on the subalgebra of $e\times e$ matrices
generated by $w$, which is isomorphic to $\bK$.  This requires
that $x$ act on $w$ as some power of $g$, or that $x$ be equal
to some power of $g$ times an element of the centralizer of
$w$ which is $\langle w \rangle$. 

The minimal polynomial of $w$ over $\bfq$ is the product
$f(X) \ =  \prod_{i=0}^{e-1} (X - w^{q^i}).$
That is, the roots of this polynomial are $w$ and its Galois conjugates.
The constant term of $f(X)$, is $(-1)^ew^s$. The matrix of
$w$ over $\bfq$ is conjugate to the companion matrix of its minimal
polynomial whose determinant is $(-1)^e$ times the constant term of
the polynomial. Hence, we have that $\Det(w) = w^s$, as asserted.
The order of $w$ is $\vert \bK^\times \vert = q^e-1 =
(q-1)s$. Consequently, $w^s$ has order $q-1$ and $w^s$ is a generator
for $\bfq^\times$. This proves (c).

If $x$  is in $\langle w \rangle = C_G(S)$, then
$gxg^{-1}x^{-1} = x^qx^{-1} = x^{q-1}$. Thus, $[N_G(S), N_G(S)]
= \langle w^{q-1} \rangle$ as asserted. The second statement in (d)
follows from (c).

The first statement of (e) is a consequence of the fact that $p$ does
not divide $q-1$.  For the second, we note that there is a basis
of $\bK$ as an $\bfq$-vector space that is permuted by the
Galois automorphism $g$.  Consequently, viewing the natural
module $\bV$ as $\bK$, with respect to this basis, the element
$g$ acts by a permutation matrix, representing an $e$-cycle.

For (f) we observe that $au^\ell = u^ma$ implies that $u^\ell$ stabilizes
the nullspace of $a$. However,  we know that the group
$\langle u^\ell \rangle = \langle u \rangle$
has no nontrivial $\bfq$ representations of degree less than $e$.
So either the nullspace of $a$ is zero or it is the entire
natural module.
\end{proof}

%%%%%%%%%%%%%%%%%%%%  section    %%%%%%%%%%%%%%%%%%%%%%%%%%

\section{Endotrivial modules for $\SL(2e, q)$}\label{sec:case-2e}
In this section we prove that $TT(G) = \{0\}$ for $G = \SL(2e, q)$,
where $e$ is the least integer such that $p$ divides $q^e-1$ and $p>2$. 
Our strategy is to show that the group of weak $S$-homomorphisms
$A(G,S)$ is trivial. This implies the kernel of the restriction map is trivial and 
the restriction map from $T(G) \to T(S)$ is a monomorphism where $S$ is a Sylow $p$-subgroup 
of $G$. In the proof we verify the condition in Corollary 
\ref{cor:balmerapp3}, by an inductive process described in 
Proposition \ref{prop:balmerapp4}. This requires a detailed 
analysis of the normalizers of the $p$-subgroups in 
$S$. 

For notational convenience, we express elements of $G$ as $2 \times 2$ block
matrices where the blocks have size $e\times e$, and keep the 
notation of the previous section. In particular, the elements
$w, u, g$ and $v$ are as in Notation \ref{note:gens}. Let $H$ denote the group $\GL(e,q)$. 
The Sylow $p$-subgroup $S$ of $G$ has order $p^{2t}$
where $p^t$ is the highest power of $p$ dividing $q^e-1$.
The group $S$ is the direct product of two cyclic groups of order $p^t$, and 
has a maximal elementary abelian subgroup $E$ of order $p^2$.  We can assume $E$ is generated by
\[
A \ \ = \begin{bmatrix} u & 0 \\ 0 &1 \end{bmatrix}
\quad \text{and} \quad
B \ \ = \begin{bmatrix} 1 & 0 \\ 0 &u \end{bmatrix}.
\]

The subgroups of order $p$ in $E$ are naturally divided 
into three types. We give representatives as 
\[
(1.) \ \ Q_1 = \langle A \rangle, \qquad
(2.) \ \ Q_2  = \langle AB \rangle, \quad \text{and} \quad
(3.) \ \ Q_3  = \langle AB^m \rangle, \qquad
\]
where in the third case, $m$ is an integer such that
$u^m$ is not conjugate to $u$ in $\GL(e, q)$. Note that in some
situations such as when $G = \SL(4, 5)$ and $p=3$, there is
no such $m$ and the third case does not exist. Note further
that $Q_1$ is conjugate to $\whq_1=\langle B \rangle$ and that
$Q_2$ is conjugate to $\langle AB^{q^\ell}\rangle$ 
for any $\ell$. Hence, we have
the following.

\begin{lemma} \label{lem:conjclass}
Every subgroup of order $p$ in $G$ is conjugate to $Q_1$
or to $Q_2$ or to a subgroup of $E$ of type $Q_3$. 
\end{lemma}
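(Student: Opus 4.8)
The plan is to prove Lemma~\ref{lem:conjclass} by combining the classification of elementary abelian $p$-subgroups from Lemma~\ref{lem:GLeleab} with the rational canonical form analysis of $p$-elements already developed for $\GL(e,q)$ in Section~\ref{sec:GLinfo}. First I would reduce to the case of a subgroup $Q = \langle x \rangle$ of order $p$ in $G = \SL(2e,q)$, where $x$ is a $p$-element. By Lemma~\ref{lem:GLeleab}, $Q$ is conjugate into $E_p$, which in this context (with $n = 2e$, so $r = 2$, $f = 0$) is precisely the elementary abelian group $E$ of order $p^2$ described above, generated by $A$ and $B$. So after conjugating we may assume $Q \subseteq E$, and $Q = \langle A^iB^j\rangle$ for some $i,j$ not both divisible by $p$.

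Next I would sort these cyclic subgroups of $E$ into the three announced types by exploiting the conjugation action available inside $G$. The key tool is that the block-diagonal subgroup $\GL(e,q)\times\GL(e,q)$ (or rather its intersection with $\SL(2e,q)$, which still contains enough determinant-$\pm1$ pairs) and block-permutation elements act on $E$. If $j \equiv 0 \pmod p$ then $Q = \langle A^i\rangle = \langle A\rangle = Q_1$ after replacing the generator; if $i \equiv 0$ then $Q = \langle B\rangle = \whq_1$, which as noted is conjugate to $Q_1$ via the block-swap matrix (which can be chosen with determinant $1$ in $\SL(2e,q)$). When both $i,j$ are prime to $p$, write $Q = \langle A B^m\rangle$ for a suitable $m$ prime to $p$ (absorbing a power into the generator). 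Using block-diagonal conjugation by $(a,b) \in \GL(e,q)^{\times 2}$ with $\det a \cdot \det b = 1$, we can replace $u^m$ by any $\GL(e,q)$-conjugate while keeping the first block $u$ fixed; and block-permutation together with the Galois element $g$ of Notation~\ref{note:gens} (which conjugates $u$ to $u^q$) lets us further normalize. The upshot is: $Q$ is conjugate to $\langle AB\rangle = Q_2$ precisely when $u^m$ is conjugate to $u$ in $\GL(e,q)$, i.e. when $m \equiv q^\ell \pmod{\text{ord}(u)}$ for some $\ell$; otherwise $Q$ falls into the third type $Q_3 = \langle AB^m\rangle$. That $\langle AB^{q^\ell}\rangle$ is conjugate to $Q_2$ is realized by conjugating the second block by the matrix realizing $u \mapsto u^{q^\ell}$ (a power of $g$), with the determinant compensated in the first block.

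I expect the main obstacle to be the bookkeeping of determinants: every conjugating element must be chosen to lie in $\SL(2e,q)$, not merely $\GL(2e,q)$. Since $\det u = 1$ by Lemma~\ref{lem:glebasics}(e), the block-swap matrix and powers of a suitably normalized $g$ can be arranged to have determinant $1$, and any determinant discrepancy introduced by conjugating one block can be cancelled by a scalar (or an $\SL(e,q)$-adjustment) in the other block without disturbing the conjugacy class of the relevant $u$-powers; I would state this once carefully and then use it freely. The only genuinely substantive input beyond this is the fact that conjugacy of $\langle u^m\rangle$ with $\langle u\rangle$ in $\GL(e,q)$ is governed by the Frobenius orbit $\{q^\ell\}$, which is exactly the criterion singled out in the definition of $Q_3$, so there is nothing further to verify there. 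Hence every order-$p$ subgroup of $G$ is $G$-conjugate to $Q_1$, $Q_2$, or a type-$Q_3$ subgroup of $E$, as claimed.
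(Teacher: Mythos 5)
Your proof is correct and takes essentially the same approach as the paper: reduce via Lemma~\ref{lem:GLeleab} to a subgroup of $E$, then sort the $p+1$ cyclic subgroups of $E$ into the three types using the block-swap, centralizer, and Galois conjugations that the paper records just before the lemma. The paper treats the lemma as an immediate corollary of that preceding discussion and leaves the $\SL$-versus-$\GL$ determinant adjustments implicit, whereas you make them explicit; that is an elaboration of the same argument, not a different route.
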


There is one particular subgroup that plays an important 
role in this development. Namely, 
\[
U = \left\{ \begin{bmatrix} a & 0 \\ 0 & b \end{bmatrix}
\vert \ a, b \in C_H(u) = \langle w \rangle, \ \Det(ab) = 1 \right\}.
\]
We note that $U$ is in the centralizer of every subgroup 
of $S$.

First we consider the centralizers of these subgroups. Recall that
$H = \GL(e,q)$.

\begin{prop}
The centralizers of the subgroups of order $p$ are given
as follows.
\begin{align*}
C_G(Q_1) \quad &= \quad \left\{
\begin{bmatrix} a & 0 \\ 0 & b \end{bmatrix} \ \vert \
a \in C_H(u), \ b \in H \ \text{ and } \ \Det(a)\Det(b) = 1
\right \}.\\
C_G(Q_2) \quad &= \quad \left\{ X =
\begin{bmatrix} a & b \\ c & d \end{bmatrix} \ \vert \
a, b, c, d \in C_H(u) \cup \{0\} \ \text{ and } \ \Det(X) = 1
\right \}.\\
C_G(Q_3) \quad &= \quad U.
\end{align*}
In the second case, $C_G(Q_2)$ is a subgroup of $\GL(2, \bK)$
that contains $\SL(2, \bK)$ as a subgroup of index $q-1$.
\end{prop}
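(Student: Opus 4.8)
The plan is to compute each centralizer directly: write a general element of $G$ in $e\times e$ block form $X=\begin{bmatrix} a & b \\ c & d\end{bmatrix}$, expand the commutation relation with the relevant generator, and solve block by block using the facts about $u$ recorded in Lemma~\ref{lem:glebasics}. First I would isolate two remarks that carry most of the argument. Since $u$ has order $p$ and acts irreducibly on $\bV=\bfq^e$, the matrix $u-1$ is invertible: its kernel is $\langle u\rangle$-invariant, hence is $0$ or all of $\bV$, and the latter forces $u=1$. And by Lemma~\ref{lem:glebasics}(f), with $\ell=m=1$, any $x\in M_e(\bfq)$ with $xu=ux$ is $0$ or invertible; hence the centralizer of $u$ in $M_e(\bfq)$ is $C_H(u)\cup\{0\}=\langle w\rangle\cup\{0\}$, a field isomorphic to $\bK$. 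More generally, if $b\neq 0$ and $ub=bu^m$ with $p\nmid m$, then Lemma~\ref{lem:glebasics}(f) makes $b$ invertible, and $b^{-1}ub=u^m$ shows $u$ and $u^m$ are conjugate in $H=\GL(e,q)$.

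For $Q_1=\langle A\rangle$ with $A=\begin{bmatrix} u & 0 \\ 0 & 1\end{bmatrix}$, expanding $AX=XA$ gives $ua=au$, $(u-1)b=0$, $c(u-1)=0$, with $d$ free; invertibility of $u-1$ yields $b=c=0$, and then $X=\begin{bmatrix} a & 0 \\ 0 & d\end{bmatrix}$ is invertible, so $a\in\langle w\rangle=C_H(u)$ and $d\in H$ is arbitrary, subject only to $\Det(a)\Det(d)=1$ since $X\in\SL(2e,q)$ --- exactly the stated form. For $Q_3=\langle AB^m\rangle$ with $AB^m=\begin{bmatrix} u & 0 \\ 0 & u^m\end{bmatrix}$, expanding the commutation relation gives $ua=au$, $ub=bu^m$, $u^mc=cu$, $u^md=du^m$; the diagonal relations put $a,d$ in the centralizers of $u$ and of $u^m$ in $M_e(\bfq)$, and since $u^m$ has order $p$ it too generates $\bK$ over $\bfq$, so both centralizers equal $\langle w\rangle\cup\{0\}$, while by the second remark the off-diagonal relations would force a conjugacy between $u$ and $u^m$ in $H$ unless $b=c=0$, which is impossible by the choice of $m$. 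Thus $X=\begin{bmatrix} a & 0 \\ 0 & d\end{bmatrix}$ with $a,d\in\langle w\rangle$ and $\Det(ad)=1$, that is, $X\in U$; the reverse inclusion $U\subseteq C_G(Q_3)$ is immediate.

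For $Q_2=\langle AB\rangle$ with $AB=uI_2$, centralizing $AB$ forces every block of $X$ to commute with $u$, hence to lie in $\langle w\rangle\cup\{0\}=C_H(u)\cup\{0\}$, and intersecting with $\SL(2e,q)$ gives the stated description. To see the $\GL(2,\bK)$ assertion I would view $\bV=\bfq^{2e}$ as a module over the field $\bfq[u]\cong\bK$: it is then a $2$-dimensional $\bK$-vector space, and the matrices commuting with $uI_2$ are precisely the $\bK$-linear endomorphisms of $\bV$, so $C_{\GL(2e,q)}(AB)=\GL(2,\bK)$. Consequently $C_G(Q_2)$ is the kernel of the restriction of $\Det=\det_{\bfq}$ to $\GL(2,\bK)$; using $\det_{\bfq}=N_{\bK/\bfq}\circ\det_{\bK}$ together with surjectivity of the norm $N_{\bK/\bfq}\colon\bK^\times\to\bfq^\times$ (and of $\det_{\bK}$), this kernel has index $q-1$ in $\GL(2,\bK)$ and visibly contains $\SL(2,\bK)$.

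The block-matrix bookkeeping is routine once the two preliminary remarks are in place. The step that needs genuine care is the $\GL(2,\bK)$ computation: one must keep straight which determinant --- over $\bfq$ or over $\bK$ --- is being used, and invoke $\det_{\bfq}=N_{\bK/\bfq}\circ\det_{\bK}$ correctly, since that formula together with surjectivity of the norm is precisely what pins down the index $q-1$ and makes the identification with a subgroup of $\GL(2,\bK)$ precise.
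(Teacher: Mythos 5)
Your proof is correct and follows essentially the same approach as the paper's, which declares the block computations to be ``straightforward exercises'' and highlights only the role of Lemma~\ref{lem:glebasics}(f) (each block is invertible or zero) and the identification of $C_{\GL(2e,q)}(Q_2)$ with $\GL(2,\bK)$. The one variant is in the index claim (which both you and the paper correctly read as $[\GL(2,\bK):C_G(Q_2)]=q-1$, with $\SL(2,\bK)\subseteq C_G(Q_2)$): you use $\det_{\bfq}=N_{\bK/\bfq}\circ\det_{\bK}$ together with surjectivity of the norm, while the paper instead notes that $\SL(2,\bK)$ is the commutator subgroup of $\GL(2,\bK)$ (so consists of $\bfq$-determinant-one matrices) and cites Lemma~\ref{lem:glebasics}(c) for the index; the two routes are equivalent in content and your norm computation is, if anything, the cleaner bookkeeping.
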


\begin{proof}
The proofs are straightforward exercises.  
One note is that in the statement for $C_G(Q_2)$ by Lemma
\ref{lem:glebasics}(f), each of $a, b, c, d$ is either invertible or
is zero. The embedding of $\GL(2,\bK)$, is defined by the
embedding of $\bK$ in $H$ as observed in the previous
section. The subgroup $\SL(2, \bK)$ is the commutator
subgroup of $\GL(2,\bK)$ and any commutator (as a
$2e \times 2e$ matrix) must have
determinant one. Consequently, $\SL(2,\bK)$ is contained
in $C_G(Q_2)$. The fact about the index is a consequence
of Lemma \ref{lem:glebasics}(c).
\end{proof}

\begin{cor} \label{cor:centS}
The centralizer of $S$ and of $E$ is $C_G(S) = C_G(E) = U$.
\end{cor}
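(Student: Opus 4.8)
The plan is to deduce this directly from the preceding proposition together with the structure of $S$ and $E$. Recall that $S$ is a direct product of two cyclic groups of order $p^t$ sitting inside $C_H(u)\times C_H(u)=\langle w\rangle\times\langle w\rangle$, and that $E=\langle A,B\rangle$ is its subgroup of elements of order dividing $p$. First I would observe the easy containments $C_G(S)\subseteq C_G(E)\subseteq C_G(Q_1)$, since $Q_1=\langle A\rangle\subseteq E\subseteq S$, and similarly $C_G(S)\subseteq C_G(E)\subseteq C_G(Q_2)$ with $Q_2=\langle AB\rangle$. So it suffices to show $C_G(Q_1)\cap C_G(Q_2)=U$, because then $U\subseteq C_G(S)\subseteq C_G(E)\subseteq C_G(Q_1)\cap C_G(Q_2)=U$, where the first inclusion is the already-noted fact that $U$ centralizes every subgroup of $S$.

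Next I would intersect the two descriptions from the proposition. An element of $C_G(Q_1)$ is block-diagonal $\mathrm{diag}(a,b)$ with $a\in C_H(u)=\langle w\rangle$, $b\in H$, and $\Det(a)\Det(b)=1$. For such a block-diagonal matrix to also lie in $C_G(Q_2)$, it must commute with $AB=\mathrm{diag}(u,u)$; block-diagonally this forces $au=ua$ (automatic) and $bu=ub$, i.e. $b\in C_H(u)=\langle w\rangle$. Thus $C_G(Q_1)\cap C_G(Q_2)$ consists exactly of the matrices $\mathrm{diag}(a,b)$ with $a,b\in\langle w\rangle$ and $\Det(ab)=1$, which is precisely $U$. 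This completes the argument.

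I do not anticipate a serious obstacle here; the only point requiring a little care is making sure the containment chain is set up so that no independent computation of $C_G(S)$ or $C_G(E)$ is needed — everything is squeezed between $U$ on the inside and the intersection $C_G(Q_1)\cap C_G(Q_2)$ on the outside. One should also note in passing that $Q_3$ plays no role: the corollary follows already from the $Q_1$ and $Q_2$ centralizers, consistent with the fact that a type-$Q_3$ subgroup need not exist.
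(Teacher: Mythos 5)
Your proof is correct and takes essentially the same approach as the paper's: the paper also obtains the result by intersecting centralizers of two order-$p$ subgroups, writing $C_G(E) = C_G(Q_1)\cap C_G(\whq_1)$ with $\whq_1=\langle B\rangle$. The only difference is your choice of second subgroup ($Q_2$ instead of $\whq_1$), which has the mild advantage of using only centralizers explicitly listed in the preceding proposition rather than appealing to the mirror-image description of $C_G(\whq_1)$.
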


\begin{proof}
The centralizer of $E$ is the intersection of the
centralizers of $Q_1$ and of $\whq_1 =
\langle B \rangle$.
\end{proof}

Now recall from the last section that $v \in C_H(u) =
\langle w \rangle $ is an
element with $\Det(v) = -1$, and $g \in N_H(\langle u \rangle)$
is an element that  acts on $\bK^\times = \langle w \rangle$,
as the Galois automorphism, $gug^{-1} = u^q$.

\begin{prop} \label{prop:e2normalizer}
The normalizers of the subgroups of order $p$ in $S$ are
given as follows.
\begin{itemize}
\item[(a)] $N_G(Q_1)$ is generated by $C_G(Q_1)$ and by the element
$\begin{bmatrix} g & 0 \\ 0 & v^{e-1} \end{bmatrix}$.
\item[(b)] $N_G(Q_2)$ is generated by $C_G(Q_2)$ and by the element
$\begin{bmatrix} g & 0 \\ 0 & g \end{bmatrix}.$
\item[(c)]
$N_G(Q_3)$ is generated by $C_G(Q_3)$ and by the element
$\begin{bmatrix} g & 0 \\ 0 & g \end{bmatrix},$
and possibly also an element of the form
$\begin{bmatrix} 0 & g^i \\ g^jv^{e(i+j+1)} & 0 \end{bmatrix}$.
\end{itemize}
Also, $N_G(S)$ is generated by $C_G(S)= U$ and
the elements
\[
\begin{bmatrix} g & 0 \\ 0 & v^{e-1} \end{bmatrix}
\qquad \text{and} \qquad
\begin{bmatrix} 0 & 1 \\  -1 & 0 \end{bmatrix}.
\]
\end{prop}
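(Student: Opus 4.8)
The strategy for all four claims is the same. If $Q\le G$ has order $p$, conjugation gives a homomorphism $N_G(Q)\to\Aut(Q)\cong(\bZ/p)^{\times}$ with kernel $C_G(Q)$, so $N_G(Q)/C_G(Q)$ is cyclic; hence to describe $N_G(Q)$ it is enough to compute the image of this map and to exhibit elements of $N_G(Q)$ (necessarily of determinant $1$) realizing generators of that image. For $S$ itself, conjugation gives $N_G(S)\to\Aut(S)$ with kernel $C_G(S)=U$ by Corollary~\ref{cor:centS}, and the same remark applies. Throughout I use the $2\times 2$ block notation, Lemma~\ref{lem:glebasics}, and one auxiliary fact: $\langle u\rangle$ is the unique subgroup of order $p$ of the cyclic Sylow $p$-subgroup of $\GL(e,q)$, and $C_{\GL(e,q)}(u)=\langle w\rangle$ is abelian with that Sylow subgroup as its Sylow $p$-subgroup, so $N_{\GL(e,q)}(\langle u\rangle)$ is the normalizer of a Sylow $p$-subgroup of $\GL(e,q)$, namely $\langle w,g\rangle$ by Lemma~\ref{lem:glebasics}(b); since $w$ centralizes $u$ and $g$ sends $u\mapsto u^q$, the image of this normalizer in $\Aut(\langle u\rangle)$ is the cyclic subgroup $\langle\bar q\rangle$ of order $e$ ($=$ the multiplicative order of $q$ mod $p$).

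For $Q_1=\langle A\rangle$, write $x$ in blocks $a,b,c,d$ with $xAx^{-1}=A^j$; the relation $xA=A^jx$ forces $au=u^ja$, $(u^j-I)b=0$ and $c(u-I)=0$. Since $u-I$, and (as $A^j$ has order $p$) also $u^j-I$, are invertible — the minimal polynomial of $u$ being a degree-$e$ irreducible factor of $1+X+\dots+X^{p-1}$, nonzero at $1$ — we get $b=c=0$, so $x=\operatorname{diag}(a,d)$ with $aua^{-1}=u^j$; hence $a\in N_{\GL(e,q)}(\langle u\rangle)$ and $j\in\langle\bar q\rangle$. Conversely $\sigma:=\operatorname{diag}(g,v^{e-1})$ has determinant $(-1)^{e-1}(-1)^{e-1}=1$ (Lemma~\ref{lem:glebasics}(e), Notation~\ref{note:gens}) and conjugates $A$ to $A^q$, so the image is all of $\langle\bar q\rangle$; and if $xAx^{-1}=A^{q^i}$ then $\sigma^{-i}x\in C_G(A)=C_G(Q_1)$. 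This proves (a). For $Q_2=\langle AB\rangle$, the relation $x(AB)x^{-1}=(AB)^j$ forces each block $y$ of $x$ to satisfy $yu=u^jy$, so by Lemma~\ref{lem:glebasics}(f) each block is invertible or $0$, and $x\ne 0$ gives $j\in\langle\bar q\rangle$; since $\tau:=\operatorname{diag}(g,g)$ has determinant $1$ and conjugates $AB$ to $(AB)^q$, and $\tau^{-i}x\in C_G(Q_2)$ whenever $x(AB)x^{-1}=(AB)^{q^i}$, we obtain (b).

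For $Q_3=\langle AB^m\rangle$ (with $p\nmid m$ and $u^m$ not conjugate to $u$ in $\GL(e,q)$), the relation $x(AB^m)x^{-1}=(AB^m)^j$ gives $au=u^ja$, $bu^m=u^jb$, $cu=u^{jm}c$, $du^m=u^{jm}d$, so each block is invertible or $0$ by Lemma~\ref{lem:glebasics}(f). If $a\ne 0$ or $d\ne 0$ then $j\in\langle\bar q\rangle$; then $b\ne 0$ would make $u^m$ conjugate to $u^j$, hence to $u$, and $c\ne 0$ would give $m\in\langle\bar q\rangle$, each contrary to the choice of $m$, so $x=\operatorname{diag}(a,d)$ and $\tau$ handles it as in (b). The only other case is $a=d=0$ with $b,c$ invertible and $bu^mb^{-1}=u^j$, $cuc^{-1}=u^{jm}$, which forces $m^2\in\langle\bar q\rangle$; writing $m^2\equiv q^s\pmod p$, a block matrix with zero diagonal blocks, $(1,2)$-block $g^i$, and $(2,1)$-block of the form $g^{\,\ell}$ times a scalar from $\langle w\rangle$ with $\ell\equiv i+s\pmod e$ normalizes $Q_3$, and the $\langle w\rangle$-factor can be chosen to make the determinant $1$ since $\Det$ is onto $\bfq^{\times}$ on $\langle w\rangle$ (Lemma~\ref{lem:glebasics}(c),(e)) — an element of the shape $\begin{pmatrix}0&g^i\\ g^jv^{e(i+j+1)}&0\end{pmatrix}$ (entries $e\times e$ blocks) as in the statement. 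Together with $C_G(Q_3)$ and $\tau$ these generate $N_G(Q_3)$, their images generating $\langle\bar q,\bar m\rangle\le(\bZ/p)^{\times}$ (which equals $\langle\bar q\rangle$, with no antidiagonal element occurring, precisely when $m^2\notin\langle\bar q\rangle$). This proves (c).

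Finally, $E$ is $\Omega_1(S)$, hence characteristic in $S$, while $S=Z(S)\le C_G(S)$ is characteristic in the abelian group $C_G(E)=U$ (Corollary~\ref{cor:centS}), so $N_G(S)=N_G(E)$ and $N_G(E)/U$ embeds into $\Aut(E)\cong\GL(2,\bF_p)$. Any $x\in N_G(E)$ permutes the $p+1$ order-$p$ subgroups of $E$ and sends each to a $G$-conjugate; since for $p\nmid m$ the matrix $AB^m$ has characteristic polynomial a product of two degree-$e$ irreducible factors of $1+X+\dots+X^{p-1}$, which cannot equal $(X-1)^e$ times the minimal polynomial of $u$ (the characteristic polynomial of $A$), the only order-$p$ subgroups of $E$ that are $G$-conjugate to $Q_1=\langle A\rangle$ are $\langle A\rangle$ and $\langle B\rangle$. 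Hence $x$ preserves $\{\langle A\rangle,\langle B\rangle\}$, so its image in $\Aut(E)$ is diagonal or antidiagonal in the basis $A,B$; after composing with $\rho:=\begin{pmatrix}0&1\\ -1&0\end{pmatrix}$ (entries $e\times e$ blocks), which has determinant $1$ and interchanges $A$ and $B$, we may assume it is diagonal, i.e.\ $x\in N_G(\langle A\rangle)\cap N_G(\langle B\rangle)$, and the argument of (a) applied to each factor forces the induced automorphism to be $\operatorname{diag}(q^i,q^j)$. Thus the image of $N_G(E)$ is $\langle\bar q\rangle\wr\bZ/2$, generated by the images of $\sigma$ and $\rho$, so $N_G(S)=N_G(E)=\langle U,\sigma,\rho\rangle$. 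The step I expect to be the main obstacle is combining the determinant bookkeeping (keeping each exhibited coset representative inside $\SL(2e,q)$) with the reduction $N_G(S)=N_G(E)$ and the conjugacy argument ruling out $\langle A\rangle\sim_G\langle AB^m\rangle$.
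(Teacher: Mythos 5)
Your proof is correct, and it arrives at the same conclusions as the paper; the overall approach (compute $N_G(Q)/C_G(Q)$ as a subgroup of $\Aut(Q)\cong(\bZ/p)^\times$ via block computations, then exhibit determinant-one elements realizing generators) is essentially that of the paper for parts (b) and (c). Two places where you diverge are worth noting. For part (a) the paper argues indirectly: $N_G(Q_1)$ normalizes $[C_G(Q_1),C_G(Q_1)]=\{\operatorname{diag}(1,b):b\in\SL(e,q)\}$, whose normalizer is the Levi subgroup, so $N_G(Q_1)$ is block diagonal. You instead do the block computation directly, observing that $u-I$ and $u^j-I$ are invertible because the minimal polynomial of $u$ is a degree-$e$ irreducible factor of $1+X+\dots+X^{p-1}$ and so does not vanish at $1$; this is more elementary and equally valid. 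For the $N_G(S)$ clause, the paper gives only a one-sentence sketch (``the only subgroup of $S$ conjugate to $Q_1$ is $\whq_1$''), whereas you supply the missing reduction $N_G(S)=N_G(E)$ (using $E=\Omega_1(S)$ characteristic in $S$ and $S$ characteristic in the abelian group $U=C_G(E)$) and justify the non-conjugacy of $\langle A\rangle$ and $\langle AB^m\rangle$ by comparing characteristic polynomials; this fills a real gap in the paper's exposition. Your remark that the antidiagonal case in (c) occurs exactly when $m^2\in\langle\bar q\rangle$ makes explicit a condition the paper leaves implicit, and your determinant bookkeeping (including the adjustment by a scalar in $\langle w\rangle$, which is necessary since the displayed matrix $\left[\begin{smallmatrix}0&g^i\\ g^jv^{e(i+j+1)}&0\end{smallmatrix}\right]$ has determinant $(-1)^{i+j}$) is more careful than what appears in the paper.
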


\begin{proof}
The normalizer $N_G(Q_1)$ also normalizes the commutator subgroup of
the centralizer $C_G(Q_1)$ which has the form
\[
[C_G(Q_1), C_G(Q_1)]  \quad = \quad 
\left\{ \begin{bmatrix} 1 & 0 \\ 0& b
\end{bmatrix} \ \vert \ b \in \SL(e,q) \right \}.
\]
The normalizer of  $[C_G(Q_1), C_G(Q_1)]$ is the Levi subgroup of
two $e \times e$ blocks. Then the normalizer of $Q_1$ is the
subgroup consisting of all elements of the form
\[
\begin{bmatrix} a & 0 \\ 0 & b \end{bmatrix}
\]
such that $a \in N_H(\langle u \rangle)$, $b \in H$ and $\Det(a)\Det(b)
= 1$ (recall that $H = \GL(e,q)$). This proves (a).

Any element in $N_G(Q_2)$ has the form
\[
\left[ \begin{matrix} a& b \\ c & d \end{matrix}\right]
\]
and satisfies
\[
\left[ \begin{matrix}a& b \\ c & d \end{matrix}\right]
\begin{bmatrix} u& 0 \\ 0 & u \end{bmatrix} \ = \
\begin{bmatrix} u^\tau& 0 \\ 0 & u^\tau \end{bmatrix}
\begin{bmatrix} a& b \\ c & d \end{bmatrix}
\]
for some value of $\tau$. Thus, $xu= u^\tau x$
for $x = a, b, c \ \text{ or } \ d$, and there exists some $\nu$ such that
\[
\begin{bmatrix} a& b \\ c & d \end{bmatrix}
= \begin{bmatrix} g^\nu& 0 \\ 0 & g^\nu \end{bmatrix} X
\]
where $X \in C_G(Q_2)$. So, we have shown (b).

Similarly, in case (c), for some $\tau$, we have
\[
\left[ \begin{matrix}a& b \\ c & d \end{matrix}\right]
\begin{bmatrix} u& 0 \\ 0 & u^r \end{bmatrix} \ = \
\begin{bmatrix} u^\tau & 0 \\ 0 & u^{r\tau} \end{bmatrix}
\begin{bmatrix} a& b \\ c & d \end{bmatrix}.
\]
If $\tau \equiv q^\nu$ modulo $p$, then $a = a^\prime g^\nu$, 
$d = d^\prime g^\nu$ for some
$a^\prime, d^\prime \in C_H(u)$. In this case it would
be necessary that $b = c = 0$. Otherwise, if $\tau$ is not
a power of $q$ modulo $p$, then we would have that
$a = d = 0$,  $cuc^{-1} = u^{r\tau}$, $bu^rb^{-1} = u^\tau$.
In this case, $r\tau$ would be a power of $q$
modulo $p$, and we would be in the other possibility.

The proof for the normalizer of $S$ is similar. That is,
it is clear from the structure of the centralizers that
the only subgroup of $S$ that is conjugate to $Q_1$
is $\whq_1 = \langle B \rangle$. So anything in the
normalizer of $S$ must either normalize both $Q_1$
and $\whq_1$ or interchange them.
\end{proof}

\begin{rem}
We should emphasize that the exceptional case (c) in the
proposition actually occurs. For example, if $G = \SL(10,2)$ and $p=31$,
then $gug^{-1} =u^2$ and we calculate
\[
\left[ \begin{matrix} 0& g^2 \\ 1 & 0 \end{matrix}\right]
\begin{bmatrix} u& 0 \\ 0 & u^{15} \end{bmatrix}
\begin{bmatrix} 0& 1 \\ g^{-2} & 0 \end{bmatrix} \ = \
\begin{bmatrix} u^{-2}& 0 \\ 0 & u \end{bmatrix} \ = \
\begin{bmatrix} u& 0 \\ 0 & u^{15} \end{bmatrix}^{-2}.
\]
\end{rem}

At this point we define functions, $\rho_0, \rho_1, 
\rho_2: \Gamma_{S,p} \to \Gamma_G$ as in Proposition
\ref{prop:balmerapp4}. We start with $\rho_0(Q) 
= [N_G(Q), N_G(Q)]$ for any nontrivial subgroup $Q$ of $S$. 
Then we set $\rho_1 = \widehat{\rho}_0$ as constructed
in that proposition. So $\rho_1(Q)$ is the subgroup of 
$N_G(Q)$ generated by all intersections 
$N_G(Q) \cap \rho_0(Q^\prime)$ for $Q^\prime \in \Gamma_{S,p}$.
Then inductively, let $\rho_2 = \widehat{\rho}_1$ 
constructed by the same process. 

\begin{prop} \label{prop:rhozero}
The groups $\rho_0(Q)$, for $Q$ in $E$ are given as follows.
\begin{itemize}
\item[(a)] $\rho_0(Q_1)$ has the form
\[
\rho_0(Q_1) \ = \ \left\{
\begin{bmatrix} a & 0 \\ 0 & b \end{bmatrix}
\vert \ a \in \langle w^{q-1} \rangle \ \text{ and } \ b \in \SL(e,q)
\right\}. 
\]
\item[(b)] The subgroup $\rho_0(Q_2)$ is generated by $\SL(2, \bK)$
and elements of the form $X^{[q]}X^{-1}$, for $X \in C_G(Q_2)$ 
where
\[
\begin{bmatrix} a & b \\ c & d \end{bmatrix}^{[q]} \ = \
\begin{bmatrix} a^q & b^q \\ c^q & d^q \end{bmatrix}.
\]
\item[(c)] The subgroup $\rho_0(Q_3)$ is a subgroup 
of $U$.
\item[(d)] $\rho_0(S)$ is generated by a subgroup
of $U$ and the element $\begin{bmatrix} g & 0 \\ 0 & g^{-1}
\end{bmatrix}$.
\end{itemize}
\end{prop}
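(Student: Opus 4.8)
The plan is to compute the commutator subgroup $\rho_0(Q)=[N_G(Q),N_G(Q)]$ directly in each of the four cases, starting from the explicit generators of the normalizers listed in Proposition~\ref{prop:e2normalizer} together with the centralizer descriptions obtained just above. In every case $N_G(Q)$ is generated by $C_G(Q)$ and one or two extra ``Galois-type'' generators $\gamma_1,\dots,\gamma_k$, so $\rho_0(Q)$ is the normal closure in $N_G(Q)$ of $[C_G(Q),C_G(Q)]$ together with the commutators $[\gamma_i,x]$ ($x\in C_G(Q)$) and $[\gamma_i,\gamma_j]$. Thus the two recurring tasks are: (i) identify $[C_G(Q),C_G(Q)]$, and (ii) evaluate how the $\gamma_i$ conjugate elements of $C_G(Q)$ — for which the only inputs needed are that $g$ acts on $\langle w\rangle\cong\bK^{\times}$ as the Frobenius $w\mapsto w^{q}$ and that $v\in\langle w\rangle$ (Notation~\ref{note:gens}), so conjugation by the $\gamma_i$ permutes the two $e\times e$ blocks and raises their entries to $q$-th powers.

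For (a), I would first observe that $N_H(\langle u\rangle)=N_H(S)$: an element normalizing $\langle u\rangle$ normalizes $C_H(u)=\langle w\rangle$ by Lemma~\ref{lem:glebasics}(a), hence acts as a field automorphism of $\bK=\bF_q(w)$ and lies in $\langle w,g\rangle=N_H(S)$ by Lemma~\ref{lem:glebasics}(b), while conversely $N_H(S)$ normalizes its unique subgroup of order $p$. So $[N_H(\langle u\rangle),N_H(\langle u\rangle)]=\langle w^{q-1}\rangle$ by Lemma~\ref{lem:glebasics}(d). Since $\Det\colon H\to\bF_q^{\times}$ is onto, both projections of $N_G(Q_1)=\{\,\mathrm{diag}(a,b)\mid a\in N_H(\langle u\rangle),\ b\in H,\ \Det(a)\Det(b)=1\,\}$ (onto $N_H(\langle u\rangle)$ and onto $H$) are surjective, so $\rho_0(Q_1)$ surjects onto $\langle w^{q-1}\rangle$ and onto $[H,H]=\SL(e,q)$; as every commutator of block-diagonal matrices is block-diagonal with blocks of determinant one, this pins down $\rho_0(Q_1)=\langle w^{q-1}\rangle\times\SL(e,q)$ once the reverse inclusion is checked on commutators of the form $[\mathrm{diag}(1,b_1),\mathrm{diag}(1,b_2)]$ and $[\mathrm{diag}(g,b),\mathrm{diag}(w,b')]$ (with $b,b'$ chosen to satisfy the determinant condition, and using $[g,w]=w^{q-1}$).

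For (b)--(d) I would apply the normal-closure description above. In (b) the centralizer $C_G(Q_2)$ contains $\SL(2,\bK)$ with abelian quotient of order $q-1$, so $[C_G(Q_2),C_G(Q_2)]=\SL(2,\bK)$ (using that $\SL(2,\bK)$ is perfect, valid since $|\bK|=q^{e}\geq 4$), while the single extra generator $\mathrm{diag}(g,g)$ acts on $C_G(Q_2)\subseteq\GL(2,\bK)$ as the entrywise Frobenius $X\mapsto X^{[q]}$; since here $N_G(Q_2)$ is a semidirect product of $C_G(Q_2)$ by a cyclic group, $\rho_0(Q_2)=\SL(2,\bK)\cdot\langle X^{[q]}X^{-1}:X\in C_G(Q_2)\rangle$, which is the asserted form. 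In (c), $C_G(Q_3)=U$ is abelian, and a short computation shows conjugation of $\mathrm{diag}(a,b)\in U$ by $\mathrm{diag}(g,g)$ or by the possible anti-diagonal generator lands again in $U$ — the point being that the condition $\Det(a)\Det(b)=1$ is preserved because $q-1$ divides $q^{i}-q^{j}$ — so every generating commutator lies in the normal (since $U\trianglelefteq N_G(Q_3)$) subgroup $U$, whence $\rho_0(Q_3)\subseteq U$. In (d), $C_G(S)=U$ by Corollary~\ref{cor:centS}, conjugation by each of the two generators of $N_G(S)$ maps $U$ into $U$, and their mutual commutator evaluates to $\mathrm{diag}(g,g^{-1})$ times an element of $U$; together these give the stated description.

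The mathematics here is elementary; the main obstacle is the bookkeeping — verifying the \emph{reverse} inclusions in (a) and (d) while respecting the determinant-one constraints, and handling the $v$-factors and the exceptional anti-diagonal generator of $N_G(Q_3)$ in (c). I expect the cleanest presentation first records the general fact that for $A\rtimes\langle\gamma\rangle$ with $\gamma$ acting on $A$ one has $[A\rtimes\langle\gamma\rangle,A\rtimes\langle\gamma\rangle]=[A,A]\cdot\langle [\gamma,a]:a\in A\rangle$, and otherwise uses the normal-closure description; the only genuinely case-specific inputs are the identification $[N_H(\langle u\rangle),N_H(\langle u\rangle)]=\langle w^{q-1}\rangle$ and the perfectness of $\SL(e,q)$ and $\SL(2,\bK)$, with the finitely many small exceptions being either excluded by the standing hypotheses or harmless, since only the inclusion $\SL(2,\bK)\subseteq\rho_0(Q_2)$ is used in the sequel.
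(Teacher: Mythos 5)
Your plan matches what the paper's one-line proof ("direct calculation using Proposition~\ref{prop:e2normalizer}") must have in mind: read off $N_G(Q)$ from that proposition, identify $[C_G(Q),C_G(Q)]$, and track the extra Galois/permutation generators through the commutator using the semidirect-product identity $[A\rtimes\langle\gamma\rangle,A\rtimes\langle\gamma\rangle]=[A,A]\cdot\langle\,^\gamma a\,a^{-1}:a\in A\rangle$. Parts (a)--(c) are handled essentially correctly (for (c) the cleanest shortcut, which you nearly state, is that $N_G(Q_3)/C_G(Q_3)$ embeds in $\Aut(Q_3)$, hence is cyclic, so $[N_G(Q_3),N_G(Q_3)]\subseteq U$ with no need to inspect individual commutators).

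Two places deserve more care than the proposal gives them. First, in (a) you invoke $[H,H]=\SL(e,q)$ and perfectness of $\SL(e,q)$; for $e>1$ the only exception consistent with the running hypotheses is $(e,q)=(2,2)$, $p=3$ (so $G=\SL(4,2)$), where $\GL(2,2)=\SL(2,2)\cong\fS_3$ is not perfect — one should either exclude this case explicitly or check directly that it does not affect the later argument (only the inclusion $\rho_0(Q_1)\supseteq\{1\}\times\SL(e,q)$ is used, and that fails there). Second, and more substantively, in (d) you conclude from $[\gamma_1,\gamma_2]=\mathrm{diag}(g,g^{-1})\cdot u$ with $u=\mathrm{diag}\bigl(v^{-(e-1)},v^{q(e-1)}\bigr)\in U$ that "together these give the stated description," but the statement asserts $\mathrm{diag}(g,g^{-1})\in\rho_0(S)$, which requires showing that this particular $u$ lies in $\rho_0(S)\cap U$. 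This does not follow merely from $U\trianglelefteq N_G(S)$: one must exhibit $u$ as a product of commutators, e.g.\ using $[\gamma_1,\mathrm{diag}(a,b)]=\mathrm{diag}(a^{q-1},1)$ and $[\gamma_2,\mathrm{diag}(a,b)]=\mathrm{diag}(ba^{-1},ab^{-1})$, and the available exponents involve a parity/square constraint coming from $\Det(v)=-1$ when $q$ is odd. So the final step of (d) is a genuine calculation, not a formal consequence of the two observations you list, and the proposal leaves it open.
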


\begin{proof}
This is a direct calculation using
Proposition~\ref{prop:e2normalizer}. 
\end{proof}

\begin{prop}
For every nontrivial subgroup $Q$ of $E$ we have that
$U \subseteq \rho_1(Q)$.
\end{prop}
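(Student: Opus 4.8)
The plan is to show that the subgroup $U$ lies inside $\rho_1(Q)$ for every nontrivial $Q \subseteq E$ by exhibiting, for each conjugacy type of $Q$, a $Q'$ (drawn from the representatives $Q_1$, $\whq_1$, $Q_2$, $Q_3$ and from $S$ and $E$ themselves) such that $N_G(Q) \cap \rho_0(Q')$ captures enough of $U$, and then using the fact that $U$ is generated by such pieces. Recall from Proposition~\ref{prop:rhozero} that $\rho_0(Q_1)$ contains $\{\operatorname{diag}(a,b) \mid a \in \langle w^{q-1}\rangle,\ b \in \SL(e,q)\}$, and similarly $\rho_0(\whq_1)$ contains $\{\operatorname{diag}(a,b) \mid a \in \SL(e,q),\ b \in \langle w^{q-1}\rangle\}$. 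Since $U$ consists of the matrices $\operatorname{diag}(a,b)$ with $a,b \in \langle w\rangle$ and $\Det(ab)=1$, and since every such pair $(a,b)$ can be written as a product of an element of $\rho_0(Q_1)$ with an element of $\rho_0(\whq_1)$ (using Lemma~\ref{lem:glebasics}(c),(d) to arrange the determinants, noting $\langle w^{q-1}\rangle$ is exactly the determinant-one elements of $C_H(u)=\langle w\rangle$, together with $\SL(e,q) \supseteq \langle w^{q-1}\rangle$ as diagonal blocks can absorb the complementary factor), we get $U \subseteq \langle \rho_0(Q_1), \rho_0(\whq_1)\rangle$.

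Now I must transfer this to $\rho_1(Q)$. By construction, $\rho_1(Q) = \langle N_G(Q) \cap \rho_0(Q') \mid Q' \in \Gamma_{S,p}\rangle$, so it suffices to check that each of the generating pieces of $U$ coming from $\rho_0(Q_1)$ and $\rho_0(\whq_1)$ already lies in $N_G(Q)$ for the given $Q \subseteq E$. This is exactly the remark, established right after the definition of $U$, that $U$ centralizes every subgroup of $S$: in particular $U \subseteq C_G(Q) \subseteq N_G(Q)$ for all nontrivial $Q \subseteq E$. Hence for such $Q$ we have $U \cap \rho_0(Q_1) = N_G(Q) \cap (U \cap \rho_0(Q_1))$ and likewise for $\whq_1$, and these two intersections together generate $U$. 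Therefore $U \subseteq \rho_1(Q)$.

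The one point that needs care — and which I expect to be the main obstacle — is the determinant bookkeeping: verifying cleanly that every $\operatorname{diag}(a,b) \in U$ factors as a product of an element of $U \cap \rho_0(Q_1)$ and an element of $U \cap \rho_0(\whq_1)$. The subtlety is that $\rho_0(Q_1)$ allows the first block only in $\langle w^{q-1}\rangle$ (determinant-one in the torus) while the second block ranges over all of $\SL(e,q)$; one writes $a = a_0 \cdot (w^{q-1})^j$ and pushes the "extra" determinant into the $\SL(e,q)$ factor of the other generator, checking via Lemma~\ref{lem:glebasics}(c) that $\Det$ restricted to $\langle w\rangle$ surjects onto $\bfq^\times$ so the compensation is always available; the global constraint $\Det(ab)=1$ is what makes the two compensations match up. Once that factorization is in hand, the rest is immediate from $U \subseteq C_G(Q)$.
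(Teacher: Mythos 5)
Your argument has a genuine gap in the decomposition step. You propose to write each element $\operatorname{diag}(a,b)$ of $U$ as a product of an element of $\rho_0(Q_1)$ and an element of $\rho_0(\whq_1)$, and you correctly sense that ``the determinant bookkeeping'' is the crux. But the bookkeeping does not close: by Proposition~\ref{prop:rhozero}(a), $\rho_0(Q_1)$ consists of block matrices $\operatorname{diag}(a,b)$ with $a\in\langle w^{q-1}\rangle$ and $b\in\SL(e,q)$, so \emph{every block} of \emph{every} element of $\rho_0(Q_1)$ has $e\times e$ determinant equal to $1$; the same holds for $\rho_0(\whq_1)$. Consequently $\langle\rho_0(Q_1),\rho_0(\whq_1)\rangle\subseteq\SL(e,q)\times\SL(e,q)$. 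However $U$ is not contained in $\SL(e,q)\times\SL(e,q)$: for $q>2$ the element $\operatorname{diag}(w,w^{-1})$ lies in $U$ (the two $e\times e$ determinants $w^s$ and $w^{-s}$ cancel), yet $\Det(w)=w^s$ generates $\bF_q^\times$ and is $\neq1$, so $w\notin\SL(e,q)$. Thus $U\not\subseteq\langle\rho_0(Q_1),\rho_0(\whq_1)\rangle$, and the remark that ``one pushes the extra determinant into the $\SL(e,q)$ factor'' cannot be carried out: the $\SL(e,q)$ block itself must have determinant one, so it cannot absorb a nontrivial determinant.

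The missing ingredient is $\rho_0(Q_2)$, which by Proposition~\ref{prop:rhozero}(b) contains $\SL(2,\bK)$. This supplies block matrices $\operatorname{diag}(a,a^{-1})$ for arbitrary $a\in\bK^\times=\langle w\rangle$, whose individual $e\times e$ blocks need not have determinant $1$. The paper's decomposition is
\[
\begin{bmatrix} a & 0 \\ 0 & b \end{bmatrix}
=\begin{bmatrix} a & 0 \\ 0 & a^{-1} \end{bmatrix}
\begin{bmatrix} 1 & 0 \\ 0 & ab \end{bmatrix},
\]
where the first factor lies in $\SL(2,\bK)\subseteq\rho_0(Q_2)$ and the second lies in $\rho_0(Q_1)$ since $\Det(ab)=1$ forces $ab\in\SL(e,q)$. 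Both factors are in $U\subseteq C_G(Q)\subseteq N_G(Q)$, which is the part of your argument that is correct and is exactly what is needed to conclude they lie in $N_G(Q)\cap\rho_0(Q')$ and hence in $\rho_1(Q)$. So your framing of the reduction (use $U\subseteq N_G(Q)$ to pass from $\rho_0$-membership to $\rho_1$-membership) is right, but the pair $(Q_1,\whq_1)$ must be replaced by $(Q_2,Q_1)$ to make the factorization of $U$ actually exist.
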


\begin{proof} Let $Q$ be any nontrivial subgroup of
$E$ and let $X = \left[\begin{smallmatrix} a & 0 \\ 0 & b
\end{smallmatrix} \right]$ be an element of $U$. Then, regarding
$a$ and $b$ as elements of $\bK$, we must have that
$ab = c^{q-1}$ for some $c \in \langle w \rangle$, 
because the determinant
of $X$ is one. That is, we can write
\[
\begin{bmatrix} a & 0 \\ 0 & b \end{bmatrix} =
\begin{bmatrix} a & 0 \\ 0 & a^{-1} \end{bmatrix}
\begin{bmatrix} 1 & 0 \\ 0 & ab \end{bmatrix}
\]
and the latter matrix must have determinant one. But now
\[
\begin{bmatrix} a & 0 \\ 0 & a^{-1} \end{bmatrix} \
\in \ \rho_0(Q_2) \quad \text{and} \quad
\begin{bmatrix} 1 & 0 \\ 0 & ab \end{bmatrix}
\ \in \rho_0(Q_1).
\]
Because both of these elements are in $U \subseteq N_G(Q)$
we have that $X$ is in $\rho_1(Q)$.
\end{proof}

Next we note some information about a few particular
elements.

\begin{lemma}
We have the following
\begin{itemize}
\item[(a)]
\[
A_1 \ = \ \begin{bmatrix} 1 & 0 \\ 0 & gv^{e-1} \end{bmatrix}
\ \in \ \rho_0(Q_1),  \quad \text{and} \quad
A_2 \ = \ \begin{bmatrix} gv^{e-1} & 0 \\ 0 & 1 \end{bmatrix}
\ \in \rho_0(\whq_1);
\]
\item[(b)]
\[
B_1 \ = \ \begin{bmatrix} g & 0 \\ 0 & g \end{bmatrix}
\ \in \rho_2(Q_i)  \quad
\text{for} \quad i = 2,3, \quad \text{and} \quad
B_2 \ = \  \begin{bmatrix} 0 & 1 \\ -1 & 0 \end{bmatrix}
\ \in \rho_0(Q_2);
\]
\item[(c)]
\[
C_1 \ = \ \begin{bmatrix} w & 0 \\ 0 & w^{-1} \end{bmatrix}
\ \in \ \rho_0(Q_2), \quad \text{and} \quad
C_2 \ = \ \begin{bmatrix} w^{q-1} & 0 \\ 0 & 1 \end{bmatrix}
\ \in \rho_2(Q_2).
\]
\end{itemize}
\end{lemma}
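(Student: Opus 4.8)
The plan is to verify each membership by direct matrix computation, using the description of the relevant normalizers from Proposition~\ref{prop:e2normalizer} and the formula for the commutator subgroups implicit in Proposition~\ref{prop:rhozero}. Recall that for a subgroup $Q$ of $S$ we have $\rho_0(Q) = [N_G(Q), N_G(Q)]$, that $\rho_1(Q)$ is generated by all intersections $N_G(Q) \cap \rho_0(Q')$ with $Q' \in \Gamma_{S,p}$, and that $\rho_2(Q)$ is generated by all intersections $N_G(Q) \cap \rho_1(Q')$; so to place an element in $\rho_2(Q_i)$ it suffices to exhibit it as a product of elements, each lying in $N_G(Q_i)$ and in some $\rho_1(Q')$.

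For part (a): from Proposition~\ref{prop:rhozero}(a) the group $\rho_0(Q_1)$ consists of block-diagonal matrices $\left[\begin{smallmatrix} a & 0 \\ 0 & b \end{smallmatrix}\right]$ with $a \in \langle w^{q-1}\rangle$ and $b \in \SL(e,q)$. Since $\Det(gv^{e-1}) = \Det(g)\Det(v)^{e-1} = (-1)^{e-1}(-1)^{e-1} = 1$ by Lemma~\ref{lem:glebasics}(e), the element $gv^{e-1}$ lies in $\SL(e,q)$, so $A_1 = \left[\begin{smallmatrix} 1 & 0 \\ 0 & gv^{e-1} \end{smallmatrix}\right] \in \rho_0(Q_1)$; symmetrically $A_2 \in \rho_0(\whq_1)$ since $\whq_1 = \langle B\rangle$ plays the role of $Q_1$ with the blocks interchanged. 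For part (b): the matrix $B_2 = \left[\begin{smallmatrix} 0 & 1 \\ -1 & 0\end{smallmatrix}\right]$ lies in $\SL(2,\bK)$ (it has determinant one and its entries $0,1,-1$ all lie in $\bfq \subseteq \bK$), hence in $\rho_0(Q_2)$ by Proposition~\ref{prop:rhozero}(b). For $B_1 = \left[\begin{smallmatrix} g & 0 \\ 0 & g\end{smallmatrix}\right]$: this element normalizes $Q_2$ and $Q_3$ by Proposition~\ref{prop:e2normalizer}(b),(c). To land it in $\rho_2(Q_i)$ I would write $B_1$ as a product of elements of $N_G(Q_i) \cap \rho_1(Q')$ for suitable $Q'$; the natural candidate is to use that $A_1 A_2 = \left[\begin{smallmatrix} gv^{e-1} & 0 \\ 0 & gv^{e-1}\end{smallmatrix}\right]$ lies in $\rho_1$ of the relevant subgroups (being a product of an element of $\rho_0(Q_1)$ and one of $\rho_0(\whq_1)$, both in $U \subseteq N_G(Q_i)$), and then correct by the central element $\left[\begin{smallmatrix} v^{1-e} & 0 \\ 0 & v^{1-e}\end{smallmatrix}\right] \in U \subseteq \rho_1(Q_i)$, using the previous proposition which gives $U \subseteq \rho_1(Q)$ for all nontrivial $Q \leq E$. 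Multiplying gives $B_1 \in \rho_2(Q_i)$ for $i = 2,3$.

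For part (c): $C_1 = \left[\begin{smallmatrix} w & 0 \\ 0 & w^{-1}\end{smallmatrix}\right]$ has determinant one and is a block-diagonal matrix with entries in $\bK$, hence in $\SL(2,\bK) \subseteq \rho_0(Q_2)$ by Proposition~\ref{prop:rhozero}(b). For $C_2 = \left[\begin{smallmatrix} w^{q-1} & 0 \\ 0 & 1\end{smallmatrix}\right]$: this lies in $\rho_0(Q_1)$ by Proposition~\ref{prop:rhozero}(a) (with $a = w^{q-1}$, $b = 1 \in \SL(e,q)$), it lies in $U \subseteq N_G(Q_2)$ since $\Det(w^{q-1}) = (w^s)^{q-1}$ and $w^s$ has order $q-1$ by Lemma~\ref{lem:glebasics}(c) so the determinant is $1$, and therefore $C_2 \in N_G(Q_2) \cap \rho_0(Q_1) \subseteq \rho_1(Q_2) \subseteq \rho_2(Q_2)$.

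I expect the main obstacle to be part (b), specifically pinning down $B_1 \in \rho_2(Q_i)$: one must be careful that the factorization genuinely uses only elements lying simultaneously in $N_G(Q_i)$ and in some $\rho_1(Q')$, rather than merely in $\rho_0$, so the bookkeeping of which subgroup each factor normalizes (and of the scalar correction living in $U$) needs to be done explicitly using Proposition~\ref{prop:e2normalizer}; everything else reduces to the determinant identities from Lemma~\ref{lem:glebasics}(c),(e) and the explicit forms of $\rho_0$ from Proposition~\ref{prop:rhozero}.
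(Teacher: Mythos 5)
Your arguments for part (a), for $B_2$ in part (b), and for part (c) are correct; your argument for $C_2$ via $C_2 \in U \cap \rho_0(Q_1) \subseteq N_G(Q_2)\cap \rho_0(Q_1) \subseteq \rho_1(Q_2)$ is in fact slightly more direct than the paper's route (which realizes $C_2 = [A_2, C_1]$ and lands it in $\rho_1(\whq_1) \cap N_G(Q_2)$), and the two are equally valid.

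However, your argument for $B_1$ has a genuine error. You justify $A_1A_2 \in \rho_1(Q_i)$ for $i = 2,3$ by asserting that $A_1$ and $A_2$ are ``both in $U \subseteq N_G(Q_i)$''. Neither containment is true. Recall that $U$ consists of block-diagonal matrices with both diagonal entries in $\langle w\rangle = C_H(u)$, whereas $A_1 = \left[\begin{smallmatrix}1 & 0\\ 0 & gv^{e-1}\end{smallmatrix}\right]$ has the block $gv^{e-1}$, and $g \notin \langle w\rangle$; so $A_1, A_2 \notin U$. More to the point, $A_1 \notin N_G(Q_2)$ and $A_1 \notin N_G(Q_3)$: conjugating $\left[\begin{smallmatrix}u & 0\\ 0 & u\end{smallmatrix}\right]$ by $A_1$ yields $\left[\begin{smallmatrix}u & 0\\ 0 & u^q\end{smallmatrix}\right]$, which is not in $Q_2$ since $e>1$ forces $q\not\equiv 1\pmod p$ (and similarly for $Q_3$). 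The same applies to $A_2$. Only the \emph{product} $A_1A_2 = \left[\begin{smallmatrix}gv^{e-1} & 0\\ 0 & gv^{e-1}\end{smallmatrix}\right]$ normalizes $Q_2$ and $Q_3$, since it applies the Galois twist to both blocks simultaneously. Consequently your premise fails and your conclusion $A_1A_2 \in \rho_1(Q_i)$ is unjustified.

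The repair, which is what the paper does, is to route through $\rho_1$ of a subgroup that $A_1$ and $A_2$ both \emph{do} normalize. Since $A_1 \in C_G(Q_1)$ and $A_2 \in N_G(Q_1)$ (also both are in $N_G(S)$), one gets $A_1 \in N_G(Q_1)\cap\rho_0(Q_1) \subseteq \rho_1(Q_1)$ and $A_2 \in N_G(Q_1)\cap\rho_0(\whq_1) \subseteq \rho_1(Q_1)$, hence $A_1A_2 \in \rho_1(Q_1)$. Now because $A_1A_2 \in N_G(Q_i)$ for $i = 2,3$, this gives $A_1A_2 \in N_G(Q_i)\cap\rho_1(Q_1) \subseteq \rho_2(Q_i)$. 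Your $U$-correction step then works as you intended: $\left[\begin{smallmatrix}v^{1-e} & 0\\ 0 & v^{1-e}\end{smallmatrix}\right] \in U \subseteq \rho_1(Q_i) \subseteq \rho_2(Q_i)$, and since $\rho_2(Q_i)$ is a subgroup, $B_1 = (A_1A_2)\cdot\left[\begin{smallmatrix}v^{1-e} & 0\\ 0 & v^{1-e}\end{smallmatrix}\right] \in \rho_2(Q_i)$. So the idea to use $A_1A_2$ and a correction from $U$ is the right one; only the bookkeeping of which normalizer $A_1$ and $A_2$ live in — and therefore at which stage $\rho_j$ the product first appears — was wrong, and it goes up one level ($\rho_2$, not $\rho_1$).
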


\begin{proof}
The first statement is obvious, because $gv^{e-1} \in \SL(e,q)$.
Thus, $A_2$ is an element of $\rho_1(Q_1)$, and the
product $A_1A_2$ is in $\rho_2(Q_2)$ and $\rho_2(Q_3)$. Because
both $A_1$ and $A_2$ are in $N_G(S)$ we have that $B_1$ is in
$\rho_1(S)$ and hence also in $\rho_2(Q_2)$.  This proves the
first part of the second statement. The second part follows from the
fact that $B_2$ has determinant equal to one.  The first part of (c) is obvious, while the
second part follows directly from the
fact that $C_2$ is the commutator of
$A_2$ and $C_1$, both of which are in $\rho_1(\whq_1)$, where 
$\whq_1 = \langle B \rangle$.
Hence the commutator is in
$\rho_1(\whq_1) \cap N_G(Q_2) \subseteq \rho_2(Q_2)$.
\end{proof}

\begin{prop} \label{prop:rho2}
For any nontrivial subgroup $Q$ of $E$, we have that
$\rho_2(Q) = N_G(Q)$.
\end{prop}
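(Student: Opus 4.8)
The plan is to establish $N_G(Q)\subseteq\rho_2(Q)$ for every nontrivial $Q\subseteq E$, the reverse containment being automatic from Proposition~\ref{prop:balmerapp4}(i). Since $Q\mapsto\rho_i(Q)$ is equivariant under conjugation by $N_G(S)$ — this follows at once from the defining formula for $\widehat{(-)}$, as such conjugation permutes $\Gamma_{S,p}$ and commutes with $Q\mapsto[N_G(Q),N_G(Q)]$ — it is enough to treat one representative of each $N_G(S)$-orbit of nontrivial subgroups of $E$: namely $Q_1$, $Q_2$, $E$ itself, and, whenever they occur, the subgroups $Q_3=\langle AB^r\rangle$ with $u^r$ not conjugate to $u$ in $H=\GL(e,q)$. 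For each such $Q$ I will check that the generators of $N_G(Q)$ listed in Proposition~\ref{prop:e2normalizer} lie in $\rho_2(Q)$, using: the inclusion $U\subseteq\rho_1(Q)$ for nontrivial $Q\subseteq E$; the descriptions of $\rho_0(Q_1)$, $\rho_0(Q_2)$, $\rho_0(S)$ in Proposition~\ref{prop:rhozero}; the elements $A_1,A_2,B_1,B_2$ of the preceding Lemma together with their known positions in the $\rho_i$'s; and Lemma~\ref{lem:glebasics}, especially the fact that on $\langle w\rangle\cong\bK^\times$ the determinant agrees with the norm $N_{\bK/\bF}$ and maps onto $\bfq^\times$.

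For $Q_1$ a direct check from the description of $C_G(Q_1)$ shows that $U$ and $\rho_0(Q_1)$ together generate $C_G(Q_1)$, so $C_G(Q_1)\subseteq\rho_1(Q_1)$; and the remaining generator $\left[\begin{smallmatrix}g&0\\0&v^{e-1}\end{smallmatrix}\right]$ equals $A_2\cdot\left[\begin{smallmatrix}v^{1-e}&0\\0&v^{e-1}\end{smallmatrix}\right]$ with $A_2\in\rho_0(\whq_1)\cap N_G(Q_1)\subseteq\rho_1(Q_1)$ and the second factor in $U$, whence $N_G(Q_1)\subseteq\rho_1(Q_1)$. For $Q_2$ I will first show $\rho_0(Q_2)=C_G(Q_2)$: the elements $\left[\begin{smallmatrix}c^{q-1}&0\\0&1\end{smallmatrix}\right]$, $c\in\bK^\times$, appearing among the generators of $\rho_0(Q_2)$ in Proposition~\ref{prop:rhozero}(b) fill out $\{X:\det_\bK X\in\ker N_{\bK/\bF}\}$ modulo $\SL(2,\bK)$, because $\{c^{q-1}:c\in\bK^\times\}=\ker N_{\bK/\bF}$, and this is precisely $C_G(Q_2)=\{X\in\GL(2,\bK):N_{\bK/\bF}(\det_\bK X)=1\}$. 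Since $B_1\in\rho_2(Q_2)$, Proposition~\ref{prop:e2normalizer}(b) gives $N_G(Q_2)=\langle C_G(Q_2),B_1\rangle\subseteq\rho_2(Q_2)$. For $Q=E$ I will argue $N_G(E)=N_G(S)$: the containment $N_G(S)\subseteq N_G(E)$ holds because $E$ consists of the elements of order dividing $p$ in the abelian group $S$, while $N_G(E)\subseteq N_G(S)$ because $C_G(E)=U$ (Corollary~\ref{cor:centS}) is abelian and hence has $S$ as its unique, thus characteristic, Sylow $p$-subgroup. Then each generator of $N_G(S)$ from Proposition~\ref{prop:e2normalizer} lies in $\rho_2(E)$: $U\subseteq\rho_1(E)$; $\left[\begin{smallmatrix}g&0\\0&v^{e-1}\end{smallmatrix}\right]\in N_G(E)\cap\rho_1(Q_1)$ by the $Q_1$ case; and $B_2\in N_G(E)\cap\rho_0(Q_2)\subseteq\rho_1(E)$.

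The real obstacle is the exceptional antidiagonal generator $D=\left[\begin{smallmatrix}0&g^i\\g^jv^{e(i+j+1)}&0\end{smallmatrix}\right]$ that may occur in $N_G(Q_3)$ by Proposition~\ref{prop:e2normalizer}(c): it normalizes neither $Q_1$, nor $\whq_1$, nor $Q_2$, so it cannot be fed directly into $N_G(Q_3)\cap\rho_1(Q')$ for those $Q'$. My plan is to detour through $S$. First, a short conjugation computation shows $D$ carries each of the two cyclic generators of $S$ into $S$, so $D\in N_G(S)$. Next I will show $D\in\rho_1(S)$. From $\rho_0(Q_2)=C_G(Q_2)$ one gets $U\subseteq N_G(S)\cap\rho_0(Q_2)\subseteq\rho_1(S)$ and $B_2\in N_G(S)\cap\rho_0(Q_2)\subseteq\rho_1(S)$; Proposition~\ref{prop:rhozero}(d) gives $\left[\begin{smallmatrix}g&0\\0&g^{-1}\end{smallmatrix}\right]\in\rho_0(S)\subseteq\rho_1(S)$; and $A_1,A_2$ lie in $N_G(S)\cap\rho_0(Q_1)$, respectively $N_G(S)\cap\rho_0(\whq_1)$, hence in $\rho_1(S)$. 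A routine verification shows that these elements generate every block-diagonal matrix $\left[\begin{smallmatrix}P&0\\0&R\end{smallmatrix}\right]$ with $P,R\in N_H(\langle u\rangle)$ and $\Det(P)\Det(R)=1$. Since $\left[\begin{smallmatrix}-g^jv^{e(i+j+1)}&0\\0&g^i\end{smallmatrix}\right]$ is of this form and $D=B_2\cdot\left[\begin{smallmatrix}-g^jv^{e(i+j+1)}&0\\0&g^i\end{smallmatrix}\right]$, it follows that $D\in\rho_1(S)$, hence $D\in N_G(Q_3)\cap\rho_1(S)\subseteq\rho_2(Q_3)$. Together with $C_G(Q_3)=U\subseteq\rho_1(Q_3)$ and $B_1\in\rho_2(Q_3)$ this gives $N_G(Q_3)\subseteq\rho_2(Q_3)$; and when $D$ is absent, $N_G(Q_3)=\langle U,B_1\rangle$ and the inclusion is immediate. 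The conceptual content is thus the $\rho_1(S)$-detour for $D$; the rest amounts to the routine bookkeeping of normalizers, centralizers and generating sets flagged above, which I will carry out in detail.
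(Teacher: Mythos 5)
Your overall architecture matches the paper's: verify generator-by-generator that $N_G(Q)\subseteq\rho_2(Q)$ for representatives of the conjugacy classes of nontrivial subgroups of $E$, using $U\subseteq\rho_1(Q)$, the explicit normalizers of Proposition~\ref{prop:e2normalizer}, and the marked elements $A_1,A_2,B_1,B_2,C_1,C_2$. In fact you are more careful than the published proof in two places: you treat the subgroup $E$ itself, and you deal explicitly with the possible antidiagonal generator $D=\left[\begin{smallmatrix}0&g^i\\g^jv^{e(i+j+1)}&0\end{smallmatrix}\right]$ of $N_G(Q_3)$ via the $\rho_1(S)$ detour. That detour is sound (and is the right fix for the paper's omission): once one has $\rho_1(S)=N_G(S)$, the containment $N_G(Q_3)\subseteq N_G(C_G(Q_3))=N_G(U)\subseteq N_G(S)$ already gives $N_G(Q_3)\subseteq N_G(Q_3)\cap\rho_1(S)\subseteq\rho_2(Q_3)$ regardless of whether $D$ occurs.

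There is, however, a genuine gap in the $Q_2$ step. You assert that $\rho_0(Q_2)=C_G(Q_2)$ because, you claim, the generators $X^{[q]}X^{-1}$ of $\rho_0(Q_2)$ produce every $\left[\begin{smallmatrix}c^{q-1}&0\\0&1\end{smallmatrix}\right]$ with $c\in\bK^\times$. But Proposition~\ref{prop:rhozero}(b) requires $X\in C_G(Q_2)$, which forces $\det_\bK(X)\in\ker N_{\bK/\bF}$; hence $\det_\bK\big(X^{[q]}X^{-1}\big)=\det_\bK(X)^{q-1}$ ranges only over $(\ker N_{\bK/\bF})^{q-1}$, a subgroup of index $\gcd\bigl(q-1,s\bigr)=\gcd(q-1,e)$ in $\ker N_{\bK/\bF}$, where $s=(q^e-1)/(q-1)$. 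This index can be larger than $1$ (take $q=4$, $e=3$, $p=7$: then $\gcd(q-1,e)=3$), in which case $\rho_0(Q_2)$ is a \emph{proper} subgroup of $C_G(Q_2)$ and your argument does not reach all of $N_G(Q_2)$. The missing ingredient is exactly the commutator identity of the preceding lemma in the paper: $C_2=\left[\begin{smallmatrix}w^{q-1}&0\\0&1\end{smallmatrix}\right]$ equals $[A_2,C_1]$, with both $A_2$ and $C_1$ in $\rho_1(\whq_1)$, so that $C_2\in\rho_1(\whq_1)\cap N_G(Q_2)\subseteq\rho_2(Q_2)$. Since $\det_\bK(C_2)=w^{q-1}$ generates $\ker N_{\bK/\bF}$, one then has $C_G(Q_2)=\langle C_2,\SL(2,\bK)\rangle\subseteq\rho_2(Q_2)$, and together with $B_1\in\rho_2(Q_2)$ this yields $N_G(Q_2)\subseteq\rho_2(Q_2)$. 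Substituting this for your $\rho_0(Q_2)=C_G(Q_2)$ claim closes the gap; the rest of your proposal stands.
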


\begin{proof}
Recall that $N_G(Q_1)$ is generated by $C_G(Q_1) = U$
and $A_2$. So $\rho_1(Q_1) = N_G(Q_1)$. Likewise,
$\rho_1(S) = N_G(S)$ because the normalizer of $S$ is generated
by $A_2$ and $B_2$. The normalizer of $Q_3$ is generated
by $C_G(Q_3) = U$ and $B_1$. So $\rho_2(Q_3) = N_G(Q_3)$.

For the normalizer of $Q_2$, we note that the element $C_2$
when regarded as an element of $\GL(2,\bK)$ has determinant
$w^{q-1}$. As $w$ is a generator of $\bK^\times$, we get that
$C_G(Q_2)$ is generated by $C_2$ and $\SL(2, \bK)$. Thus,
$N_G(Q_2)$, which is generated by $C_G(Q_2)$ and $B_1$,
is contained in $\rho_2(Q_2)$. This proves the last part.
\end{proof}

From all this we can derive the following.

\begin{thm} \label{thm:2e}
Suppose that $G = \SL(2e,q)$ for $e >1$, the least integer
such that $p$ divides $q^{e}-1$. Then $TT(G) = \{0\},$
and $T(G) = \bZ$ is generated by the class of $\Omega(k)$. 
\end{thm}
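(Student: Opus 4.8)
The plan is to combine the earlier structural results with Balmer's machinery in a two-step argument. First I would invoke Theorem~\ref{thm:torfree}: since $e>1$ forces $p>2$ and $2e = 2e + 0$ gives $r = 2 \geq 2$, a Sylow $p$-subgroup $S$ of $G = \SL(2e,q)$ has rank $2$, so $TF(G) \cong \bZ$ is generated by $[\Omega(k)]$. Hence the only remaining task is to show $TT(G) = \{0\}$, and since $TT(G)$ is the kernel of the restriction map $T(G) \to T(S)$ (by the discussion after Theorem~\ref{thm:rank}, as $S$ is abelian and hence $T(S) = 0$ is torsion-free — actually $T(S) \cong \bZ$ since $S$ has rank $2$, so $TT(G)$ injects into $TT(S) = 0$), it suffices to prove that the restriction $T(G) \to T(S)$ has trivial kernel, i.e.\ $A(G,S) = \{1\}$ by Balmer's Theorem~\ref{thm:balmer}.

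To show $A(G,S) = \{1\}$ I would apply Corollary~\ref{cor:balmerapp3}: since $S$ is abelian with $E$ its subgroup of elements of order dividing $p$, it is enough to check that every $u \in A(G,S)$ satisfies $u(x) = 1$ for all $x \in N_G(Q)$ whenever $Q$ is a nontrivial subgroup of $E$. This is exactly the statement that the functions $\rho_0, \rho_1, \rho_2$ built above eventually satisfy $\rho_2(Q) = N_G(Q)$ for every nontrivial $Q \leq E$. The engine here is Proposition~\ref{prop:balmerapp4} together with the Remark that $\rho_0(Q) = [N_G(Q),N_G(Q)]$ is a legitimate starting function (by Lemma~\ref{lem:kerbasics}(c)): iterating the hat-construction twice preserves the property that $u$ vanishes on $\rho_i(Q)$ for all $u \in A(G,S)$, and Proposition~\ref{prop:rho2} asserts precisely that after two iterations we reach the full normalizer. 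So the proof of Theorem~\ref{thm:2e} is essentially the assembly: $\rho_2(Q) = N_G(Q)$ (Prop.~\ref{prop:rho2}) $\Rightarrow$ hypothesis of Cor.~\ref{cor:balmerapp3} holds $\Rightarrow$ $A(G,S) = \{1\}$ $\Rightarrow$ $TT(G) = \ker(T(G)\to T(S)) = \{0\}$ $\Rightarrow$ (with Theorem~\ref{thm:torfree}) $T(G) \cong \bZ$ generated by $[\Omega(k)]$.

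The genuinely hard part of this whole development — already carried out in the propositions preceding the theorem — is the detailed bookkeeping of the centralizers and normalizers of $Q_1, Q_2, Q_3$ and $S$ (Propositions~\ref{prop:e2normalizer}, \ref{prop:rhozero}, and the particular-elements lemma), including the subtle point that $Q_1$ is only conjugate within $S$ to $\whq_1 = \langle B\rangle$ and the exceptional swap-type element in $N_G(Q_3)$. Given that infrastructure, the remaining proof is short and formal: one cites Theorem~\ref{thm:torfree}, Proposition~\ref{prop:rho2}, Corollary~\ref{cor:balmerapp3}, and Theorem~\ref{thm:balmer} in turn, and then observes that a trivial kernel of restriction to $S$ (whose own endotrivial group is torsion-free) kills all torsion in $T(G)$. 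I would write the proof in exactly that order, flagging that the one thing to double-check is that every nontrivial $p$-subgroup $Q$ of $S$ has $N_G(Q) \subseteq N_G(Q\cap E)$ with $Q\cap E$ nontrivial, which is what lets us reduce from all of $S$ to subgroups of $E$ — but this is already packaged inside Corollary~\ref{cor:balmerapp3}.
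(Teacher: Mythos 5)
Your proposal is correct and follows essentially the same route as the paper: reduce the torsion-free part to Theorem~\ref{thm:torfree}, identify $TT(G)$ with the kernel of restriction to $S$ (using $TT(S)=\{0\}$), and kill that kernel by combining Proposition~\ref{prop:rho2} with Corollary~\ref{cor:balmerapp3} via Balmer's Theorem~\ref{thm:balmer}. The paper's proof is a one-paragraph assembly of exactly these citations, and your slightly longer write-up (after the brief self-correction about $T(S)\cong\bZ$) matches it in content and logic.
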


\begin{proof}
By Theorem \ref{thm:torfree} we need only show that 
$TT(G) = \{0\}$, which is equivalent to showing that
$A(G,S) = \{1\}$ by Theorem \ref{thm:balmer} and using the fact
that $TT(S) = \{0\}.$  The fact that $A(G,S) = \{1\}$ is 
proved in Proposition \ref{prop:rho2} and Corollary \ref{cor:balmerapp3}.  
\end{proof}

%%%%%%%%%%%%%%%%%%%%  section    %%%%%%%%%%%%%%%%%%%%%%%%%

\section{Endotrivial modules for $\SL(re, q)$, $2 < r <p$} \label{sec:re}
We consider the endotrivial modules for the 
group $G = \SL(n, q)$ where $n = re$ for $3 \leq r < p$. Here,
as before, $e$ is the least integer such that $p$ divides 
$q^e-1$. We assume that $e > 1$. Let $\whg = \GL(n, q)$. 
Our purpose in this 
section is to show that $TT(G)$ has order one, that is, the 
only indecomposable endotrivial module that has trivial Sylow 
restriction is the trivial module.

To begin, we let $\whl = \whl(e, e, \dots, e) 
\subseteq \whg$ be the Levi subgroup 
of invertible $e \times e$ diagonal block matrices. 
Let $L = \whl \cap G$, the corresponding Levi subgroup
for $G$. We start with a technical lemma that will be 
essential to our argument. 

\begin{lemma} \label{lem:re1}
Let $N = N_G(L)$. With the above hypothesis, we have that 
$L \subseteq [N,N]$ and $N/[N,N]$ has order $2$. 
\end{lemma}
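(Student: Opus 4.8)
The plan is to work out $N = N_G(L)$ explicitly and then exhibit enough commutators. First I would identify $N_{\whg}(\whl)$ for $\whg = \GL(re,q)$: since $\whl$ is the Levi subgroup of $e \times e$ diagonal blocks, its normalizer in $\whg$ is the semidirect product $\whl \rtimes \CS_r$, where $\CS_r$ permutes the blocks via the obvious permutation matrices (built from $e \times e$ identity blocks). Intersecting with $G = \SL(re,q)$ gives $N = L \rtimes \widehat{\CS}_r$, where $\widehat{\CS}_r$ is a set of coset representatives realized by signed block-permutation matrices chosen to have determinant $1$ — for a transposition of two blocks this means using a block $\begin{bmatrix} 0 & I_e \\ -I_e & 0 \end{bmatrix}$ or similar, whose determinant is $(-1)^e$, so one adjusts by a determinant-$\pm1$ element of $C_H(u) = \langle w \rangle$ in one block to land in $G$. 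The key structural point is that $N/L \cong \CS_r$.

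Next I would show $L \subseteq [N,N]$. The subgroup $L$ consists of tuples $(a_1,\dots,a_r)$ with $a_i \in \GL(e,q)$ and $\prod \Det(a_i) = 1$. I would produce such elements as commutators in two stages. (i) Each factor $\SL(e,q)$ (sitting in the $i$-th block) is perfect, hence already contained in $[L,L] \subseteq [N,N]$. (ii) For the "diagonal determinant" directions, I would use the symmetric-group elements: conjugating a block-diagonal element $D$ by a transposition $\sigma_{ij}$ swapping blocks $i$ and $j$ gives $\sigma_{ij} D \sigma_{ij}^{-1} D^{-1}$, which lies in $L$ and, by choosing $D$ to be a scalar $\zeta$ in block $i$ and $\zeta^{-1}$ in block $j$ (or more simply a diagonal unit $\delta$ in block $i$), produces elements whose determinant pattern is $(\delta, \delta^{-1}, 1, \dots, 1)$. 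Since $r \geq 3$, there is enough room to generate all of $L/(L \cap \SL$-part$)$ this way; combined with (i), this gives $L \subseteq [N,N]$. (The condition $r < p$ ensures the relevant $r$-th roots and determinant conditions behave well, and $e > 1$ is what forces $p$ odd, so $-1 \neq 1$ in the scalars.)

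Finally, for $N/[N,N]$: since $L \subseteq [N,N]$, we have $N/[N,N]$ a quotient of $N/L \cong \CS_r$, hence it is a quotient of $\CS_r^{\mathrm{ab}} \cong \bZ/2\bZ$ (using $r \geq 3$, so $\CS_r$ is not abelian and its only proper nontrivial abelianization quotient is of order $2$). It remains to rule out $N/[N,N] = \{1\}$, i.e. to show $N$ is not perfect — equivalently, to exhibit a surjection $N \to \bZ/2\bZ$. The natural candidate is the sign character of $\CS_r$ pulled back along $N \twoheadrightarrow \CS_r$; I need to check it is well-defined as a homomorphism on $N$, which follows once I confirm the extension $1 \to L \to N \to \CS_r \to 1$ is compatible with the sign map (it is, because the sign depends only on the coset in $N/L$). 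So $N/[N,N] \cong \bZ/2\bZ$.

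I expect the main obstacle to be step (ii): carefully choosing the signed block-permutation representatives inside $\SL(re,q)$ and tracking the determinant bookkeeping so that the commutators $[\sigma_{ij}, D]$ genuinely sweep out all of the "determinant torus" part of $L$ — in particular verifying that $r \geq 3$ (rather than $r = 2$) is exactly what makes this work, since with only two blocks one commutator pattern $(\delta,\delta^{-1})$ is a single $1$-parameter family rather than enough to fill the whole quotient. The earlier computation in Lemma~\ref{lem:glebasics}(d), giving $[N_H(S),N_H(S)] = \langle w^{q-1}\rangle$ inside $H = \GL(e,q)$, will be the right local input for controlling the determinant directions block-by-block.
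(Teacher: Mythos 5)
Your outline follows the same route as the paper's proof: identify $N/L \cong \CS_r$, put $\SL(e,q)^{\times r}$ into $[N,N]$ by perfectness, then use commutators with block permutations to absorb the remaining determinant directions, and conclude $N/[N,N] \cong \CS_r^{\mathrm{ab}} \cong \bZ/2\bZ$. The paper packages the commutator step by the determinant homomorphism $\varphi\colon N \to (\bF_q^\times)^r \rtimes \CS_r$, identifies the target with the torus normalizer in $\GL(r,q)$ intersected with $\SL(r,q)$, and then cites the explicit commutator computation already recorded in Lemma~\ref{lem:Tincommutator} --- a clean way to avoid redoing the bookkeeping.

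The piece you rightly flag as the main obstacle is not merely bookkeeping: the specific commutator you wrote down does not work. If $D$ has determinant pattern $(\zeta,\zeta^{-1},1,\dots,1)$ supported only on blocks $i$ and $j$, then $[\sigma_{ij},D]$ has pattern $(\zeta^{-2},\zeta^2,1,\dots,1)$, so you only reach squares, which is a proper subgroup of the determinant torus whenever $q-1$ is even. To reach the whole torus you must place the compensating determinant in a \emph{third} block: if $D$ has pattern $(1,d,d^{-1},1,\dots)$ with the $d^{-1}$ sitting in some block $k \ne i,j$, then $[\sigma_{ij},D]$ has pattern $(d,d^{-1},1,\dots)$ exactly. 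This is exactly what the proof of Proposition~\ref{prop:toral1} (the source of Lemma~\ref{lem:Tincommutator}) does with its choice $W = D_{i\pm 1}$ rather than $W = D_i$, and it is precisely where $r \ge 3$ is used: with two blocks there is nowhere to hide the compensation and you genuinely only obtain the squares. Once you make that correction, your argument is the paper's. (A small slip elsewhere: the block $\left[\begin{smallmatrix} 0 & I_e \\ -I_e & 0 \end{smallmatrix}\right]$ has determinant $1$, not $(-1)^e$; it is the unsigned block swap $\left[\begin{smallmatrix} 0 & I_e \\ I_e & 0 \end{smallmatrix}\right]$ that has determinant $(-1)^e$. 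This does not affect the overall plan since you correct by a diagonal factor anyway.)
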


\begin{proof}
The structure of $\whn = N_\whg(\whl)$ is well known. It
has the form 
\[ 
\whn \ \cong \  \whl \rtimes \CS_r,
\]
where $\CS_r$ is the symmetric group on $r$ letters. 
An element of $\whl \rtimes \CS_r$ is a pair consisting
of a block diagonal matrix with blocks $A_1, \dots, A_r \in \GL(e, q)$
and an element $\sigma$ of $\CS_r$.
So we have a map
\[ 
\varphi: \ \whn  \ \longrightarrow  (\bF_q^{\times})^r \rtimes \CS_r
\]
where $\varphi((A_1, \dots, A_r, \sigma)) = 
(\Det(A_1), \dots, \Det(A_r), \sigma)$.
One can verify 
that $\varphi$ is a homomorphism. Note that the kernel of $\varphi$
is $\SL(e,q)^{\times r}$ which is a perfect group and hence is contained
in the commutator subgroup $[N,N]$. 

Now note that $(\bF_q^{\times})^r \rtimes \CS_r$ is naturally 
isomorphic to $N_{GL(r,q)}(T)$, the normalizer in $\GL(r,q)$ of the
torus $T$ of invertible diagonal matrices. 
From Lemma \ref{lem:Tincommutator}, we know that 
$T \subseteq [N_{GL(r,q)}(T), N_{GL(r,q)}(T)]$.
It is an easy check that
$\varphi(\whn) = N_{GL(r,q)}(T) \cap \SL(r,q)$. 
Hence, $N/[N,N] \cong \CS_r/[\CS_r,\CS_r]$ has order 2. 
\end{proof}

\begin{lemma} \label{lem:re2}
Every indecomposable $kL$-module with trivial Sylow restriction has
dimension one.  
\end{lemma}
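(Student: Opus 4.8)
The plan is to reduce the statement about $L = \whl \cap G$ to the already-established structure of $\GL(e,q)$-blocks, using the factorization of $L$ as a product of general linear groups. Recall $\whl \cong \GL(e,q)^{\times r}$ and $L = \whl \cap \SL(re,q)$ consists of block-diagonal tuples $(A_1,\dots,A_r)$ with $\prod_i \Det(A_i) = 1$. A Sylow $p$-subgroup of $L$ is a Sylow $p$-subgroup of $\whl$ contained in $L$, so it has the form $S_1 \times \dots \times S_r$ where $S_i$ is a cyclic Sylow $p$-subgroup of the $i$-th factor $\GL(e,q)$. An indecomposable $kL$-module $M$ with trivial Sylow restriction is, by the usual relative projectivity argument, a direct summand of $k_{S}^{\uparrow L}$ where $S$ is this Sylow subgroup.

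First I would analyze the normalizer $N_L(S)$. Since each $S_i$ is the unique (hence characteristic) Sylow $p$-subgroup of $\langle w_i \rangle = C_{\GL(e,q)}(u_i)$, conjugation acts within each block and $N_{\whl}(S) = \prod_i N_{\GL(e,q)}(S_i)$, and $N_L(S)$ is the intersection of this product with $L$. The key point from Section~\ref{sec:GLinfo}, specifically Lemma~\ref{lem:glebasics}(d), is that in $\GL(e,q)$ the commutator subgroup $[N_{\GL(e,q)}(S_i), N_{\GL(e,q)}(S_i)] = \langle w_i^{q-1} \rangle$ is precisely the set of determinant-one elements of $C_{\GL(e,q)}(S_i)$, and the quotient $N_{\GL(e,q)}(S_i)/[\cdot,\cdot]$ has order $e(q-1)$. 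I would assemble these to show that $S$ is contained in $[N_L(S), N_L(S)]$: indeed an element of $S_i$ lies in $\langle w_i \rangle$ and has $p$-power order, hence determinant $1$ (as $p \nmid q-1$), so it lies in $\langle w_i^{q-1}\rangle = [N_{\GL(e,q)}(S_i), N_{\GL(e,q)}(S_i)]$, and such commutators can be realized inside $L$ because they have determinant one in each block.

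Then I would run the same argument that underlies Proposition~\ref{prop:toral1} and Proposition~\ref{prop:toral2}: since $S$ is characteristic in $S_1 \times \dots \times S_r$ but one needs $S$ normal in $N_L(S)$, which holds, any indecomposable endotrivial $kN_L(S)$-module with trivial Sylow restriction is one-dimensional. More directly, $M$ restricted to $N_L(S)$ decomposes as $X \oplus \proj$ with $\dim X = 1$; the one-dimensional representation $X$ kills $[N_L(S), N_L(S)] \supseteq S$, so $X_{\downarrow S}$ is trivial, forcing $(X^{\uparrow L})_{\downarrow S}$ — and hence $M_{\downarrow S}$, which is a summand — to be a sum of trivial modules plus projectives consistently only if $M$ itself is the Green correspondent of a one-dimensional $kN_L(S)$-module. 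Since all one-dimensional $kL$-modules correspond to one-dimensional $kN_L(S)$-modules via Green correspondence, and since (by the computation above) any one-dimensional $kL$-module restricts trivially to $S$ while being determined by determinants on the blocks, I conclude that the indecomposable endotrivial summands of $k_S^{\uparrow L}$ are exactly the one-dimensional $kL$-modules.

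The main obstacle I anticipate is handling the determinant-one constraint carefully: $L$ is not a direct product of general linear groups but the kernel of a product of determinants, so I must check that the one-dimensional $kL$-modules arising here genuinely restrict trivially to $S$ and that the Green correspondence / Clifford-theoretic bookkeeping between $L$ and $\whl$ (or between $L$ and $N_L(S)$) does not introduce higher-dimensional endotrivial summands. Concretely, the delicate step is verifying that $N_L(S) = N_G(S) \cap L$ has $N_L(S)/[N_L(S),N_L(S)]$ a $p'$-group (equivalently $S \subseteq [N_L(S),N_L(S)]$), which follows block-by-block from Lemma~\ref{lem:glebasics}(d) together with the fact that $p \nmid q-1$ forces $p$-elements to have trivial determinant; once that is in hand, the conclusion follows from the standard Mackey/relative-projectivity argument exactly as in the proof of Theorem~\ref{thm:sl-pdiv}.
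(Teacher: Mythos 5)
Your block-diagonal analysis of $N_L(S)$ and the verification that $S\subseteq[N_L(S),N_L(S)]$ are reasonable (and can be made to work, using commutators of elements that put $g$ and $w$ in one block with determinant corrections of the form $v^{e-1}$ and $w^{-1}$ in another), and you correctly deduce that $M_{\downarrow N_L(S)}\cong X\oplus\proj$ with $X$ one-dimensional and $X_{\downarrow S}$ trivial. The gap is in the final step: from ``the $N_L(S)$-Green correspondent of $M$ is one-dimensional'' you conclude ``$M$ is one-dimensional,'' but Green correspondence does not preserve dimension. It is entirely possible a priori for a one-dimensional $kN_L(S)$-module to have an indecomposable endotrivial Green correspondent of large dimension; ruling this out is precisely the content of this lemma and of the whole torsion computation, so it cannot be absorbed into ``standard Mackey/relative-projectivity'' or deferred to ``the same argument as Theorem~\ref{thm:sl-pdiv}'' (which uses a Young-module decomposition and a double-coset count that are not available here). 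You flag this risk yourself at the end, but you do not close it.

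The paper sidesteps the Green-correspondence trap by means of Proposition~\ref{prop:normal2}, whose mechanism is different and crucial. One does not take $H=N_L(S)$, but rather the intermediate subgroup $H=\bigl(N_U(S_1)\times U^{\times(r-1)}\bigr)\cap G$, where $U=\GL(e,q)$: this $H$ has a nontrivial \emph{normal} $p$-subgroup (the normal $p$-part of $N_U(S_1)$ in the first factor), so $M_{\downarrow H}\cong X\oplus\proj$ with $\dim X=1$ by Proposition~\ref{prop:normal-p}; and the point is that the left cosets of $H$ in $L$ are represented by elements supported only in the \emph{first} block, which therefore \emph{centralize} a fixed copy of $S_1$ sitting in one of the \emph{other} blocks. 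That forces the fixed $p$-subgroup $Q$ to act trivially on all of $X^{\uparrow L}$, hence on the summand $M$, and endotriviality then yields $\dim M=1$. The product structure of $\whl\cong U^{\times r}$ is what makes this two-block trick available; working with $N_L(S)$ alone there is no fixed nontrivial $p$-subgroup centralized by all coset representatives, and the argument cannot be completed along the lines you sketch.
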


\begin{proof}
Let $U = \GL(e,q)$, and let $S_1$ be a Sylow $p$-subgroup of $U$. 
Let $\whh = N_U(S_1) \times U^{\times (r-1)}
\subseteq \whl$ be the subgroup of diagonal $e \times e$ 
blocks in $\GL(n,q)$, the first of which is in $N_U(S_1)$
and the last $r-1$ being in $U$. Let $H = \whh \cap G$. 
Note that $H$ has a nontrivial normal $p$-subgroup. Hence, every 
$kH$-module with trivial Sylow restriction 
has dimension one. Note that $\whl \cong U^{\times r}$ and
we see that every left coset of $H$ in $L$ is represented
by an element having the block form $X \times \Id_U^{\times (r-1)}$, 
for some $X \in U$ of determinant one, where $\Id_U$
is the identity element for $U$. Every one of these elements 
centralizes a nontrivial $p$-subgroup of $H$, namely a subgroup 
of the form $S_{1}\times \Id_U^{\times (r-1)} $. 
The lemma now follows from Proposition \ref{prop:normal2}.
\end{proof}

The following two lemmas deal with situations when $M$ has trivial Sylow restriction. 

\begin{lemma} \label{lem:re3} Let $M$ is a $kG$-module
with trivial Sylow restriction (i.e., $M_{\downarrow S} \cong k \oplus \proj$.) 
The restriction of $M$ to $L$ has the form 
\[
M_{\downarrow L} \ \cong \ k \oplus \proj.
\]
\end{lemma}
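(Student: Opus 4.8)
The statement to prove is Lemma~\ref{lem:re3}: if $M$ is a $kG$-module with trivial Sylow restriction, then $M_{\downarrow L} \cong k \oplus \proj$. The plan is to reduce the restriction statement step by step through a chain of subgroups, exactly as was done in Section~\ref{sec:abelian1} via Propositions~\ref{prop:toral1} and \ref{prop:toral2}. First I would use the fact that $H$ (from Lemma~\ref{lem:re2}, the group $\whh \cap G$ with $\whh = N_U(S_1) \times U^{\times(r-1)}$) contains a Sylow $p$-subgroup of $G$, so that $M$ is relatively $H$-projective and $M_{\downarrow H} \cong X \oplus \proj$ for a one-dimensional $kH$-module $X$ (using that $H$ has a nontrivial normal $p$-subgroup, invoking Proposition~\ref{prop:normal-p}). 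Since $L \supseteq H$ also contains a Sylow $p$-subgroup of $G$, the same argument gives $M_{\downarrow L} \cong Y \oplus \proj$ with $Y$ a $kL$-module, and by Lemma~\ref{lem:re2}, $Y$ is one-dimensional. So the real content is to show $Y$ is trivial, equivalently (comparing restrictions) that the one-dimensional module $X$ on $H$ is forced to be trivial — or rather, that the one-dimensional summand of $M_{\downarrow L}$ is trivial.

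The key step is therefore to identify the one-dimensional $kL$-modules and rule out all nontrivial ones. Here I would use Lemma~\ref{lem:re1}: since $L \subseteq [N,N]$ where $N = N_G(L)$, and more to the point, I would analyze the abelianization of $L$ directly. The Levi $L = \whl \cap G$ with $\whl \cong \GL(e,q)^{\times r}$ consists of block-diagonal matrices $(A_1,\dots,A_r)$ with $\prod \Det(A_i) = 1$; its one-dimensional representations factor through $L/[L,L]$. Since each $\SL(e,q)$ is perfect, $[L,L]$ contains $\SL(e,q)^{\times r}$, so one-dimensional characters of $L$ are pulled back from the image of $L$ in $(\bF_q^\times)^r$ cut out by $\prod \Det(A_i) = 1$, i.e. from a subgroup of $(\bF_q^\times)^{r-1}$. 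To pin down which of these can appear, I would restrict further to the torus or to the diagonal $p$-subgroups inside $L$ and argue as in Proposition~\ref{prop:toral1}: a character that is trivial on a Sylow $p$-subgroup $S \subseteq L$ (which it must be, since $M_{\downarrow S} \cong k \oplus \proj$) and whose restriction to the relevant tori is forced to be trivial must be the trivial character. Concretely, I expect to exhibit, for each nontrivial character $\chi$ of $L/[L,L]$, an element of $S$ (a diagonal matrix with $p$-power-order entries and determinant one) on which $\chi$ is nontrivial, contradicting trivial Sylow restriction; the homocyclic structure of $S$ recalled in Section~\ref{sec:GLback} and the explicit diagonal generators should make this bookkeeping routine.

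\textbf{Main obstacle.} The delicate point is the interplay between the determinant-one condition and the $p$-part: I need that the restriction of a nontrivial one-dimensional character of $L$ to the Sylow $p$-subgroup $S \subseteq L$ is nontrivial. This can fail in principle if the nontrivial character has order prime to $p$, so trivial Sylow restriction alone does not immediately kill it. The resolution should parallel the $e=1$ case: one shows that $S$, being a Sylow $p$-subgroup of $L$, is not contained in $[L,L]$ only up to a $p'$-part quotient, and the characters of $L$ that are trivial on $S$ but nontrivial overall would have to be $p'$-characters — and then I would need to produce additional elements (e.g. permutation-type or block-swap elements available because we are looking at characters of $L$ itself, which has abelianization a $p'$-group once we quotient by $\SL(e,q)^{\times r}$ and impose determinant one) showing that any such character already fails to be trivial on some $p$-element, or alternatively argue directly that since $L/[L,L]$ is a $p'$-group the module $M_{\downarrow L}$, being a summand of $k_{\downarrow L}^{\uparrow G}$-type inductions with $k$-trivial source, can only have the trivial one-dimensional summand. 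I expect the cleanest route is the latter: identify that $M$ is a trivial-source module whose Green correspondent over $S$ is $k$, hence its one-dimensional $L$-summand, being itself a trivial-source module with trivial $S$-source, corresponds to the trivial module — so the one-dimensional piece is $k_L$.
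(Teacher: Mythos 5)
Your reduction to showing that the one-dimensional summand $Y$ of $M_{\downarrow L}$ is trivial is correct, and you have correctly identified the real obstacle: since $e>1$ we have $p \nmid q-1$, so $L/[L,L]$ is a $p'$-group and trivial Sylow restriction by itself gives no constraint on $Y$. The problem is that neither of your two proposed resolutions closes the gap.

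The trivial-source route does not work. \emph{Every} one-dimensional $kL$-module has vertex a Sylow $p$-subgroup $S$ of $L$ and source $k_S$, because its restriction to the $p$-group $S$ is necessarily the trivial module. So ``trivial-source with trivial $S$-source'' is a property shared by all the nontrivial one-dimensional characters of $L$ (e.g.\ determinant-type characters of $\GL(e,q)$-factors), and it cannot single out $k_L$. Likewise, the route of exhibiting a $p$-element of $S$ on which a nontrivial character of $L$ is nontrivial is dead on arrival, as you yourself note, precisely because those characters factor through a $p'$-group.

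The ingredient you mention but then set aside — Lemma~\ref{lem:re1}, that $L \subseteq [N,N]$ for $N = N_G(L)$ — is in fact the key, but it has to be deployed by passing through $N$ rather than working on $L$ directly. The paper's proof restricts $M$ to $N$ first: write $M_{\downarrow N} \cong V \oplus \proj$ with $V$ indecomposable and endotrivial. Then $V_{\downarrow L} \cong X \oplus \proj$ with $X$ one-dimensional by Lemma~\ref{lem:re2}. Since $L$ is normal in $N$ and $V$ is a direct summand of $X^{\uparrow N}$, Clifford theory says $V_{\downarrow L}$ is a direct sum of $N$-conjugates of $X$; no conjugate of $X$ is projective (they are one-dimensional and $p$ divides $|L|$), so comparing with $V_{\downarrow L} \cong X \oplus \proj$ forces $V_{\downarrow L} = X$ and hence $V$ is one-dimensional. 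Now $X$ is the restriction to $L$ of a one-dimensional $kN$-module, and $L \subseteq [N,N]$ kills it: $X \cong k_L$. In short, the information that rules out the nontrivial $p'$-characters of $L$ is not $p$-local information inside $L$, but the extra symmetry coming from the Weyl-type action of $N/L \cong \CS_r$; your proposal never brings that into play.
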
 

\begin{proof}
Let $N = N_G(L)$. Write $M_{\downarrow N} \ = \ V \oplus \proj$ 
where $V$ is an indecomposable $kN$-module
with trivial Sylow restriction. We must have
that $V_{\downarrow L} \cong X \oplus \proj$ where $X$ has dimension
one, by Lemma \ref{lem:re2}. 
Because $L$ is normal in $N$, and $V$ is a direct summand of 
$X^{\uparrow N}$, it follows from Clifford theory and the fact that
$V$ is endotrivial that $V$ has 
dimension one. Thus, $X \cong k$,
since $L$ is in the commutator subgroup of $N$. 
\end{proof}

\begin{lemma} \label{lem:re4} Let $M$ be a $kG$-module
with trivial Sylow restriction, and let $\CL = L((r-1)e, e)$ 
be the Levi subgroup that is the intersection
of $G$ with $\GL((r-1)e,q) \times \GL(e,q) \subseteq \GL(n,q)$. Then 
$M_{\downarrow \CL} \cong k \oplus \proj$. Hence, $M$ 
is a direct summand of $k_{\CL}^{\uparrow G}$.
\end{lemma}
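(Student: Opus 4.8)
The plan is to reduce the statement about $\CL = L((r-1)e,e)$ to the already-established facts about the diagonal-block Levi $L = L(e,\dots,e)$, by exploiting that $\CL \supseteq L$ contains a Sylow $p$-subgroup of $G$ and that the one-dimensional modules in sight are forced to be trivial because $G = \SL(re,q)$ is perfect. First I would observe that $S \subseteq L \subseteq \CL$, so by Lemma~\ref{lem:re3} we already know $M_{\downarrow L} \cong k \oplus \proj$. Since $\CL$ contains a Sylow $p$-subgroup of $G$, standard relative-projectivity arguments show $M$ is a direct summand of $k_\CL^{\uparrow G}$ once we have shown $M_{\downarrow\CL} \cong k \oplus \proj$; so the real content is the restriction statement.

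To prove $M_{\downarrow \CL} \cong k \oplus \proj$, I would argue as follows. The group $\CL$ has a nontrivial normal $p$-subgroup: inside $\GL((r-1)e,q)\times\GL(e,q)$ one can place a scalar-type element in the second factor (or, more carefully, a central $p$-element of $\GL(e,q)$ embedded in the last block together with a compensating scalar making the determinant one — using $2<r<p$ so that the relevant exponent is not $\equiv 1 \pmod p$, exactly as in the proof of Proposition~\ref{prop:toral2}). Hence by Proposition~\ref{prop:normal-p}, $M_{\downarrow\CL} \cong X \oplus \proj$ with $\dim X = 1$. Now I would restrict further down to $L \subseteq \CL$: we get $X_{\downarrow L} \oplus \proj \cong M_{\downarrow L} \cong k \oplus \proj$, and since $L$ contains $S$ and $X$ is one-dimensional, $X_{\downarrow L}$ is a one-dimensional (hence indecomposable, non-projective) summand, forcing $X_{\downarrow L} \cong k$. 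But $L \supseteq [N,N] \cap L = L$... more simply: a one-dimensional $k\CL$-module whose restriction to the subgroup $L$ is trivial must itself be trivial, because $\CL$ is generated by $L$ together with $\SL((r-1)e,q)\times\SL(e,q)$-type perfect pieces — concretely, $\CL/[\CL,\CL]$ is detected by determinants of the two blocks, and $L$ already surjects onto $\CL/[\CL,\CL]$ since $L$ contains the full diagonal torus of determinant-one matrices. So $X \cong k$ and $M_{\downarrow\CL} \cong k \oplus \proj$.

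The cleanest way to package the last point is: the one-dimensional $k\CL$-modules are parametrized by the characters of $\CL/[\CL,\CL]$, and under the block-determinant map $\CL \to \{(d_1,d_2)\in(\bF_q^\times)^2 : d_1 d_2 = 1\} \cong \bF_q^\times$ the image of $L$ is already everything (take block-diagonal matrices supported in two $e$-blocks); since $X_{\downarrow L}$ is trivial, the character of $X$ kills this image and hence is trivial. Then $M$ being a summand of $k_\CL^{\uparrow G}$ follows because $\CL$ contains a Sylow $p$-subgroup of $G$, so the identity map on $M$ factors through $(M_{\downarrow\CL})^{\uparrow G}$ — a summand of which is $k_\CL^{\uparrow G}$ up to projectives.

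The main obstacle I anticipate is verifying carefully that $\CL$ really has a nontrivial normal $p$-subgroup (equivalently, that $Z(\CL)$ has order divisible by $p$) under the hypotheses $n = re$, $3\le r<p$, $e>1$: one must produce an explicit determinant-one $p$-element central in $\CL$, and the arithmetic constraint $r \not\equiv 1 \pmod p$ (which holds since $2 < r < p$) is what makes this possible, mirroring the device used in Proposition~\ref{prop:toral2}. Everything after that is bookkeeping with Proposition~\ref{prop:normal-p}, Lemma~\ref{lem:re3}, and relative projectivity.
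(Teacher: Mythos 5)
The key first step of your plan is false, and this sinks the whole approach. You claim that $\CL = \big(\GL((r-1)e,q)\times\GL(e,q)\big)\cap\SL(n,q)$ has a nontrivial normal $p$-subgroup, constructed by placing a ``central $p$-element of $\GL(e,q)$'' in the last block with a compensating scalar, mirroring the device of Proposition~\ref{prop:toral2}. But that device requires $p\mid q-1$, which is exactly the hypothesis of Section~\ref{sec:abelian1}. In Section~\ref{sec:re} the standing assumption is $e>1$, i.e.\ $p\nmid q-1$, so $\bF_q^\times$ contains no element of order $p$. Consequently $Z(\GL(e,q))\cong\bF_q^\times$ has no $p$-element, $Z(\CL)$ has order prime to $p$, and in fact $O_p(\CL)=\{1\}$: any normal $p$-subgroup of $\CL$ would lie in $\SL((r-1)e,q)\times\SL(e,q)$ (since $\CL$ modulo this subgroup has order dividing $q-1$), and the only normal subgroups of that product are central extensions of products of the simple $\PSL$-factors, whose centers are $p'$-groups. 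So Proposition~\ref{prop:normal-p} simply does not apply to $\CL$, and you never get the conclusion that the indecomposable summand $V$ of $M_{\downarrow\CL}$ is one-dimensional.

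What the paper does instead is avoid the normal-$p$-subgroup criterion entirely. Starting from Lemma~\ref{lem:re3} ($M_{\downarrow L}\cong k\oplus\proj$), the indecomposable endotrivial summand $V$ of $M_{\downarrow\CL}$ is a direct summand of $k_L^{\uparrow\CL}$ by relative projectivity. The paper then shows, via the Mackey formula, that every $L$-$L$ double coset in $\CL$ has a representative in the first block $\GL((r-1)e,q)\cap G$, which centralizes a nontrivial $p$-subgroup sitting in the last block $\GL(e,q)\cap G$; hence $(k_L^{\uparrow\CL})_{\downarrow L}$ has no projective summands, forcing the projective part of $V_{\downarrow L}$ to vanish and $V$ to be one-dimensional. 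Only at that point does your (correct) observation about $L$ surjecting onto $\CL/[\CL,\CL]$ under the block-determinant map come into play, to upgrade one-dimensional to trivial. So the end of your argument is salvageable and matches the paper's implicit last step, but the way you reach ``$V$ is one-dimensional'' needs to be replaced by the Mackey/double-coset argument.
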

 
\begin{proof}
This is another argument using the Mackey formula. Let 
$M_{\downarrow \CL} = V \oplus \proj$, where $V$ is indecomposable. 
Then by the previous lemma, $V$ is a direct summand of 
$k_L^{\uparrow \CL}$. However, all of the $L$-$L$ 
double cosets in $\CL$ can be chosen to be in the 
factor $\GL((r-1)e,q) \cap G$ which centralizes a nontrivial
$p$-subgroup in the factor $\GL(e,q) \cap G$. Consequently, 
$(k_L^{\uparrow \CL})_{\downarrow L}$
can have no projective summands by the Mackey formula. 
Thus, $V \cong k_{\CL}$.
\end{proof}

\begin{lemma}  \label{lem:re5}
The modules $k_L^{\uparrow G}$ and $k_\whl^{\uparrow \whg}$ each 
have at most two indecomposable endotrivial direct summands. In 
each case, one of these summands is the trivial module $k$.  
\end{lemma}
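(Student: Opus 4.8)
The plan is to prove the statement for $k_\whl^{\uparrow \whg}$ first, using the theory of Young modules, and then transfer the conclusion to $k_L^{\uparrow G}$ by a Mackey/Clifford argument. For the $\whg = \GL(n,q)$ side, recall from Theorem \ref{thm:youngbasic} that $k_\whl^{\uparrow \whg}$ decomposes as a direct sum of Young modules $Y_\mu$ with $\mu \geq \lambda$ for $\lambda = [e,e,\dots,e]$ (the partition of $n$ into $r$ parts of size $e$), with $Y_\lambda$ appearing with multiplicity one. Since $Y_{[n]} = k$ appears (as $[n]$ is the top partition), one of the direct summands is indeed $k$. The key point is that a Young module $Y_\mu$ is a trivial source module whose vertex is a Sylow $p$-subgroup of the corresponding Levi $\whl_\mu$; such a module can be endotrivial only if that vertex is a Sylow $p$-subgroup of $\whg$, which forces $Y_\mu$ to have trivial Sylow restriction. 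By Corollary \ref{cor:normal1} (using $p \mid q-1$, or the appropriate centrality argument in the $e>1$ case — here one should instead invoke the fact that $\GL(n,q)$ has central $p$-elements precisely when... ) — actually more carefully, I would use that an endotrivial Young module restricts trivially to a Sylow subgroup and then count: the only partitions $\mu \geq \lambda$ for which the Sylow $p$-subgroup of $\whl_\mu$ can equal a full Sylow $p$-subgroup of $\whg$ are $\mu = \lambda$ itself and $\mu = [n]$. For $\mu = [n]$ we get $k$; for $\mu = \lambda$ we get at most one further endotrivial summand, namely $Y_\lambda$ (if it happens to be endotrivial). Hence $k_\whl^{\uparrow \whg}$ has at most two indecomposable endotrivial direct summands, one of which is $k$.

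Next I would transfer this to $G = \SL(n,q)$. As in the proof of Theorem \ref{thm:sl-pdiv}, since $\whl G = \whg$ with a single $\whl$–$G$ double coset, the Mackey formula gives $k_L^{\uparrow G} \cong (k_\whl^{\uparrow \whg})_{\downarrow G}$. So the endotrivial direct summands of $k_L^{\uparrow G}$ are exactly the endotrivial direct summands of the restrictions to $G$ of the indecomposable summands $Y_\mu$ of $k_\whl^{\uparrow \whg}$. A summand of $(Y_\mu)_{\downarrow G}$ that is endotrivial forces $(Y_\mu)_{\downarrow G}$ to have an endotrivial summand, hence (by the vertex argument above applied after restriction, or directly) $Y_\mu$ itself must restrict trivially to a Sylow $p$-subgroup of $G$; combined with the constraint on $\mu$ this leaves only $\mu = [n]$ (contributing $k$) and $\mu = \lambda$. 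For $\mu = \lambda$, Clifford theory says $(Y_\lambda)_{\downarrow G}$ is either indecomposable or a direct sum of $G$-conjugates, but since $\whg/G$ is cyclic (it is $\Det$) all the conjugate summands are simultaneously endotrivial or not; in either case the collection of indecomposable endotrivial summands of $k_L^{\uparrow G}$ coming from $Y_\lambda$ is either empty or... here I need the count to come out to at most one more beyond $k$, which is why the lemma is phrased with "at most two": I would argue that $(Y_\lambda)_{\downarrow G}$ being a sum of two or more endotrivial conjugates is impossible because then $k_L^{\uparrow G}$ would contain a nontrivial one-dimensional (by Lemma \ref{lem:re3}, an endotrivial $G$-module with trivial Sylow restriction is trivial — wait, that's exactly the point being built toward), so in fact the cleanest route is: by Lemma \ref{lem:re3} every endotrivial summand of $k_L^{\uparrow G}$ with trivial Sylow restriction is trivial, hence $k$ is the \emph{only} trivial-Sylow-restriction endotrivial summand, and any other endotrivial summand does not have trivial Sylow restriction and must be the restriction of $Y_\lambda$; bounding \emph{those} by one is the content.

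The main obstacle I anticipate is the bookkeeping in the last step: pinning down that, beyond $k$, there is \emph{at most one} further indecomposable endotrivial summand rather than several, given that $(Y_\lambda)_{\downarrow G}$ could a priori break into several $\Det$-conjugate pieces. I would handle this exactly as in the proof of Theorem \ref{thm:sl-pdiv}: use that $N = N_G(L)$ (equivalently the relevant normalizer of a Sylow subgroup) has abelianization of order $2$ by Lemma \ref{lem:re1}, so $k_L^{\uparrow N}$ has exactly two one-dimensional summands, whence $k_L^{\uparrow G} = (k_L^{\uparrow N})^{\uparrow G}$ has at most two endotrivial summands (the Green correspondents of those two one-dimensional $kN$-modules), one of which is $k$. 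The same argument applies verbatim on the $\whg$ side with $N_\whg(\whl) \cong \whl \rtimes \CS_r$, whose abelianization modulo the determinant data again has a $\CS_r^{\mathrm{ab}}$ of order $2$. This simultaneously disposes of both halves of the lemma and makes the transfer step unnecessary to treat separately.
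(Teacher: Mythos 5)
Your final paragraph is the paper's proof: $N = N_G(L)$ contains $N_G(S)$, so every endotrivial direct summand of $k_L^{\uparrow G} = (k_L^{\uparrow N})^{\uparrow G}$ is the Green correspondent of an endotrivial direct summand of $k_L^{\uparrow N} \cong k\CS_r$; any such summand has trivial restriction to $S \subseteq L$, hence is one-dimensional, and by Lemma \ref{lem:re1} there are exactly two one-dimensional ones, $k$ and the sign module. (The paper disposes of $k_\whl^{\uparrow \whg}$ by noting that $(k_\whl^{\uparrow \whg})_{\downarrow G}$ is a direct summand of $k_L^{\uparrow G}$, rather than rerunning Green correspondence with $N_\whg(\whl)$; both routes are fine.)

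The Young-module vertex analysis in your first paragraph, however, rests on a false intermediate claim: it is not true that $\mu = \lambda$ and $\mu = [n]$ are the only partitions $\mu \geq \lambda = [e,\dots,e]$ whose Levi $\whl_\mu$ contains a full Sylow $p$-subgroup of $\whg$. Any $\mu$ obtained from $\lambda$ by merging parts of size $e$, for instance $[2e,e,\dots,e]$ or $[(r-1)e,e]$ itself, also works, since $r < p$ makes all of these Sylow $p$-subgroups abelian of order $p^{rt}$. So the vertex count alone cannot yield the bound of two, and the assertion that a Young module's vertex is a Sylow $p$-subgroup of its defining Levi is in any case not something the paper establishes. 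You were right to abandon that line in favor of the Green-correspondence argument. One small imprecision to fix in that final paragraph: $N_G(L)$ is not ``equivalently'' the normalizer of a Sylow subgroup --- the argument only requires the containment $N_G(S) \subseteq N_G(L)$, which is what actually holds.
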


\begin{proof}
First notice that, because $L \subseteq \whl$,  we have that
$(k_\whl^{\uparrow \whg})_{\downarrow G}$ is isomorphic to a direct
summand of $k_L^{\uparrow G}$.
So the result for $k_\whl^{\uparrow \whg}$ follows from 
the result for $k_L^{\uparrow G}$. 
Because $N = N_G(L)$ contains the normalizer of the 
Sylow subgroup $S$, we have that any endotrivial $kG$-module
is the Green correspondent of an endotrivial $kN$-module,
By transitivity of induction, $k_L^{\uparrow G} =
(k_L^{\uparrow N})^{\uparrow G}$, and there are no
more endotrivial $kG$-direct summands of $k_L^{\uparrow G}$ 
than there are endotrivial $kN$-direct summands of $k_L^{\uparrow N}
\cong k\CS_r$. The lemma is a consequence of the fact that
there are exactly two one dimensional direct summands 
of $k_L^{\uparrow G}$, namely $k$ and the sign module.
\end{proof}

From this point we use the notation. Let $\whcl = \whl((r-1)e,e)
= \GL((r-1)e,q) \times \GL(e,q)$ and let $\CL = \whcl \cap G$.

\begin{lemma} \label{lem:re6}
With the above notation, we have that 
\[
k_{\CL}^{\uparrow G} \quad \cong \quad 
(k_\whcl^{\uparrow \whg})_{\downarrow G}
\]
\end{lemma}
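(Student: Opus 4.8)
The plan is to prove this by the same Mackey-formula argument already used repeatedly in this section (compare the proof of Lemma~\ref{lem:re4} and especially the identity $k_L^{\uparrow G}\cong(k_\whl^{\uparrow\whg})_{\downarrow G}$ established in the proof of Theorem~\ref{thm:sl-pdiv}). The key point is to count $\whcl$-$G$ double cosets in $\whg$ and show there is only one, represented by the identity. Concretely, the Mackey formula gives
\[
(k_\whcl^{\uparrow\whg})_{\downarrow G}\ \cong\ \bigoplus_{g\in[G\backslash\whg/\whcl]} \big(k_{\ls g\whcl\cap G}\big)^{\uparrow G},
\]
so I must argue first that $\whcl\cdot G=\whg$, and second that $G$ acts transitively on the cosets $\whg/\whcl$, i.e.\ $[G\backslash\whg/\whcl]$ has a single element.

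For the first point: since $\whg=\GL(n,q)$ and $G\supseteq\SL(n,q)$, the determinant surjects $G\to\bF_q^\times$, and $\whcl=\GL((r-1)e,q)\times\GL(e,q)$ already contains matrices of every determinant (e.g.\ scalars in the first block), so $\Det(\whcl)=\bF_q^\times$; combining, $\whcl\cdot G$ meets every coset of $\SL(n,q)$ in $\whg$ and contains $\SL(n,q)$, hence equals $\whg$. For the second point: the coset space $\whg/\whcl$ is the Grassmannian of $e$-dimensional subspaces (equivalently the flag of type $((r-1)e,e)$) of $\bF_q^n$, and $\SL(n,q)\subseteq G$ already acts transitively on this Grassmannian, so there is exactly one $G$-$\whcl$ double coset. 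Then $\ls1\whcl\cap G=\whcl\cap G=\CL$ by definition, and the Mackey sum collapses to the single term $k_\CL^{\uparrow G}$, which is exactly the claimed isomorphism.

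The only mildly delicate step is verifying that the identity is a legitimate double-coset representative and that no normalization subtlety intervenes when $G$ is strictly between $\SL(n,q)$ and $\GL(n,q)$ — but this is handled uniformly by the two observations above (determinant surjectivity of $\whcl$ and $\SL$-transitivity on the Grassmannian), both of which are standard facts about general linear groups of the kind collected in Section~\ref{sec:GLback}. I do not anticipate a genuine obstacle here; the lemma is the natural ``$\CL$-analogue'' of the $L$-level identity already proved, and the proof is a one-paragraph Mackey computation.
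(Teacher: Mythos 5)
Your proof takes the same approach as the paper: apply the Mackey formula and show there is a single $G$-$\whcl$ double coset in $\whg$, so the sum collapses to the term at $g=1$. The paper states exactly this, so the plan is right and the core step (noting $\Det(\whcl)=\bF_q^\times$, hence $G\whcl=\whg$, where $G$ contains $\ker(\Det)=\SL(n,q)$) is the right justification.

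Two small corrections in your side remarks, neither fatal. First, the aside ``the determinant surjects $G\to\bF_q^\times$'' is false in this section, where $G=\SL(n,q)$; it is also not needed, since the surjectivity of $\Det$ restricted to $\whcl$ together with $\SL(n,q)\subseteq G$ already gives $G\whcl=\whg$. Second, $\whg/\whcl$ is \emph{not} the Grassmannian (nor the partial flag variety of type $((r-1)e,e)$): $\whcl$ is the Levi, not the maximal parabolic $\whp\supsetneq\whcl$, so $\whg/\whcl$ is strictly larger than $\whg/\whp$. Fortunately this mis-identification does not matter, because ``$G$ acts transitively on $\whg/\whcl$'' and ``$G\whcl=\whg$'' are literally the same statement, and you have already proved the latter via determinants. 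So your proof is correct once the Grassmannian claim is deleted, and it coincides with the paper's one-line Mackey computation.
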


\begin{proof}
From the Mackey formula, we have that 
\[
(k_\whcl^{\uparrow \whg})_{\downarrow G} \ = \ 
\bigoplus_{x\in [G\backslash\whg/\whcl]} 
x \otimes k_{G \cap \whcl}^{\uparrow G} 
\cong \ k_\CL^{\uparrow G}
\]
since $G\whcl = \whg$ and there is only one $G$-$\whcl$ double 
coset which is represented by $x = 1$.
\end{proof}

Now we invoke the theory of Young modules. We have that 
\[
k_\whcl^{\uparrow \whg} \ \cong \ Y_{[(r-1)e,e]} \oplus 
\ \bigoplus_{j = 1}^{e-1} (Y_{[n-j,j]})^{a_j} \ \oplus \ Y_{[n]},
\]
for some multiplicities $a_j$. 
Note that $Y_{[n]} = k$ has multiplicity~$1$ by Frobenius 
reciprocity. Hence we only need to deal with 
the other summands. 

\begin{lemma} \label{lem:re7}
If $Y = (Y_{[(r-1)e,e]})_{\downarrow G}$ 
has an endotrivial direct summand, then 
it has only one. In that case $Y$ is indecomposable and 
$Y_{[(r-1)e,e]}$ is endotrivial as a $k\whg$-module.
\end{lemma}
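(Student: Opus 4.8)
The plan is to mimic the structure of the argument for Theorem~\ref{thm:sl-pdiv}, replacing the single partition $[n-1,1]$ by the partition $[(r-1)e,e]$ and using the same combination of Clifford theory with a "many trivial summands" argument. First I would record that by Lemma~\ref{lem:re4} any indecomposable endotrivial $kG$-module with trivial Sylow restriction is a direct summand of $k_\CL^{\uparrow G}$, and by Lemma~\ref{lem:re6} together with the Young module decomposition
\[
k_\whcl^{\uparrow\whg}\ \cong\ Y_{[(r-1)e,e]}\ \oplus\ \bigoplus_{j=1}^{e-1}(Y_{[n-j,j]})^{a_j}\ \oplus\ k,
\]
we have $k_\CL^{\uparrow G}\cong (k_\whcl^{\uparrow\whg})_{\downarrow G}$. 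By Lemma~\ref{lem:re5}, $k_\CL^{\uparrow G}$ has at most two indecomposable endotrivial summands, one of which is $k$. So at most one of the restrictions $(Y_{[(r-1)e,e]})_{\downarrow G}$ and $(Y_{[n-j,j]})_{\downarrow G}$, over all summands, contributes a nontrivial endotrivial summand, and that contribution is a single indecomposable module.

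Now specialize to $Y=(Y_{[(r-1)e,e]})_{\downarrow G}$. Since $Y_{[(r-1)e,e]}$ is an indecomposable $k\whg$-module and $G\trianglelefteq\whg$ (with $\whg/G$ abelian, being a quotient of $\bF_q^\times$ via the determinant), Clifford theory tells us $Y$ is either indecomposable or a direct sum of $\whg$-conjugate indecomposable summands. If $Y$ has an endotrivial summand $Y_\diamond$, then in the non-indecomposable case every conjugate $\ls x{Y_\diamond}$ is also endotrivial, giving more than one endotrivial indecomposable summand of $k_\CL^{\uparrow G}$ beyond $k$ --- contradicting Lemma~\ref{lem:re5}. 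Hence $Y$ must be indecomposable. Then $Y$ itself is endotrivial, and since $Y=(Y_{[(r-1)e,e]})_{\downarrow G}$ with $Y_{[(r-1)e,e]}$ a $k\whg$-module whose restriction to $G$ (equivalently to a Sylow $p$-subgroup, which lies in $G$) is endotrivial, it follows that $Y_{[(r-1)e,e]}$ is endotrivial as a $k\whg$-module. The "only one" endotrivial summand of $Y$ then also follows because $Y$ is indecomposable.

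The one point that requires a little care --- and which I expect to be the main (modest) obstacle --- is making sure the Clifford-theoretic dichotomy is applied correctly: one must note that the conjugates of an endotrivial module are endotrivial and pairwise non-isomorphic up to the action of $\whg/G$ on one-dimensional twists, so that a decomposable $Y$ with an endotrivial summand genuinely forces at least two non-trivial endotrivial indecomposable summands in $k_\CL^{\uparrow G}$, overflowing the bound of Lemma~\ref{lem:re5}. Once that is pinned down, the rest is bookkeeping with the already-established lemmas, and the statement follows.
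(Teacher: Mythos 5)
Your proof follows essentially the same route as the paper's: bound the number of indecomposable endotrivial direct summands of $k_\CL^{\uparrow G}$ by two (with one of them being $k$) via Lemma~\ref{lem:re5}, then apply Clifford theory for the normal subgroup $G\trianglelefteq\whg$ to force $Y$ to be indecomposable, and finally transfer endotriviality from $G$ up to $\whg$ through a common Sylow $p$-subgroup. One small remark: the caveat you flag about the conjugates being ``pairwise non-isomorphic up to one-dimensional twists'' is not actually needed, since Lemma~\ref{lem:re5} bounds the number of endotrivial summands \emph{with multiplicity}; so a decomposable $Y$ already overflows the bound whether or not its Clifford conjugates happen to be isomorphic.
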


\begin{proof}
The fact that there is at most one endotrivial direct summand of
$Y$ is a consequence of Lemma \ref{lem:re5}. By Clifford theory, 
the module $Y$ is a direct sum of conjugate $kG$-modules. If one 
of the summands is an endotrivial module then they all 
must be endotrivial. Since, by Lemma \ref{lem:re5}
there can be only one, $Y$ must be indecomposable and endotrivial. 
This means that it is also endotrivial as a $k\whg$-module. 
\end{proof}

Let $A$ be an invertible matrix of order $p$ in $\SL(e,q)$,
and let $X$ be the diagonal block matrix in $\SL(re, q)$ with 
$r$ diagonal blocks, each equal to $A$. We assume that the Sylow 
$p$-subgroup $S$ is chosen so that $X \in S$. Let $U = \langle X \rangle$
be the subgroup generated by $X$ in $G$. 

\begin{lemma}\label{lem:re8}
For all $1 \leq j < e$, the restriction of the Young module 
$Y_{[n-j,j]}$ to $U$ is a free module. Hence, $Y_{[n-j,j]}$ and its 
restriction to $G$ have no endotrivial direct summands. 
\end{lemma}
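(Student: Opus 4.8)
The plan is to compute the restriction of the induced module $k_{\whcl}^{\uparrow\whg}$ to the cyclic group $U=\langle X\rangle$ of order $p$ via the Mackey formula, and to show that every summand in the resulting decomposition is either free over $kU$ or carries a trivial action of $U$ exactly to the extent that accounts for the trivial summand $Y_{[n]}=k$. Since $k_{\whcl}^{\uparrow\whg}$ restricted to $G$ contains $(Y_{[(r-1)e,e]})_{\downarrow G}$, the modules $Y_{[n-j,j]}$ for $1\le j<e$, and a single copy of $k$, and since — by Lemma \ref{lem:re7} and the discussion following Lemma \ref{lem:re6} — the $kG$-indecomposable pieces coming from $Y_{[(r-1)e,e]}$ and from each $Y_{[n-j,j]}$ are precisely the Clifford-theoretic constituents of these restrictions, it suffices to locate where the $k_U$ summands sit. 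Concretely, I would count the multiplicity of $k_U$ in $(k_{\whcl}^{\uparrow\whg})_{\downarrow U}$ using double cosets $[U\backslash\whg/\whcl]$: by Lemma \ref{lem:count1} the number of such cosets $UxE$ that are stabilized by $U$ on the left equals $\vert\whcl : N_{\whcl}(U)\vert$ computed inside $\whg$, and all the remaining double cosets contribute free $kU$-summands because $\ls{x}{\whcl}\cap U$ is then trivial, forcing $k_{\ls{x}{\whcl}\cap U}^{\uparrow U}$ to be free.

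First I would identify $N_{\whg}(U)$ and $N_{\whcl}(U)$. Since $X$ is the block-scalar matrix with each of the $r$ diagonal $e\times e$ blocks equal to the fixed order-$p$ element $A\in\SL(e,q)$, conjugation by $X$ acts on $\whg=\GL(re,q)$ by the same formula inside each block, so an element normalizes $U=\langle X\rangle$ iff conjugation by it sends $A$ to some power $A^\ell$ in each block position it touches, in a compatible way. By minimality of $e$ (from Notation \ref{note:basic}), the relevant centralizers and normalizers are controlled exactly as in Section \ref{sec:GLinfo}: the centralizer of an order-$p$ element acting irreducibly on an $e$-dimensional space is the copy of $\bK^\times$, and its normalizer adds the Galois generator. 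From this I would extract $\vert\whg:N_{\whg}(U)\cdot\whcl\vert$ or, dually, the number of left cosets of $\whcl$ fixed by $U$; the key numerical input is that $U$ acts on $\whg/\whcl$ — which may be identified with the set of $e$-dimensional block-subspace data, or equivalently a flag-type variety — so that the fixed cosets correspond exactly to the parabolic-type configurations adapted to the block decomposition stabilized by $U$.

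The decisive step is then a matching argument: the multiplicity of $k_U$ in $(k_{\whcl}^{\uparrow\whg})_{\downarrow U}$ equals the total multiplicity of $k_U$ contributed by $k=Y_{[n]}$ together with all the $(Y_{[n-j,j]})_{\downarrow U}$ and $(Y_{[(r-1)e,e]})_{\downarrow U}$. I would show this total is accounted for entirely by $Y_{[n]}$ (which contributes one copy of $k_U$) and by the genuinely endotrivial part — but since $Y_{[(r-1)e,e]}$ is the module whose endotriviality is under investigation elsewhere (Lemma \ref{lem:re7}), the clean statement to prove here is the stronger fact that each $Y_{[n-j,j]}$, $1\le j<e$, restricts to $U$ \emph{without any} trivial summand, i.e.\ freely. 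For this I would use that $Y_{[n-j,j]}$ is a summand of $k_{\whl([n-j,j])}^{\uparrow\whg}$, apply Mackey to restrict to $U$, and observe that for $j<e$ the block-$X$ element $X$ cannot stabilize any coset of a Levi of type $[n-j,j]$: such a Levi stabilizes a $j$-dimensional subspace with $0<j<e$, but $U$ acts on the natural module as $r$ copies of an irreducible $e$-dimensional module, hence fixes no subspace of dimension strictly between $0$ and a multiple of $e$. Therefore every double coset $UxW$ (with $W=\whl([n-j,j])$) has $\ls{x}{W}\cap U=\{1\}$, so $(k_W^{\uparrow\whg})_{\downarrow U}$ is free, and a fortiori so is its direct summand $(Y_{[n-j,j]})_{\downarrow U}$.

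The main obstacle I anticipate is making the ``$U$ fixes no intermediate subspace'' argument fully rigorous in the double-coset language: one must be careful that a double coset $UxW$ being $U$-stable on the left is exactly the condition $\ls{x^{-1}}{U}\subseteq W$, and then that membership in a Levi $W=\whl([n-j,j])$ of block-diagonal form forces $\ls{x^{-1}}{U}$ to preserve the ambient $(n-j)+j$ decomposition — which is impossible for a conjugate of $U$ since all such conjugates act with all Jordan blocks (over $\bK$) of size dividing the $e$-dimensional irreducible pattern, never splitting off a space of dimension $j\not\equiv 0\pmod e$. Once this is pinned down, freeness of $(Y_{[n-j,j]})_{\downarrow U}$ is immediate from the Mackey decomposition, and the final sentence of the lemma — no endotrivial summands of $Y_{[n-j,j]}$ or of its restriction to $G$ — follows because an endotrivial $kG$-module must restrict to $U$ with a trivial summand, which a free module does not have.
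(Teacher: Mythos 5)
Your core argument (paragraphs 3--4) is essentially the paper's proof: you show $U\cap\ls{x}{\whl(n-j,j)}=\{1\}$ for every $x\in\whg$ and then apply the Mackey formula, with the paper deriving the triviality of the intersection from the fact that $p\nmid|\GL(j,q)|$ forces the $j\times j$ block of a conjugate of $X$ to be the identity (contradicting that $X$ has no eigenvalue $1$), while you phrase the same fact via $U$-invariant subspaces having dimension divisible by $e$. Your first two paragraphs, which introduce $\whcl$ and begin counting $U$-stable cosets via $N_{\whcl}(U)$, are a detour into the content of Lemma~\ref{lem:re9} and play no role in this lemma.
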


\begin{proof}
Let $\wtl = \whl(n-j, j) \cong \GL(n-j,q) \times \GL(j,q)$. 
We claim that for any $x \in \whg$, $U \cap x\wtl x^{-1} = \{1\}$.
Otherwise the matrix $x^{-1}Xx$ would have 
the form of diagonal blocks $X_1 \in \GL(n-j,q)$ and $X_2 \in 
\GL(j,q)$. However, the order of $\GL(j,q)$ is not divisible by $p$
and hence, $X_2$ is the identity matrix. This is a contradiction
because $X$ was chosen to have no eigenvalues equal to 1. This 
proves the claim. The lemma now follows by an easy argument using
the Mackey formula. 
\end{proof}

\begin{lemma}\label{lem:re9}
The restriction of the Young module $Y = Y_{[(r-1)e,e]}$ to $U$ 
has at least two trivial direct summands. That is, 
$Y_{\downarrow U} \cong k^a \oplus kU^b$, for some multiplicities
$a$ and $b$ where $a > 1$. 
\end{lemma}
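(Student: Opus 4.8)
The plan is to exploit the fact that $k_\whcl^{\uparrow\whg}$ is the permutation module on the cosets $\whg/\whcl$, where $\whcl = \GL((r-1)e,q)\times\GL(e,q)$ is the stabilizer of a fixed ordered pair $(W_1,W_2)$ of complementary subspaces of $\bfq^{n}$ with $\dim W_1 = (r-1)e$ and $\dim W_2 = e$; thus $\whg/\whcl$ is identified with the set of all such ordered pairs. Since $U=\langle X\rangle$ has order $p$, decomposing this permutation module over $U$-orbits gives
\[
(k_\whcl^{\uparrow\whg})_{\downarrow U}\;\cong\; k^{f}\;\oplus\;(kU)^{(|\whg/\whcl|-f)/p},
\]
where $f$ is the number of $U$-fixed cosets, i.e.\ the number of ordered pairs $(V_1,V_2)$ of complementary subspaces of $\bfq^n$ of dimensions $(r-1)e$ and $e$ with $XV_1=V_1$ and $XV_2=V_2$. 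In particular the indecomposable summands of $(k_\whcl^{\uparrow\whg})_{\downarrow U}$ are all isomorphic either to $k$ or to the free module $kU$.

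Next I would produce a lower bound for $f$. Since $X$ is the block-diagonal matrix with $r$ equal blocks $A$, and $A$ is the companion matrix of the minimal polynomial of $u$ over $\bfq$, the space $\bfq^{n}$ with $X$ acting is isomorphic, as an $\bfq[X]$-module, to $\bK^{r}$ with $u\in\bK^{\times}$ acting as a scalar. Because $u$ generates $\bK$ as an $\bfq$-algebra (minimality of $e$, cf.\ Lemma \ref{lem:glebasics}), an $\bfq$-subspace is $X$-invariant precisely when it is a $\bK$-subspace, and a $\bK$-subspace of $\bK$-dimension $d$ has $\bfq$-dimension $de$. Hence $f$ equals the number of ordered pairs $(V_1,V_2)$ of complementary $\bK$-subspaces of $\bK^{r}$ with $\dim_{\bK}V_1=r-1$ and $\dim_{\bK}V_2=1$. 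As $r\ge 3$, the three coordinate lines $\bK e_1,\bK e_2,\bK e_3$, each taken with an obvious complement, already furnish three distinct such pairs, so $f\ge 3$. (In fact $f=q^{e(r-1)}(q^{er}-1)/(q^{e}-1)$, but only the inequality is needed.)

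Finally I would combine this with Lemma \ref{lem:re8} and the Young decomposition
\[
k_\whcl^{\uparrow\whg}\;\cong\; Y\;\oplus\;\bigoplus_{j=1}^{e-1}(Y_{[n-j,j]})^{a_j}\;\oplus\;k,
\qquad Y=Y_{[(r-1)e,e]},
\]
noted above. By Lemma \ref{lem:re8} each $Y_{[n-j,j]}$ restricts to a free $kU$-module and so contributes no trivial summand, while the summand $k$ contributes exactly one; hence $Y_{\downarrow U}$ must account for the remaining $f-1\ge 2$ trivial summands of $(k_\whcl^{\uparrow\whg})_{\downarrow U}$. Since $Y_{\downarrow U}$ is itself a direct summand of $k^{f}\oplus (kU)^{\ast}$, the Krull--Schmidt theorem forces $Y_{\downarrow U}\cong k^{a}\oplus(kU)^{b}$ with $a=f-1>1$, which is the assertion. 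The only point requiring any care is the translation of ``$X$-invariant $\bfq$-subspace'' into ``$\bK$-subspace'' together with the dimension bookkeeping; the rest is a routine orbit count, so I do not expect a serious obstacle here.
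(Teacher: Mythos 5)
Your proof is correct, and it arrives at the same final bound as the paper but by a genuinely different counting argument. Both proofs reduce the claim to showing that $(k_\whcl^{\uparrow \whg})_{\downarrow U}$ has at least three trivial direct summands and then subtract off the contributions of $k$ and the $Y_{[n-j,j]}$ using Lemma \ref{lem:re8} and Krull--Schmidt. The paper does the counting algebraically: it applies the Mackey formula and exhibits a subgroup $\Sigma \cong \CS_r \subseteq N_G(U)$ permuting the $e\times e$ blocks, with $\Sigma \cap \whcl \cong \CS_{r-1}$, so that the elements of $\Sigma$ give $r \geq 3$ distinct $U$-fixed left cosets of $\whcl$. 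You instead identify the permutation module $k_\whcl^{\uparrow\whg}$ with the $\bfq$-span of ordered pairs of complementary subspaces of dimensions $(r-1)e$ and $e$, observe that $\bfq^n$ as an $\bfq[X]$-module is a free rank-$r$ module over the field $\bfq[X]\cong\bK$, and so the $U$-fixed cosets are exactly the pairs of complementary $\bK$-subspaces of $\bK^r$ of $\bK$-dimensions $r-1$ and $1$; for $r\geq 3$ the coordinate lines already give three. Your route has the advantage of producing the exact fixed-point count $q^{e(r-1)}(q^{er}-1)/(q^e-1)$, though only the inequality is used, and it avoids reasoning about double cosets represented in $N_G(U)$. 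The paper's route stays closer to the surrounding Mackey-formula machinery. Both are complete and correct.
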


\begin{proof}
Consider the Mackey formula
\[ 
(k_\whcl^{\uparrow \whg})_{\downarrow U} \quad \cong \quad 
\sum_{x\in [U\backslash \whg/\whcl]} x \otimes k_{U \cap \whcl^x}^{\uparrow U}
\]
Clearly, the multiplicity of $k_U$ as a direct summand is 
precisely the number of $U$-$\whcl$ double cosets represented
by elements in $N_G(U)$. Note that if $x \in N_G(U)$ then
$Ux\whcl = x\whcl$. Now $N_G(U)$ contains a subgroup $\Sigma
\cong \CS_r$. This subgroup is in the normalizer of the Levi
subgroup $\whl(e,e, \dots, e)$ of block diagonal $e\times e$
invertible matrices, where the conjugation action of $\Sigma$
permutes the blocks. The intersection of $\Sigma$ with $\whcl$
is isomorphic to $\CS_{r-1}$. Thus we have that $\whcl$ has 
at least $\vert \Sigma \vert/\vert \Sigma \cap \whcl \vert 
= r$ distinct left 
cosets represented by element of $N_G(U)$. Hence, 
the multiplicity of $k_U$ as a direct summand of 
$(k_\whcl^{\uparrow \whg})_{\downarrow U}$ is at least 
$r \geq 3$.

By Lemma \ref{lem:re8}, the 
multiplicity of $k_U$ as a direct summand of 
$(Y_{[n-j,j]})_{\downarrow U}$
is zero for $1 \leq j < e$. The multiplicity $k_U$ in 
$(Y_{[n]})_{\downarrow U} \cong k$ is exactly one. So the lemma 
follows from the decomposition of $k_\whcl^{\uparrow \whg}$ as 
a sum of Young modules. 
\end{proof}

The above results are sufficient to prove the main theorem of
this section.

\begin{thm} \label{thm:re}
Suppose that $n = re$ for $e > 1$ and $r\geq2$,
and that $G = \SL(n,q)$. Then $TT(G) = \{0\}$.
\end{thm}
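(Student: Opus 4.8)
The plan is to combine the two cases $r=2$ and $r\geq 3$, which have already been handled by the preceding results, into a single statement. For $r=2$ the claim is exactly Theorem~\ref{thm:2e}, so nothing remains to do there. For $r\geq 3$ the work is essentially complete in Lemmas~\ref{lem:re1}--\ref{lem:re9}, and the proof should simply assemble them. First I would recall from Lemma~\ref{lem:re4} that any $kG$-module $M$ with trivial Sylow restriction is a direct summand of $k_\CL^{\uparrow G}$, and by Lemma~\ref{lem:re6} this module is $(k_\whcl^{\uparrow\whg})_{\downarrow G}$. So it suffices to show that the only indecomposable endotrivial direct summand of $k_\whcl^{\uparrow\whg}$, restricted to $G$, is the trivial module.

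Next I would invoke the Young module decomposition
\[
k_\whcl^{\uparrow \whg} \ \cong \ Y_{[(r-1)e,e]} \oplus
\ \bigoplus_{j = 1}^{e-1} (Y_{[n-j,j]})^{a_j} \ \oplus \ Y_{[n]},
\]
with $Y_{[n]}=k$. By Lemma~\ref{lem:re8}, none of the summands $Y_{[n-j,j]}$ for $1\leq j<e$ has an endotrivial direct summand (even after restriction to $G$), since their restrictions to $U=\langle X\rangle$ are free. Thus the only possible non-trivial endotrivial constituent must come from $(Y_{[(r-1)e,e]})_{\downarrow G}$. By Lemma~\ref{lem:re7}, if this module has an endotrivial direct summand at all, then it is indecomposable and $Y_{[(r-1)e,e]}$ itself is endotrivial as a $k\whg$-module — in particular its restriction to the cyclic $p$-subgroup $U$ must be of the form $k_U\oplus(\text{free})$, i.e.\ have exactly one trivial direct summand.

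Finally I would apply Lemma~\ref{lem:re9}, which says that $(Y_{[(r-1)e,e]})_{\downarrow U}$ has at least two trivial direct summands. This contradicts the conclusion of the previous paragraph, so $(Y_{[(r-1)e,e]})_{\downarrow G}$ has no endotrivial direct summand. Hence the only indecomposable endotrivial direct summand of $k_\whcl^{\uparrow\whg}$ (hence of $k_\CL^{\uparrow G}$) is $k$, so every indecomposable endotrivial $kG$-module with trivial Sylow restriction is trivial, which by definition says $TT(G)=\{0\}$. I do not anticipate a genuine obstacle here: the content is all in the lemmas, and the only subtlety worth stating carefully is the reduction via Clifford theory (Lemma~\ref{lem:re7}) that lets us pass between a potential endotrivial summand of the restriction $(Y_{[(r-1)e,e]})_{\downarrow G}$ and the endotriviality of the whole $k\whg$-module $Y_{[(r-1)e,e]}$, which is what makes the $U$-restriction count in Lemma~\ref{lem:re9} applicable.
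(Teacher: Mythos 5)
Your proposal is correct and follows the same route as the paper: dispose of $r=2$ via Theorem~\ref{thm:2e}, then for $r\geq 3$ use Lemma~\ref{lem:re4} to land in $k_{\CL}^{\uparrow G}$, transfer via Lemma~\ref{lem:re6}, apply the Young module decomposition, rule out $Y_{[n-j,j]}$ for $1\leq j<e$ by Lemma~\ref{lem:re8}, reduce to $Y_{[(r-1)e,e]}$ being endotrivial over $\whg$ via Lemma~\ref{lem:re7}, and contradict with Lemma~\ref{lem:re9}. (One minor precision: an endotrivial $kU$-module for $U$ cyclic of order $p$ has \emph{at most} one trivial direct summand, not necessarily exactly one, since the restriction could be $\Omega(k_U)\oplus\text{free}$; the contradiction with Lemma~\ref{lem:re9} works either way.)
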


\begin{proof}
First, for $r=2$ the claim is proved in Theorem~\ref{thm:2e}. Now 
suppose that $r>2$ and that $M$ is an indecomposable endotrivial
$kG$-module with trivial Sylow restriction and $M \not\cong k$. Then by
Lemma \ref{lem:re4}, $M$ is a direct summand of $k_\CL^{\uparrow G}$.
Then by Lemmas \ref{lem:re6}, \ref{lem:re7} and \ref{lem:re8}, $M$ is 
isomorphic to the restriction of $Y_{[(r-1)e,e]}$ to $G$ and 
$Y_{[(r-1)e,e]}$ must be an endotrivial $k\whg$-module. However,
by Lemma \ref{lem:re9}, this is not the case. 
\end{proof}

%%%%%%%%%%%%%%%%%%%%  section    %%%%%%%%%%%%%%%%%%%%%%%%%%

\section{Endotrivial modules for $\SL(re+f, q)$,
$2 \leq r < p$, $1 \leq f <e$}\label{sec:case-re+s}
In this section, we present the final case of the endotrivial 
modules of $\SL(n,q)$ under the assumption that $S$ is abelian, namely,
the case in which $n= re+f$ with $r<p$ and $f \geq1$. In the proof we again
use Balmer's method for computing the kernel of the restriction
map. Indeed, we use the method twice in different contexts.  On one
occasion, we show that $A(G,H) = \{1\}$ where $H$ is the commutator 
subgroup of the maximal parabolic subgroup $P(n-1,1)$. The proof
is by induction on $f$, the result being known for the 
case that $f=0$ by the main theorem of the last section.
The theorem that we prove is slightly more general than what
has been stated above. We hope this generality will be useful
in later work. 

Because we assume that $f > 1$, the power of $p$
dividing $\vert \SL(n-1, q)\vert $ is the same 
as that dividing $\vert \SL(n,q)\vert$, 
so that the Levi subgroup $L = L(n-1,1)$ contains a Sylow 
$p$-subgroup of $G$. For notation, let $P = P(n-1,1)$, the maximal parabolic 
subgroup of $G = \SL(n,q)$ consisting of upper block 
triangular invertible matrices with diagonal blocks of size
$(n-1) \times (n-1)$ and $1 \times 1$, having the product
of their determinants equal to $1$. So an element of 
$P$ has the form 
\[ 
X \quad := \quad \begin{bmatrix} A & v \\ 0 & \zeta \end{bmatrix}
\]
where $A \in \GL(n-1,q)$, $\zeta \in k^\times$ and $v \in \bV$, 
the natural module for $\GL(n-1,q)$ (the $k$-vector space of 
dimension $n-1$ with elements written as 
column vectors). Also, $\Det(A) = \zeta^{-1}$. 
Let $H = [P, P]$, the commutator subgroup of $P$.
It is not difficult to prove that $H$ consists of all elements of
the above form $X$ with the additional stipulations that 
$A \in \SL(n-1,q)$ and $\zeta = 1$. The element $v$ can still be
any element of $\bV$. 

Let $V \subseteq H$ be the set of all 
elements having the above form with $A = I_{n-1}$ the identity 
matrix and $\zeta = 1$. So the map $\varphi:\bV \to V$ that sends 
$v \in \bV$ to $\left [ \begin{smallmatrix} I_{n-1} &v \\ 0 & 1 
\end{smallmatrix} \right]$ is an isomorphism of abelian groups. 
Indeed, if $\vartheta: \SL(n-1, q) \to H$ is the injection that 
sends $A$ to 
$\left[ \begin{smallmatrix} A & 0 \\ 0 & 1 
\end{smallmatrix} \right]$, then we have that 
$\varphi(Av) = \vartheta(A) \varphi(v) \vartheta(A)^{-1}$. This 
fact is very useful in one of the proofs that follow.  
Finally, let $L \cong \SL(n-1,q)$ denote the 
image of $\vartheta$. Thus, $V$ is a normal subgroup of $H$ 
with quotient $H/V \cong L$. 

\begin{lemma} \label{lem:LLcosets}
There are exactly two $L$-$L$-double cosets in $H$, and they
are represented by
~$1$ and the element $w = \varphi((0,0, \dots, 1))$. 
\end{lemma}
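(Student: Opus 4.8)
The plan is to use the semidirect-product description of $H$ set up just before the lemma. Recall that $\varphi\colon\bV\to V$ and $\vartheta\colon\SL(n-1,q)\to H$ are the indicated maps, that $V\trianglelefteq H$ with $H/V\cong L$ via $\vartheta$, and that $\varphi(Av)=\vartheta(A)\varphi(v)\vartheta(A)^{-1}$; thus $L$ acts on $V$ through the natural action of $\SL(n-1,q)$ on $\bV$. Multiplying out blocks, every element of $H$ is uniquely of the form $\varphi(v)\vartheta(A)=\left[\begin{smallmatrix}A&v\\0&1\end{smallmatrix}\right]$ with $v\in\bV$ and $A\in\SL(n-1,q)$, so that $H=V\rtimes L$.

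First I would observe that since $\vartheta(A)\in L$, the double coset $L\varphi(v)\vartheta(A)L$ equals $L\varphi(v)L$; hence it suffices to analyse the cosets $L\varphi(v)L$ for $v\in\bV$. Using the conjugation identity above,
\[
L\varphi(v)L=\{\,\varphi(Bv)\,\vartheta(C)\ \mid\ B,C\in\SL(n-1,q)\,\},
\]
so $L\varphi(v)L$ depends only on the $\SL(n-1,q)$-orbit of $v$ in $\bV$; conversely, comparing the top-right block of $\varphi(v')$ against $\varphi(Bv)\vartheta(C)$ forces $C=I$ and $v'=Bv$, so the double coset determines the orbit. This yields a bijection between the $L$-$L$ double cosets in $H$ and the orbits of $\SL(n-1,q)$ on $\bV$.

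It then remains to count those orbits. Here $n-1\geq2$ (as $n=re+f$ with $r\geq2$, $f\geq1$, $e\geq2$), and $\SL(n-1,q)$ is transitive on $\bV\smallsetminus\{0\}$: the stabilizer in $\GL(n-1,q)$ of a nonzero vector contains elements of every determinant, so $\GL(n-1,q)=\SL(n-1,q)\cdot\mathrm{Stab}_{\GL(n-1,q)}(v)$, and $\GL(n-1,q)$ already acts transitively on nonzero vectors. Hence there are exactly two orbits, namely $\{0\}$ and $\bV\smallsetminus\{0\}$, corresponding to the double cosets of $\varphi(0)=1$ and of $w=\varphi((0,\dots,0,1))$, as claimed.

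I do not anticipate a real obstacle: the only steps needing care are the uniqueness of the factorization $\varphi(v)\vartheta(A)$ (immediate from block multiplication) and the transitivity of $\SL(n-1,q)$ on nonzero vectors, both of which are elementary.
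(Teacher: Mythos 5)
Your proof is correct and follows the same route as the paper's: use the semidirect-product decomposition $H=V\rtimes L$ together with the identity $\varphi(Av)=\vartheta(A)\varphi(v)\vartheta(A)^{-1}$ to reduce the double-coset count to the orbits of $\SL(n-1,q)$ on $\bV$, then invoke transitivity on nonzero vectors. You simply spell out the bijection and the transitivity argument in a bit more detail than the paper does.
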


\begin{proof}
Let $\hat{w} = (0, \dots, 0,1) \in \bV$ a preimage of $w$ under $\varphi$.
Because $\bV$ is an irreducible module and $\SL(n-1,q)$ acts 
transitively on $\bV$, for any nonzero $v \in V$ with preimage 
$\hat{v}$ in $\bV$, we have that there is some $A$ in $\SL(n-1,q)$
such that $A\hat{v} = \hat{w}$ and $\vartheta(A)v = w$. On double 
cosets this means that $LvL = LwL$ for all $v \neq 1$ in $V$. 
Since $H = VL$, we have proved the lemma. 
\end{proof}

\begin{prop} \label{prop:tth}
The restriction map $T(H) \to T(L)$ is 
injective. Hence, if $TT(L) = \{0\}$, then $TT(H) = \{0\}$.
\end{prop}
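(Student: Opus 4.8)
The plan is to deduce this from Balmer's theorem (Theorem~\ref{thm:balmer}). Since $H = V\rtimes L$, we have $[H:L]=|V|=q^{n-1}$, which is prime to $p$; hence $L$ contains a Sylow $p$-subgroup of $H$, and Theorem~\ref{thm:balmer} identifies $\Ker\bigl(T(H)\to T(L)\bigr)$ with the group $A(H,L)$ of weak $L$-homomorphisms. So it suffices to show $A(H,L)=\{1\}$. The final assertion is then immediate: restriction carries torsion to torsion, so an injection $T(H)\hookrightarrow T(L)$ with $TT(L)=\{0\}$ forces $TT(H)=\{0\}$.

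To prove $A(H,L)=\{1\}$ I would apply Proposition~\ref{prop:balmerapp1}. By Lemma~\ref{lem:LLcosets} the only $L$-$L$ double coset with representative outside $L$ is $LwL$, where $w=\varphi((0,0,\dots,1))$, so it is enough to verify condition~(b) of that proposition for $w$, i.e.\ to exhibit a nontrivial $p$-subgroup $Q\subseteq L$ with $w\in[N_H(Q),N_H(Q)]$. Choose $u\in\GL(e,q)$ of order $p$; here $e>1$ (because $1\le f<e$), so $\Det(u)=1$ by Lemma~\ref{lem:glebasics}(e), and $D:=\left[\begin{smallmatrix} u&0\\ 0&I_{n-1-e}\end{smallmatrix}\right]$ lies in $\SL(n-1,q)$. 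Put $Q=\vartheta(\langle D\rangle)$. By minimality of $e$, the element $u$ has no nonzero fixed vector on $\bF_q^e$, so the $D$-fixed subspace of $\bV$ is exactly the set $V_0$ of vectors whose first $e$ coordinates vanish. Using the identity $\varphi(Av)=\vartheta(A)\varphi(v)\vartheta(A)^{-1}$ one checks that $\varphi(v)$ centralizes $Q$ for each $v\in V_0$, and that $\vartheta\bigl(\operatorname{diag}(I_e,M)\bigr)$ normalizes $Q$ and acts on $\varphi(V_0)$ through the natural action of $M\in\SL(n-1-e,q)$ on $\bF_q^{n-1-e}$; thus $N_H(Q)$ contains the subgroup $\varphi(V_0)\rtimes\vartheta\bigl(\{\operatorname{diag}(I_e,M):M\in\SL(n-1-e,q)\}\bigr)$.

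The decisive point is that $n-1-e=(r-1)e+(f-1)\ge 2$ (as $r\ge 2$, $f\ge1$, $e\ge2$), so $\bF_q^{n-1-e}$ is an irreducible, nontrivial $\SL(n-1-e,q)$-module, and hence equals $[\SL(n-1-e,q),\bF_q^{n-1-e}]$. Combined with the commutator identity $[\varphi(v),\vartheta(A)]=\varphi((I-A)v)$, this gives $\varphi(V_0)\subseteq[N_H(Q),N_H(Q)]$; and since $n-1>e$ the vector $(0,0,\dots,1)$ lies in $V_0$, so $w\in[N_H(Q),N_H(Q)]$, as required. Proposition~\ref{prop:balmerapp1} then yields $A(H,L)=\{1\}$, finishing the proof. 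The main thing to get right is the bookkeeping that pins down enough of $N_H(Q)$ — the $D$-fixed subspace and the action of the ``lower'' copy of $\SL$ on it — together with the standard fact that $\SL(m,q)$ has no fixed quotient on its natural module when $m\ge2$; everything else is formal.
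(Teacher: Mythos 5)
Your proof is correct and follows essentially the same route as the paper's: Balmer's theorem plus Proposition~\ref{prop:balmerapp1}, applied with the same $p$-subgroup $Q=\langle\vartheta(D)\rangle$ and the same double-coset representative $w$ from Lemma~\ref{lem:LLcosets}. Where the paper simply invokes the copy $J\cong\SL(n-e,q)$ in the centralizer (which in fact sits in $G$ rather than in $H$), your explicit computation inside $\varphi(V_0)\rtimes\vartheta\bigl(\{\operatorname{diag}(I_e,M):M\in\SL(n-1-e,q)\}\bigr)\subseteq N_H(Q)$, together with the observation that $\bF_q^{n-1-e}$ has no nonzero coinvariants under $\SL(n-1-e,q)$, supplies the precise justification of that step.
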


\begin{proof}
We have an element of order $p$ of the form 
$U = \left[ \begin{smallmatrix} u & 0 \\ 0 & I_{n-e}
\end{smallmatrix} \right]$ in $H$,
where $u$ is an element of order $p$ in $\SL(e,q)$. The 
centralizer contains the subgroup $J$ of all matrices of 
the form $\left[ \begin{smallmatrix} I_{e} & 0 \\ 0 & A 
\end{smallmatrix} \right]$ for $A$ in $\SL(n-e,k)$. The 
element $w$ of the previous lemma is in $J$ and in the 
commutator subgroup of the centralizer of $U$.
So by Lemma \ref{lem:LLcosets} and Proposition 
\ref{prop:balmerapp1}, the group $A(H,L)$ of weak $L$-homomorphisms is
trivial. So by Theorem \ref{thm:balmer}, the restriction $T(H) \to T(L)$ is
injective.  
\end{proof}

\begin{lemma} \label{lem:HPcosets}
The left and right cosets of $H$ in $P$ are represented 
by elements of the form 
\[ 
a_\zeta \quad = \quad \begin{bmatrix} I_{n-2} & 0 \\ 0 & Y
\end{bmatrix} \qquad \text{where} \qquad 
Y = \begin{bmatrix} \zeta & 0 \\ 0 & \zeta^{-1} \end{bmatrix},
\] 
for all $\zeta \in \bfq^\times$. 
\end{lemma}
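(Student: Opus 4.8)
The plan is to realize $H$ as the kernel of an explicit homomorphism out of $P$ and then read off the coset representatives from that description. First I would consider the map
\[
\pi\colon P \longrightarrow \bfq^\times, \qquad
X = \begin{bmatrix} A & v \\ 0 & \zeta \end{bmatrix} \longmapsto \zeta,
\]
sending an element of $P$ to its lower-right $1\times 1$ block. Multiplying two block upper-triangular matrices of this shape multiplies their lower-right entries, so $\pi$ is a group homomorphism. By the description of $H = [P,P]$ recalled just above the lemma, $X$ lies in $H$ exactly when $\zeta = 1$ (equivalently $A \in \SL(n-1,q)$, with $v$ arbitrary), so $\ker\pi = H$. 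In particular $H \trianglelefteq P$ — which is anyway clear since $H$ is a commutator subgroup — so the left and right coset decompositions of $H$ in $P$ coincide, and $\pi$ induces an isomorphism $P/H \cong \pi(P) \subseteq \bfq^\times$.

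Next I would check that the proposed representatives actually lie in $P$. The matrix $a_\zeta$ is block diagonal with diagonal blocks $I_{n-2}$ and $Y = \begin{bmatrix}\zeta & 0\\ 0 & \zeta^{-1}\end{bmatrix}$, so $\Det(a_\zeta) = \zeta\cdot\zeta^{-1} = 1$ and $a_\zeta$ has the required block upper-triangular form with $(n-1)\times(n-1)$ corner $A = \operatorname{diag}(1,\dots,1,\zeta)$ and lower-right entry $\zeta^{-1}$; since $\Det(A) = \zeta = (\zeta^{-1})^{-1}$, the defining determinant condition of $P$ is satisfied, so $a_\zeta \in P$. Moreover $\pi(a_\zeta) = \zeta^{-1}$.

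Finally, as $\zeta$ ranges over $\bfq^\times$ the values $\pi(a_\zeta) = \zeta^{-1}$ range bijectively over all of $\bfq^\times$; hence $\pi(P) = \bfq^\times$, $[P:H] = q-1$, and $\{a_\zeta \mid \zeta \in \bfq^\times\}$ maps bijectively onto $P/H$. Therefore these elements form a complete and irredundant set of representatives for the (left or right) cosets of $H$ in $P$, which is the assertion of the lemma. I do not expect any real obstacle here: the only points needing care are matching the kernel of $\pi$ against the explicit description of $[P,P]$ and doing the (trivial) determinant bookkeeping for the $a_\zeta$.
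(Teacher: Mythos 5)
Your proof is correct and is precisely the ``direct calculation'' the paper leaves to the reader: you identify $H$ as the kernel of the corner-entry homomorphism $\pi\colon P\to\bfq^\times$ (so $H\trianglelefteq P$ and left and right cosets coincide), and then check that $\zeta\mapsto a_\zeta$ hits each fiber of $\pi$ exactly once. No discrepancies with the paper's argument.
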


\begin{proof}
This is a direct calculation. 
\end{proof}

\begin{lemma} \label{lem:PPcosets}
There are exactly two $P$-$P$ double cosets in G, that are 
represented by the elements ~1 and 
\[ 
b \quad = \quad \begin{bmatrix} I_{n-2} & 0 \\ 0 & A
\end{bmatrix} \qquad \text{where} \qquad 
A = \begin{bmatrix} 0 & 1 \\ -1 & 0 \end{bmatrix},
\] 
\end{lemma}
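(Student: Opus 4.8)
The plan is to recognize $P$ as the stabilizer in $G$ of a hyperplane and then convert the double-coset count into an orbit count. Let $W=\langle e_1,\dots,e_{n-1}\rangle\subseteq k^n$ be the hyperplane spanned by the first $n-1$ standard basis vectors. Every matrix of the block form displayed for $P$ preserves $W$, and conversely any $g\in G$ with $g(W)\subseteq W$ has precisely that block-upper-triangular shape; hence $P=\operatorname{Stab}_G(W)$. Moreover the $\GL(n,q)$-stabilizer of $W$ is the full group of such block-upper-triangular matrices, whose determinant map onto $\bfq^\times$ is surjective (the $(n,n)$-entry $\zeta$ is arbitrary). Thus $\SL(n,q)\cdot\operatorname{Stab}_{\GL(n,q)}(W)=\GL(n,q)$, so $G=\SL(n,q)$ already acts transitively on the set $\Omega$ of all hyperplanes of $k^n$, giving a $G$-isomorphism $G/P\cong\Omega$. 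Consequently $P\backslash G/P$ is in bijection with the $P$-orbits on $\Omega$, equivalently with the $G$-orbits on ordered pairs $(W,W')$ of hyperplanes, the double coset $PgP$ corresponding to the orbit of $(W,gW)$.

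Next I would classify those pairs. The only $G$-invariant of an ordered pair $(W,W')$ of hyperplanes in $k^n$ is $\dim(W\cap W')$, which is $n-1$ if $W'=W$ and $n-2$ if $W'\ne W$; both values occur (using $n\ge 2$), so there are at least two orbits, and there are at most two provided $G$ is transitive on pairs with $W'\ne W$. For this last point, $\GL(n,q)$ visibly carries any such pair to the model pair $W=\langle e_1,\dots,e_{n-1}\rangle$, $W'=\langle e_1,\dots,e_{n-2},e_n\rangle$, whose $\GL(n,q)$-stabilizer is the group of matrices that are block-diagonal with a $\GL(n-2,q)$ block on $\langle e_1,\dots,e_{n-2}\rangle$ and scalars on $\langle e_{n-1}\rangle$ and $\langle e_n\rangle$; its determinant again fills out $\bfq^\times$, so the $\SL(n,q)$-orbit coincides with the $\GL(n,q)$-orbit. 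Therefore $P\backslash G/P$ has exactly two elements.

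Finally I would pin down the representatives. The diagonal orbit, that of $(W,W)$, is represented by $g=1$. For the other orbit I claim $g=b$ works: $b$ fixes $e_1,\dots,e_{n-2}$ and acts on the last two coordinates by $\left(\begin{smallmatrix}0&1\\-1&0\end{smallmatrix}\right)$, so $b(e_{n-1})=-e_n$ and $bW=\langle e_1,\dots,e_{n-2},e_n\rangle\ne W$; also $\Det(b)=\Det\left(\begin{smallmatrix}0&1\\-1&0\end{smallmatrix}\right)=1$, so $b\in\SL(n,q)=G$. Since $bW\ne W$, the pair $(W,bW)$ lies in the non-diagonal orbit, so $PbP$ is the second double coset, and $1,b$ are the claimed representatives. (Alternatively one can bypass the geometry with the Bruhat decomposition: $P\backslash G/P$ is indexed by $W_P\backslash\CS_n/W_P$, where $W_P\cong\CS_{n-1}$ is the stabilizer of the letter $n$; and $\CS_{n-1}$ has exactly two orbits on the $n$ cosets $\CS_{n-1}\backslash\CS_n$, namely $\{n\}$ and $\{1,\dots,n-1\}$, the nontrivial one being realized by the transposition $(n-1\ n)$, which lifts to the monomial matrix $b$.)

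The only step requiring genuine, if routine, attention is the descent from $\GL(n,q)$ to $\SL(n,q)$: one must verify that the $\GL(n,q)$-stabilizer of a hyperplane, and that of a pair of distinct hyperplanes, each surject onto $\bfq^\times$ under the determinant, which is exactly what makes $\SL(n,q)$ transitive in each case. Everything else is bookkeeping, and nothing is delicate here since $n\ge 3$.
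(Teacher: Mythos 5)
Your proof is correct, but it takes a genuinely different route from the paper's. The paper appeals directly to the general machinery of parabolic double cosets in a group with a BN-pair, citing Carter (\emph{Finite Groups of Lie Type}, Prop.~2.8.1(iii)): the set $P\backslash G/P$ is indexed by the distinguished double coset representatives $D_J\cap D_J^{-1}$ in the Weyl group, which here is $\{1,s_{n-1}\}$. Your main argument instead proceeds geometrically, recognizing $P$ as the stabilizer of a hyperplane and then counting $G$-orbits on ordered pairs of hyperplanes via the single invariant $\dim(W\cap W')$, with the only nontrivial step being the verification (via surjectivity of the determinant on the relevant $\GL(n,q)$-stabilizers) that transitivity descends from $\GL(n,q)$ to $\SL(n,q)$. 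This is more elementary and self-contained, requiring only linear algebra rather than BN-pair theory; the Bruhat-decomposition argument you sketch in your final parenthesis is essentially the paper's route. One small inaccuracy that does not affect the conclusion: the $\GL(n,q)$-stabilizer of the ordered pair $\bigl(\langle e_1,\dots,e_{n-1}\rangle,\ \langle e_1,\dots,e_{n-2},e_n\rangle\bigr)$ is not block-diagonal as you describe; an element of it may send $e_{n-1}$ to any vector of $W$ and $e_n$ to any vector of $W'$, so the stabilizer consists of all invertible matrices of the shape
\[
\begin{pmatrix} B & u & w \\ 0 & c & 0 \\ 0 & 0 & d \end{pmatrix}
\]
with $B\in\GL(n-2,q)$, $c,d\in\bfq^\times$, and $u,w$ arbitrary columns. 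Since the determinant of such a matrix is $\det(B)\,c\,d$, it is still clearly surjective onto $\bfq^\times$, so your descent to $\SL(n,q)$ goes through unchanged.
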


\begin{proof}
We obtain a complete set of double coset representatives from 
\cite[Prop. 2.8.1(iii)]{Ca2} as follows. Let 
$\Delta=\{\alpha_{1},\alpha_{2},\dots,\alpha_{n-1}\}$ be the simple
 roots. Set $P_{J}=P$ where 
$\Delta_{J}=\{\alpha_{1},\alpha_{2},\dots, \alpha_{n-2}\}\subseteq \Delta$.
Let $D_{J}=\{w\in W: w(\Delta_{J})\subseteq \Phi^{+}\}$ ($\Phi^{+}$ 
is the set of positive roots). Let $s_{j}=s_{\alpha_{j}}$ 
for $j=1,2,\dots,n-1$. In this case
$$
D_{J}=\{1,\ s_{n-1}, \ s_{n-2}s_{n-1},\ \dots, \ s_{1}s_{2}\cdots s_{n-1}\}.
$$
Then a set of double coset representatives are given by 
$D_{J}\cap D_{J}^{-1}=\{1,\ s_{n-1}\}$ (cf. \cite[2.7 Definition]{Ca2}).  
The matrix $b$ represents $s_{n-1}\in W$.
\end{proof}

\begin{lemma} \label{lem:HHcosets}
Every $H$-$H$ double coset in $G$ is represented by an 
element of the form 
\[
X \quad = \quad \begin{bmatrix} I_{n-2} & 0 \\ 0 & W
\end{bmatrix} \qquad \text{where} \qquad 
W \in \SL(2,q).
\] 
Any such $X$ is in the commutator subgroup of the 
subgroup $U$ in the proof of Proposition \ref{prop:tth}
\end{lemma}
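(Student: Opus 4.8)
The plan is to assemble the $H$-$H$ double coset decomposition of $G$ from the two coarser decompositions already available: the $P$-$P$ double cosets of Lemma~\ref{lem:PPcosets} and the cosets of $H$ in $P$ of Lemma~\ref{lem:HPcosets}. Since $H=[P,P]$ is characteristic, hence normal, in $P$, the representatives $a_\zeta$ of Lemma~\ref{lem:HPcosets} satisfy $Ha_\zeta=a_\zeta H$, so inside $P$ every $H$-$H$ double coset is a single right coset $Ha_\zeta$ with $\zeta\in\bfq^\times$. Combined with $G=P\sqcup PbP$ from Lemma~\ref{lem:PPcosets}, and writing $P=\bigcup_\zeta Ha_\zeta=\bigcup_\eta a_\eta H$, we get $PbP=\bigcup_{\zeta,\eta\in\bfq^\times}Ha_\zeta b a_\eta H$. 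Hence every $H$-$H$ double coset of $G$ has a representative of the form $a_\zeta$ or $a_\zeta b a_\eta$; distinct such elements may lie in the same double coset, but that does not matter for the statement.

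The next step is the routine multiplication. Each of $a_\zeta$ and $b$ is block diagonal with an $(n-2)\times(n-2)$ identity block and a $2\times2$ block, namely $Y_\zeta$ respectively $A=\left[\begin{smallmatrix}0&1\\-1&0\end{smallmatrix}\right]$, so each of the representatives above is again of the form $\left[\begin{smallmatrix}I_{n-2}&0\\0&W\end{smallmatrix}\right]$ with $W$ equal to $Y_\zeta$, to $Y_\zeta Y_\eta$, or to $Y_\zeta A Y_\eta$. Since $Y_\zeta$ and $A$ have determinant $1$ and entries in $\bfq$, so does $W$; thus $W\in\SL(2,q)$, which is the first assertion of the lemma.

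For the second assertion, recall from the proof of Proposition~\ref{prop:tth} the element $U$ of order $p$ supported on the first $e$ coordinates, together with the subgroup $J=\left\{\left[\begin{smallmatrix}I_e&0\\0&A\end{smallmatrix}\right]:A\in\SL(n-e,q)\right\}\cong\SL(n-e,q)$ of its centralizer. Under the running hypotheses $r\ge2$ and $1\le f<e$ one has $e\ge2$ and $n-2\ge2e-1\ge e$, so any $X=\left[\begin{smallmatrix}I_{n-2}&0\\0&W\end{smallmatrix}\right]$ with $W\in\SL(2,q)$ is the identity on the first $e$ coordinates and therefore lies in $J$, its $(n-e)\times(n-e)$ block being $\left[\begin{smallmatrix}I_{n-e-2}&0\\0&W\end{smallmatrix}\right]\in\SL(n-e,q)$. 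Since $n-e=(r-1)e+f\ge3$, the group $\SL(n-e,q)\cong J$ is perfect, hence $X\in J=[J,J]\subseteq[C_G(U),C_G(U)]$, the commutator subgroup of the centralizer of $U$; this establishes the second assertion.

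I do not expect a genuine obstacle here: the lemma is a bookkeeping consequence of Lemmas~\ref{lem:HPcosets} and~\ref{lem:PPcosets}. The only points that need a little care are that the passage from $PbP$ to $\bigcup_{\zeta,\eta}Ha_\zeta b a_\eta H$ uses normality of $H$ in $P$, and that in the last step one should explicitly record the numerical inequalities $e\ge2$, $n-2\ge e$, and $n-e\ge3$, which are what place $X$ inside the perfect group $J$.
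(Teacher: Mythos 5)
Your argument follows the paper's own route exactly: combine Lemma~\ref{lem:PPcosets} with Lemma~\ref{lem:HPcosets} (using normality of $H=[P,P]$ in $P$) to land on representatives $a_\zeta$ and $a_\zeta b a_\eta$, and then read off the block form. Where the paper dismisses the final assertion as ``obvious,'' you fill in the detail correctly: under the section's standing hypotheses $e\ge 2$, $r\ge 2$, $f\ge 1$ one has $n-2\ge e$ and $n-e\ge 3$, so $X$ lies in $J\cong\SL(n-e,q)\subseteq C_G(U)$, which is perfect and hence equal to its own commutator subgroup. (The stray case ``$W=Y_\zeta Y_\eta$'' in your list is not actually one of the representatives, but it is harmless since it is still in $\SL(2,q)$.)
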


\begin{proof}
Every element of $x \in G$ has either the form $x = u$
or $x = ubv$ for $u,  v \in P$ by Lemma \ref{lem:PPcosets}.
So by Lemma \ref{lem:HPcosets} we can write $u = ha_\zeta$ 
and $v = a_\eta h^\prime$ for some $h, h^\prime \in H$ and
some $\zeta, \eta \in \bfq^\times$. Thus the double coset 
representatives can be taken to have the form $a_\zeta$ 
for some $\zeta$, or $a_\zeta b a_\eta$ for some $\zeta$ and 
some $\eta$. This proves the lemma as the last statement is obvious. 
\end{proof}

At this point we can prove the main theorem of the section.
We regard $\SL(n-1,q)$ as the subgroup of $\SL(n,q)$ consisting
of block diagonal matrices of sizes $(n-1) \times (n-1)$ and 
$1 \times 1$. This is exactly as $L \subseteq G$ in the notation
above.

\begin{thm} \label{thm:restrSL}
Suppose that $\SL(n-1,q)$ contains a Sylow $p$-subgroup of 
$G$ and the Sylow $p$-subgroup has $p$-rank at least two. 
Then the group $A(\SL(n,q), \SL(n-1,q))$ of weak
$\SL(n-1,q)$-homomorphisms of $\SL(n,q)$ is trivial, and so the
restriction map $T(\SL(n,q)) \to T(\SL(n-1,q))$ is injective. 
\end{thm}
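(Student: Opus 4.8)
The plan is to sandwich $\SL(n-1,q)$ between $G=\SL(n,q)$ and the commutator subgroup $H=[P,P]$ of the maximal parabolic $P=P(n-1,1)$, handling the two resulting restriction maps separately. Write $L$ for the copy of $\SL(n-1,q)$ inside $G$ (the image of $\vartheta$), so that $L\subseteq H\subseteq G$ and, by transitivity of restriction, the map $T(G)\to T(L)$ is the composite $T(G)\to T(H)\to T(L)$. Since $H$ contains $L$, it also contains a Sylow $p$-subgroup of $G$ by hypothesis; moreover $[G:H]=q^{n}-1$ and $[H:L]=q^{n-1}$ are both prime to $p$, the former because $e\nmid n$ forces $p\nmid q^{n}-1$. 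Hence Theorem~\ref{thm:balmer} applies to both pairs $(G,H)$ and $(H,L)$, and it is enough to establish $A(G,H)=\{1\}$ and $A(H,L)=\{1\}$. The equality $A(H,L)=\{1\}$ is precisely the content of Proposition~\ref{prop:tth}, so the real work is $A(G,H)=\{1\}$.

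To prove $A(G,H)=\{1\}$, I would take an arbitrary $u\in A(G,H)$ and invoke Lemma~\ref{lem:kerbasics}(a): $u$ is constant on $H$-$H$ double cosets, so it suffices to show $u(X)=1$ for one chosen representative $X$ of each double coset. By Lemma~\ref{lem:HHcosets} each representative may be taken block-diagonal, with an $(n-2)\times(n-2)$ identity block and a $2\times2$ block in $\SL(2,q)$; and, as is also recorded there, every such $X$ lies in the commutator subgroup of the subgroup $J$ (the copy of $\SL(n-e,q)$ introduced in the proof of Proposition~\ref{prop:tth}). Since $J$ centralizes the order-$p$ subgroup $Q=\langle U\rangle\subseteq H$ of that same proof, we get the containments $[J,J]\subseteq[C_{G}(Q),C_{G}(Q)]\subseteq[N_{G}(Q),N_{G}(Q)]$, so Lemma~\ref{lem:kerbasics}(c) forces $u(X)=1$. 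Ranging over all double cosets gives $u\equiv 1$, that is, $A(G,H)=\{1\}$.

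Putting the two pieces together, the composite $T(G)\to T(H)\to T(L)$ is injective, hence $T(\SL(n,q))\to T(\SL(n-1,q))$ is injective; applying Theorem~\ref{thm:balmer} once more, now to the pair $(G,L)$ whose index $[G:L]=q^{n-1}(q^{n}-1)$ is again prime to $p$, we conclude that $A(\SL(n,q),\SL(n-1,q))$ is trivial, which is the assertion.

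The real difficulty has in fact already been discharged by the preliminary results that this argument consumes. Lemma~\ref{lem:HHcosets}---resting on Lemmas~\ref{lem:HPcosets} and~\ref{lem:PPcosets}, the last via a Bruhat-type description of double cosets---is what collapses the $H$-$H$ double coset geometry of $\SL(n,q)$ down to a single $\SL(2,q)$ block, and Proposition~\ref{prop:tth} is what negotiates the unipotent radical of $P$ using the transitivity of $\SL(n-1,q)$ on the natural module. Given those, the remaining obstacle is purely bookkeeping: one must confirm that $H$ and $L$ really do contain a Sylow $p$-subgroup of $G$, so that Balmer's theorem is legitimately applied to each of $(G,H)$, $(H,L)$ and $(G,L)$, and that the order-$p$ subgroup $Q$ used to kill the double coset representatives lies in $H$ and is centralized by the stated copy of $\SL(n-e,q)$---with a glance at the few special linear groups that are not perfect.
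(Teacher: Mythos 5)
Your argument is correct and is essentially the paper's own proof: factor $T(\SL(n,q))\to T(\SL(n-1,q))$ through $T([P,P])$, use Lemma~\ref{lem:HHcosets} together with the Balmer machinery (you unpack Proposition~\ref{prop:balmerapp1} into Lemma~\ref{lem:kerbasics}(a) and (c), which is the same content) to kill $A(G,[P,P])$, and invoke Proposition~\ref{prop:tth} for the second leg. Your care in noting that the relevant reference group is $J\cong\SL(n-e,q)$ rather than the element $U$, and your checks that the indices are prime to $p$ and that $\SL(n-e,q)$ is perfect in this range, are small but correct clarifications of details the paper leaves implicit.
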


\begin{proof}
The hypothesis of the Sylow $p$-subgroup of $G$ assures us
that $n > 2e.$  Suppose that $H = \SL(n-1,q) \subseteq G = \SL(n,q)$.
Then we argue as in the proof of Proposition \ref{prop:tth} that by
Lemma \ref{lem:HHcosets}, a complete set of representatives
of $H$-$H$ double cosets can be chosen in the commutator subgroup of the
centralizer of a nontrivial $p$-subgroup. Thus by Proposition
\ref{prop:balmerapp1} and Theorem \ref{thm:balmer}, we conclude that
$A(G,H) = \{1\}$ and the restriction map $T(G) \to T(H)$ is
injective. So the composition of restriction maps $T(G) \to T(L)$ is
injective by Proposition \ref{prop:tth}. 
\end{proof}

\begin{cor} \label{cor:restrict-re}
Suppose that $n = re+f$ for $2 \leq r < p$ and $0 \leq f <e$.
Then $TT(\SL(n, q)) = \{0\}.$
\end{cor}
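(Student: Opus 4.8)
The plan is to induct on $f$, stripping off one dimension at a time and using Theorem~\ref{thm:restrSL} to keep the endotrivial group under control, with Theorem~\ref{thm:re} supplying the base of the induction.

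First I would settle the base case $f=0$. If $e>1$ this is exactly Theorem~\ref{thm:re}: $TT(\SL(re,q))=\{0\}$. If $e=1$, then $0\le f<e$ forces $f=0$ and $n=r$; within the range relevant to this section (where the Sylow $p$-subgroup is abelian and not cyclic, so $n=r\ge 3$) the statement is Theorem~\ref{thm:sl-pdiv}. Either way the base case is already available.

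Now take $f\ge 1$ and write $n=re+f$. The key point, already recorded at the start of this section, is that since $1\le f<e$ the multiples of $e$ not exceeding $n$ are exactly $e,2e,\dots,re$, the same as those not exceeding $n-1=re+(f-1)$; hence $|\SL(n-1,q)|$ and $|\SL(n,q)|$ have the same $p$-part, so $\SL(n-1,q)$ contains a Sylow $p$-subgroup $S$ of $\SL(n,q)$. Moreover the $p$-rank of $S$ equals $r\ge 2$, being the rank of the elementary abelian subgroup $E_p$ of Section~\ref{sec:GLback}. Thus the hypotheses of Theorem~\ref{thm:restrSL} are satisfied and the restriction map $T(\SL(n,q))\to T(\SL(n-1,q))$ is injective. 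Since the truncated group $\SL(re+(f-1),q)$ still has residue $f-1\ge 0$ modulo $e$ and (when $f-1\ge 1$) the same argument applies again, iterating $f$ times produces an injection $T(\SL(re+f,q))\hookrightarrow T(\SL(re,q))$; equivalently, running the induction, the step $f\to f-1$ reduces $TT(\SL(re+f,q))=\{0\}$ to the inductive hypothesis $TT(\SL(re+(f-1),q))=\{0\}$, and the final step lands on the base case.

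Finally, a group homomorphism carries torsion to torsion, so the injection $T(\SL(re+f,q))\hookrightarrow T(\SL(re,q))$ restricts to an injection $TT(\SL(re+f,q))\hookrightarrow TT(\SL(re,q))=\{0\}$, whence $TT(\SL(re+f,q))=\{0\}$, as claimed. I do not anticipate any real obstacle: all of the substantive work is already carried out in Theorems~\ref{thm:re} and~\ref{thm:restrSL}. The only items demanding a little care are the bookkeeping that $\SL(n-1,q)$ genuinely contains a Sylow $p$-subgroup at every stage of the induction (this fails precisely at the bottom, $n=re$, which is exactly where Theorem~\ref{thm:re} takes over) and the treatment of the degenerate case $e=1$.
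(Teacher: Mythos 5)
Your proof is correct and follows the same route as the paper's: the paper likewise establishes the base case $f=0$ from Theorem~\ref{thm:re} and then inducts on $f$ using the injectivity from Theorem~\ref{thm:restrSL}. Your extra remark treating the degenerate $e=1$ case via Theorem~\ref{thm:sl-pdiv} is a reasonable clarification that the paper's terse proof omits, but it does not change the substance of the argument.
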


\begin{proof}
According to Theorem \ref{thm:re} we have $TT(\SL(re, q)) = \{0\}$.
Now one can apply induction on $f$ using Theorem \ref{thm:restrSL}
to complete the proof of the corollary.
\end{proof}

%%%%%%%%%%%%%%%%%%%%  section    %%%%%%%%%%%%%%%%%%%%%%%%%%

\section{Proofs of Theorems 1.1 and 1.2}
In this section we prove the theorems 
of the introduction. The proof in the
case that the Sylow $p$-subgroup is cyclic is a direct 
calculation based on known results. The proof in the case that the
Sylow $p$-subgroup is abelian and has $p$-rank at least $2$ is a
compilation of results from prior sections. All of this is extended to
finite groups of Lie type with underlying root system of type $A_{n}$.

\begin{proof}[Proof of Theorem \ref{thm:noncyclic}]
We have proved that $T(SL(n,q)) = \bZ$ whenever the Sylow $p$-subgroup
is abelian and has $p$-rank at least $2$ in Theorems \ref{thm:torfree},
\ref{thm:2e}, \ref{thm:re} and \ref{cor:restrict-re}. 
If $\SL(n,q) \subseteq G \subseteq \GL(n,q)$, then 
a proof that $T(G) \cong \bZ \oplus X(G)$ can be constructed using
Clifford theory. That is, since any indecomposable endotrivial 
$kG$-module, restricts to a sum of conjugate $k\SL(n,q)$-modules, the
restriction must have only one summand and must have dimension
one if the endotrivial module has trivial Sylow restriction.

If $Z \subseteq Z(G)$, then we note that any endotrivial 
$k(G/Z)$-module must inflate to an endotrivial $kG$-module. 
\end{proof}

\begin{proof}[Proof of Theorem \ref{thm:cyclic}]
Observe that if $p$ divides $d$
or if $p = 2$,  then the 
assumption that a Sylow $p$-subgroup of $G$ is cyclic, requires
that  $n=1$. So in this case $G \cong D$ is abelian and the 
structure of $T(G/Z)$ is easily computed. 

Recall from Theorem \ref{thm:cyclic2} that $T(G/Z) \cong
T(\widetilde{N})$ where $\widetilde{N} = N_{G/Z}(\widetilde{S}).$
Here, $\widetilde{S}$ is the unique subgroup of order $p$ in 
a Sylow $p$-subgroup of $G/Z$. In this case, the normalizer of 
$\widetilde{S}$ coincides with the normalizer of $\whs$, the
Sylow $p$-subgroup of $G/Z$. Note that 
$\whs \cong ZS/Z$ and its normalizer is $\whn \cong N/Z$.  
The displayed sequence (\ref{eq:sequence}) follows from the sequence in 
Theorem \ref{thm:cyclic2} and the fact that $X(\whn) 
\cong \whn/[\whn,\whn]$ where $\whs\subseteq[\whn,\whn]$ and
\[
\whn/[\whn, \whn] \ \cong \ 
(N/Z)/[N/Z, N/Z] \ \cong \  (N/Z)/(Z[N,N]/Z) \ \cong 
\ N/(Z[N,N]). 
\]

Now assume that $p>2$ divides $q-1$, but not $d$. Then 
$n = 2$, and $N$ is generated by elements 
\[
x = \begin{bmatrix} 0 & 1 \\ -1& 0 \end{bmatrix}, \quad 
u_a = \begin{bmatrix} a & 0 \\ 0 & 1 \end{bmatrix}, \quad \text{and}
\quad v_b = \begin{bmatrix} b & 0 \\ 0 & b^{-1} \end{bmatrix}
\]
for $a \in D$, and $b \in \bF_q^{\times}$. The Sylow $p$-subgroup
is generated by $u_\zeta$ where $\zeta \in \bF_q^\times$ generates a 
Sylow $p$-subgroup of $\bfq^\times$. 

Let $T$ denote the torus in $\SL(n,q)$ 
that consists of all $v_b$ for $b \in \bfq^\times$. 
It is easily seen that the commutator subgroup $[N,N]$ of
$N$ is generated by all $x^{-1}u_{a^{-1}}xu_a = v_a$ and 
$x^{-1}v_{b^{-1}}xv_b = v_{b^2}$. It follows that $[N, N] =T$ if and 
only if $D$ contains a generator for the Sylow $2$-subgroup
of $\bfq^\times$. This happens if and only if~$2$ does not 
divide $(q-1)/d$.  Otherwise, the index of $[N,N]$ in $T$ is $2$. 

In the first case when $[N,N] = T$, we have that 
$N/[N,N] \cong D \times C_2$ where the factors are generated
by the classes (modulo $[N,N]$) of $u_a$ for $a$ a generator of $D$ and
$x$. Then $T(G) \cong T(N) \cong \bZ/d\bZ \oplus \bZ/4\bZ$ because the element
$x$ acts by inverting the elements of $S$ and hence the class of
$\Omega(k)$ in $T(G)$ has order~$4$.
That is, $\Omega^2(k)$ is a $kG$-module of dimension one on 
which $x$ acts by multiplication by $-1$. 

In the second case, when the index of $[N,N]$ in $T$ is $2$, we have that 
$N/[N,N] \cong D \times C_2 \times C_2$ 
where the factors are generated
by the classes of $u_a$, for a generator $a$ of $D$, $x$, and $v_b$, for
a generator $b$ of the Sylow $2$-subgroup of $T$. Then 
$$
T(G) \cong T(N) \cong \bZ/d\bZ \ \oplus \\bZ/4\bZ \ \oplus \ \bZ/2\bZ,
$$
for the same reason as above. 

We invoke the notation in Section \ref{sec:GLinfo}. Suppose that $e>1$ and assume 
first that $f = 0$. By Lemma \ref{lem:glebasics}, parts (a) and (c), we have
that $C_G(S) = \langle w^m \rangle = G \cap C_\whg(S)$
for $\whg = \GL(e,q)$ and $m= (q-1)/d$. Thus by part (b), $N$ is
generated by $w^m$ and $g$. We can see that $[N,N]$ is generated by
$gw^mg^{-1}w^{-m} = w^{m(q-1)}$. So the index of $[N,N]$ in $C_G(S)$ is
$\ell=\gcd\big(m(q-1),q^e-1\big)/m$. It follows that $N/[N,N] \cong
C_\ell \times C_e$, and that $T(G)$ has the asserted form.  

Finally suppose that $e>1$ and $f > 0$. Let $\theta: \GL(e, q)
\to \SL(n,q) \subseteq G$ be the homomorphism that takes an element $x$ 
to the block matrix with $x$ in the upper left corner. The lower 
right corner of $\theta(x)$ is the diagonal $f \times f$ matrix
with $Det(x)^{-1}$ in the upper left corner, the other diagonal 
entries being equal to 1.  The normalizer $N_G(S)$ consists of 
all elements of the form $\left[ \begin{smallmatrix} A&0\\0&B \
\end{smallmatrix}\right]$ such that $A \in N_{\GL(e,q)}(\langle u \rangle)$,
$B \in \GL(f,q)$ and $\Det(A)\Det(B) \in D = \Det(G)$. 
Then, in almost all cases, we see that
$[N,N]$ is generated by $\theta(w^{q-1})$ and all elements (as above)
with $A = I_e$ and $B \in \SL(f,q)$.
The exception to the above occurs when $f = 2$ and $q=2$, because
in that case the commutator subgroup of $\GL(f,q)$ is not equal
to $\SL(f,q)$, but is a subgroup of index $2$. 
As a consequence we have (in all but the noted exceptional case) that 
$N/[N,N] \cong C_e \times C_{q-1} \times C_d$,
where the first two factors are generated by the classes modulo
$[N,N]$ of $\theta(g)$ and $\theta(w)$. The third factor is generated
by the class of an element given by the diagonal matrix that has 1
in every diagonal entry except the $(e+1)\times(e+1)$ entry which is 
a generator for the group $D \subseteq \bfq^{\times}$.
Then $T(G)$ has the asserted form. 
\end{proof}

\end{document}